\tikzset{
	symbol/.style={
		draw=none,
		every to/.append style={
			edge node={node [sloped, allow upside down, auto=false]{$#1$}}}
	}
}
\newlist{condenum}{enumerate}{1} 
\setlist[condenum]{label=\bfseries C\arabic*., 
	ref=\arabic*, wide}
\begin{document}
	\pdfrender{StrokeColor=black,TextRenderingMode=2,LineWidth=0.2pt}	
	
	\title{Purity and distances between conjugates of elements over henselian valued fields}
	
	\author{Arpan Dutta}
	\address{Department of Mathematics, School of Basic Sciences, IIT Bhubaneswar, Argul,
		Odisha, India, 752050.}
	\email{arpandutta@iitbbs.ac.in}
		\author[Novacoski]{Josnei Novacoski}
\address{Departamento de Matem\'{a}tica,         Universidade Federal de S\~ao Carlos, Rod. Washington Luís, 235, 13565--905, S\~ao Carlos -SP, Brazil}
\email{josnei@ufscar.br}


	\def\NZQ{\mathbb}               
	\def\NN{{\NZQ N}}
	\def\QQ{{\NZQ Q}}
	\def\ZZ{{\NZQ Z}}
	\def\RR{{\NZQ R}}
	\def\CC{{\NZQ C}}
	\def\AA{{\NZQ A}}
	\def\BB{{\NZQ B}}
	\def\PP{{\NZQ P}}
	\def\FF{{\NZQ F}}
	\def\GG{{\NZQ G}}
	\def\HH{{\NZQ H}}
	\def\UU{{\NZQ U}}
	\def\P{\mathcal P}
	
	%
	%
	\let\union=\cup
	\let\sect=\cap
	\let\dirsum=\oplus
	\let\tensor=\otimes
	\let\iso=\cong
	\let\Union=\bigcup
	\let\Sect=\bigcap
	\let\Dirsum=\bigoplus
	\let\Tensor=\bigotimes
	
	\theoremstyle{plain}
	\newtheorem{Theorem}{Theorem}[section]
	\newtheorem{Lemma}[Theorem]{Lemma}
	\newtheorem{Corollary}[Theorem]{Corollary}
	\newtheorem{Proposition}[Theorem]{Proposition}
	\newtheorem{Problem}[Theorem]{}
	\newtheorem{Conjecture}[Theorem]{Conjecture}
	\newtheorem{Question}[Theorem]{Question}
	
	\theoremstyle{definition}
	\newtheorem{Example}[Theorem]{Example}
	\newtheorem{Examples}[Theorem]{Examples}
	\newtheorem{Definition}[Theorem]{Definition}
	
	\theoremstyle{remark}
	\newtheorem{Remark}[Theorem]{Remark}
	\newtheorem{Remarks}[Theorem]{Remarks}

	\newcommand{\n}{\par\noindent}
	\newcommand{\nn}{\par\vskip2pt\noindent}
	\newcommand{\sn}{\par\smallskip\noindent}
	\newcommand{\mn}{\par\medskip\noindent}
	\newcommand{\bn}{\par\bigskip\noindent}
	\newcommand{\pars}{\par\smallskip}
	\newcommand{\parm}{\par\medskip}
	\newcommand{\parb}{\par\bigskip}

	\let\epsilon=\varepsilon
	\let\phi=\varphi
	\let\kappa=\varkappa
	
	\newcommand{\trdeg}{\mbox{\rm trdeg}\,}
	\newcommand{\rr}{\mbox{\rm rat rk}\,}
	\newcommand{\sep}{\mathrm{sep}}
	\newcommand{\ac}{\mathrm{ac}}
	\newcommand{\ins}{\mathrm{ins}}
	\newcommand{\res}{\mathrm{res}}
	\newcommand{\Gal}{\mathrm{Gal}\,}
	\newcommand{\ch}{\operatorname{char}}
	\newcommand{\Aut}{\mathrm{Aut}\,}
	\newcommand{\kras}{\mathrm{kras}\,}
	\newcommand{\dist}{\mathrm{dist}\,}
	\newcommand{\ord}{\mathrm{ord}\,}
	\newcommand{\Div}{\mathrm{Div}\,}
	\newcommand{\Supp}{\mathrm{Supp}\,}
	\newcommand{\Spec}{\mathrm{Spec}\,}
	\newcommand{\height}{\mathrm{ht}\,}
	\newcommand{\rk}{\mathrm{rank}\,}
	\newcommand{\Diff}{\mathrm{Diff}\,}
	\newcommand{\Ram}{\mathrm{Ram}\,}
	\newcommand{\id}{\mathrm{id}\,}
	\newcommand{\lex}{\mathrm{lex}\,}
	\newcommand{\gr}{\mathrm{gr}\,}
	\newcommand{\init}{\mathrm{in}\,}
	\newcommand{\depth}{\mathrm{depth}\,}

	\newtheorem{appendixlemma}{Lemma}[section]
	\newtheorem{appendixthm}[appendixlemma]{Theorem}

	\let\phi=\varphi
	\let\kappa=\varkappa
	
	\def \a {\alpha}
	\def \b {\beta}
	\def \s {\sigma}
	\def \d {\delta}
	\def \g {\gamma}
	\def \o {\omega}
	\def \l {\lambda}
	\def \th {\theta}
	\def \e {$\epsilon$}
	\def \D {\Delta}
	\def \G {\Gamma}
	\def \O {\Omega}
	\def \L {\Lambda}

	\def\op{\operatorname}
	\def\epm{\epsilon_\mu}
	\def\ps{\partial_s}
	\def\kx{K[x]}
	\def\kbx{\overline{K}[x]}
	\def\kb{\overline{K}}
	\def\omu{\overline\mu}
	\def\ggm{\mathcal{G}_\mu}
	\def\inm{\op{in}_\mu}
	\def\sub{\subseteq}
	\def\kpm{\op{KP}(\mu)}

	%
	%
	\textwidth=15cm \textheight=22cm \topmargin=0.5cm
	\oddsidemargin=0.5cm \evensidemargin=0.5cm \pagestyle{plain}


	
	\date{\today}
	
	\maketitle
	
	
	\begin{abstract}
		For a henselian valued field $(K,v)$ and a separable-algebraic element $a\in\overline{K}\setminus K$, we consider the set $S_K(a):= \{ v(a-a^\prime) \mid a^\prime\neq a \text{ is a $K$-conjugate of $a$} \}$. The central aim of this paper is to provide a bound for the cardinality of the set $S_K(a)$, and to characterize the elements $a$ for which this set is a singleton. Connections of this set with the notion of \textit{depth} of $a$ has also been explored. We show that $S_K(a)$ is a singleton whenever $K(a)|K$ is a minimal extension. A stronger version of this result is obtained when $a$ has depth one over $K$. We also provide a host of examples illustrating that the bounds obtained are strict.  
		
		\pars Apart from being of independent interest, another primary motivation for considering this problem comes from the study of ramification ideals. In the depth one case, when $K(a)|K$ is a Galois extension, we obtain intimate connections between the cardinalities of $S_K(a)$ and the number of ramification ideals of the extension $(K(a)|K,v)$. In particular, we show that these cardinalities are same whenever the extension is defectless and non-tame, or whenever $(K,v)$ has rank one. In order to obtain these results, we provide comprehensive descriptions of the ramification ideals of $(K(a)|K,v)$ which extend the known results in this direction.     
	\end{abstract}

	\section{Introduction}
	
	Let $(K,v)$ be a henselian non-trivially valued field and $\overline{K}$ denote a fixed algebraic closure of $K$. We denote the unique extension of $v$ to $\overline{K}$ again by $v$. The value group and residue field of $K$ are denoted respectively by $vK$ and $Kv$. The value and residue of an element $a$ is denoted by $va$ and $av$ respectively. 
	
	\pars Take some $a\in K^\sep \setminus K$, where $K^\sep$ denotes the separable-algebraic closure of $K$. We define 
	\[ S_K(a) := \{ v(a-a^\prime) \mid a^\prime\neq a \text{ is a $K$-conjugate of }a \}.  \] 
	Thus $1\leq \# S_K(a) \leq \deg_K(a)-1$. The primary motivation of this paper is to investigate the following problem proposed by Franz-Viktor Kuhlmann:
	
	\begin{Question}\label{Qn characterize S_K(a)=1}
		Give a characterization of all $a\in K^\sep\setminus K$ such that $\# S_K(a) = 1$.
	\end{Question}
	
	An initial observation is that the value $\#S_K(a)$ is a property of the element $a$ and not of the field extension $K(a)|K$. Indeed, there are field extensions $L|K$ with two distinct generators $a$ and $b$ such that $\#S_K(a) \neq \#S_K(b)$ (cf. Example \ref{Eg S_K(a) neq S_K(b)}). Nevertheless, we do have a complete answer to the problem in the setup of \emph{minimal} extensions. A field extension $\O|K$ is said to be minimal if there are no subextensions $F|K$ with $K\neq F \neq \O$. 
	
	\begin{Theorem}(Theorem \ref{Thm S_K(a)=1 minimal extn})
			Assume that $K(a)|K$ is a minimal extension. Then $\# S_K(a) = 1$.
	\end{Theorem}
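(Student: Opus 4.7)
The plan is to reformulate the problem in Galois-theoretic terms and exploit the group-theoretic translation of ``minimal extension.'' Since $a\in K^{\sep}$, the extension $K(a)|K$ admits a finite Galois closure $N|K$; set $G=\Gal(N|K)$ and $H=\Gal(N|K(a))$. By the Galois correspondence applied to $N|K$, the intermediate fields of $K(a)|K$ are in bijection with the subgroups lying between $H$ and $G$, so minimality of $K(a)|K$ translates precisely to $H$ being a \emph{maximal} subgroup of $G$.

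For each $\gamma\in vN$, I would introduce
\[ M_\gamma \;:=\; \{\sigma\in G \mid v(\sigma(a)-a)\geq \gamma\}. \]
The key technical step is to verify that $M_\gamma$ is a subgroup of $G$ containing $H$. The containment $H\subseteq M_\gamma$ is immediate because elements of $H$ fix $a$. Closure under products uses the identity $\sigma\tau(a)-a = \sigma(\tau(a)-a) + (\sigma(a)-a)$ together with the ultrametric inequality and the fact that every $\sigma\in G$ is an isometry (which follows because $v$ extends uniquely from $K$ to $N$ by henselianity): one gets $v(\sigma\tau(a)-a)\geq \min\{v(\tau(a)-a),\,v(\sigma(a)-a)\}\geq\gamma$. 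Closure under inverses follows from the same isometry property via the one-line computation $v(\sigma^{-1}(a)-a)=v(\sigma(a)-a)$.

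Once $M_\gamma$ is known to be a subgroup between $H$ and $G$, the maximality of $H$ forces $M_\gamma\in\{H,G\}$ for every $\gamma$. To conclude $\#S_K(a)=1$, I argue by contradiction: suppose $\gamma_1<\gamma_2$ both lie in $S_K(a)$, realized respectively by some $\sigma_1,\sigma_2\in G\setminus H$. Then $\sigma_1\notin M_{\gamma_2}$ (since $v(\sigma_1(a)-a)=\gamma_1<\gamma_2$), forcing $M_{\gamma_2}\neq G$ and hence $M_{\gamma_2}=H$; but $\sigma_2\in M_{\gamma_2}\setminus H$, a contradiction. The main obstacle, such as it is, is conceptual rather than technical --- one must isolate the right invariant, namely the sub\emph{group} $M_\gamma$, so that the implication ``minimal extension $\Longrightarrow$ maximal subgroup'' can be brought to bear; once this is spotted, no information about $vK$, $Kv$, or the characteristic enters the argument, which suggests this is the ``right'' proof.
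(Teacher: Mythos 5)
Your proof is correct, and it takes a genuinely different --- and considerably more elementary --- route than the paper's. The paper splits into cases according to whether $L := K(a)\cap K^r$ equals $K(a)$ (tame case) or $K$ (purely wild case), which minimality forces; in the tame case it invokes Theorem \ref{Thm tame S_K(a)} and the chain of fields produced by a complete distinguished chain, and in the purely wild case it brings in the full machinery of minimal pairs of definition, implicit constant fields, and the $j$-invariant via Theorems \ref{Thm j = [K(a):ICF] min pair of defn} and \ref{Thm j = [K(a):ICF] key pol}, together with the degree computation from Proposition \ref{Prop S_K(a) = 1 necessary condns}. Your argument sidesteps all of this: it works entirely inside the Galois group of the normal closure $N|K$, introduces the ramification-type filtration $M_\gamma=\{\sigma\in G\mid v(\sigma a - a)\geq\gamma\}$, verifies it consists of subgroups between $H=\Gal(N|K(a))$ and $G$ (using only the ultrametric inequality and the fact that $G$ acts by $v$-isometries, a consequence of henselianity), and then applies maximality of $H$. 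This is shorter, self-contained, and isolates exactly which hypothesis does the work. The trade-off is that the paper's heavier machinery is not single-purpose --- it is the same apparatus that delivers the quantitative bounds of Theorems \ref{Thm S_K(a) bound a pure} and \ref{Thm S_K(a) bound general} for non-minimal extensions, so the authors get Theorem \ref{Thm S_K(a)=1 minimal extn} essentially for free from a framework they need anyway, whereas your filtration $M_\gamma$, while cleaner here, does not by itself produce those finer estimates.
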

	In particular, this shows that $\#S_K(a) = 1$ whenever $K(a)|K$ is of prime degree, a result which generalizes the known result for Artin-Schreier or Kummer generators. 
	
	Since $\#S_K(a)$ depends on the element $a$ rather than the field generated by it, we take a closer look at the properties of the element. Associated to each element is a positive integer called its \textit{depth} (cf. Section \ref{Sect Prelims} for the definition), denoted by $\ell(a)$. This integer appears in various forms throughout the literature; as lengths of complete distinguished chains (provided such a chain exists), lengths of Okutsu sequences, and as lengths of Mac Lane-Vaqui\'{e} chains. An element $a$ is said to be \textit{pure over $K$} whenever $\ell(a)=1$. In this direction, Kuhlmann proposed the following problem:
	
	\begin{Question}\label{Qn pure implies S_K(a)=1}
		Assume that $a$ is pure over $K$. Do we have $\# S_K(a) = 1$? 
	\end{Question}
	 
	 The answer to Question \ref{Qn pure implies S_K(a)=1} is affirmative whenever $(K(a)|K,v)$ is a tame extension (cf. Definition \ref{Defn tame} for the definition of tame extensions, and Theorem \ref{Thm tame S_K(a)} for a proof of the assertion), but is negative in the general setting (cf. Example \ref{Eg bound sharp a pure}). In the case of pure elements, a bound for the cardinality of the set $S_K(a)$ is provided by the next theorem: 
	 
	 \begin{Theorem}(Theorem \ref{Thm S_K(a) bound a pure})\label{Thm Intro S_K(a) bound a pure}
	 	Assume that $a$ is pure over $K$ and that $(K(a)|K,v)$ is not a tame extension. Then,
	 	\[ \#S_K(a) \leq \begin{cases*}
	 		v_p \deg_L(a) +1 \text{ whenever }\max v(a-K) \in S_K(a),\\
	 		v_p \deg_L(a) \text{ otherwise},
	 	\end{cases*}  \]
	 	where $v(a-K):= \{v(a-z)\mid z\in K\}$, $p$ is the characteristic exponent, $v_p$ denotes the $p$-adic valuation, $L:= K(a)\sect K^r$ and $K^r$ is the absolute ramification field of $(K,v)$.
	 \end{Theorem}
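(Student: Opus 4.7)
The plan is to reduce via the intermediate tame field $L = K(a) \cap K^r$. First, I would establish that $[K(a):L] = p^n$ is a $p$-power via the standard Galois-closure argument: $\Gal(K^\sep|K^r)$ is pro-$p$, so if $N$ is the Galois closure of $K(a)|K$ and $M = N \cap K^r$, then $[N:M]$ is a $p$-power; since $M|L$ is Galois, $K(a)$ and $M$ are linearly disjoint over $L = K(a) \cap M$, giving $[K(a):L] = [K(a) \cdot M : M]$ which divides $[N:M]$. The non-tameness hypothesis forces $n = v_p[K(a):L] \geq 1$.

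Next, I would split the nontrivial $K$-conjugates of $a$ into two classes and count their contributions to $S_K(a)$ separately. The $L$-conjugates contribute the subset $S_L(a) \subseteq S_K(a)$, which I plan to show satisfies $\#S_L(a) \leq n$. The non-$L$-conjugates, arising from $\s \in \Gal(K^\sep|K)$ with $\s|_L \neq \id$, contribute at most one new value to $S_K(a) \setminus S_L(a)$, and that value, when it appears, coincides with $\max v(a-K)$.

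For the non-$L$-conjugate contribution, I would exploit purity of $a$ together with tameness of $L|K$ to locate a best approximation $z \in L$ with $v(a-z) = \max v(a-K)$. The ultrametric decomposition $a - \s(a) = (a-z) - (\s(a) - \s(z)) + (z - \s(z))$, combined with $v(a - z) = v(\s(a) - \s(z))$ and a tameness-driven analysis of $v(z - \s(z))$, confines $v(a - \s(a))$ either to $S_L(a)$ or to the singleton $\{\max v(a-K)\}$. This yields the dichotomy in the theorem: $\#S_K(a) \leq n+1$ when $\max v(a-K) \in S_K(a)$, and $\#S_K(a) = \#S_L(a) \leq n$ otherwise.

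The principal obstacle is the bound $\#S_L(a) \leq n$ for pure elements in the purely wild setting $[K(a):L] = p^n$. I would proceed by induction on $n$: since purity persists under the tame base change to $L$, the minimal polynomial of $a$ over $L$ arises from a single Mac Lane-Vaqui\'{e} augmentation, and each new distinct distance in $S_L(a)$ forces a refinement of the underlying key polynomial whose relative degree drops by a factor of $p$, so the induction terminates after at most $n$ rounds. A secondary obstacle is the separation statement for non-$L$-conjugates, which requires the tame Galois action on $L$ not to produce ``interfering'' distances outside $S_L(a) \cup \{\max v(a-K)\}$; this should follow from a careful use of the specific structure of best approximations inside tame extensions together with the ultrametric estimate above.
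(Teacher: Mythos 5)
Your strategy is genuinely different from the paper's. The paper does not split $K$-conjugates into $L$-conjugates and non-$L$-conjugates. Instead, it lists $S_K(a) = \{\gamma, \gamma_1, \dotsc, \gamma_m\}$ in decreasing order, associates to each $\gamma_i$ the valuation $w_i = v_{a,\gamma_i}$ on $K(X)$, and uses the $j$-invariant together with Theorem \ref{Thm j = [K(a):ICF] min pair of defn} to show that the implicit constant fields $IC_K(w_i)$ form a strictly nested chain between $L$ and a proper subfield of $K(a)$. The bound then follows from the fact that $K(a)|L$ is a $p$-power extension. There is no separate counting of $L$-conjugates versus non-$L$-conjugates.

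As written, your proposal has genuine gaps. First, the claim $\#S_L(a) \leq n$ (where $p^n = [K(a):L]$) is not established. If you ran the theorem's own argument over the base field $L$ (note $K(a)\cap L^r = L$ since $L^r = K^r$), you would only get $\#S_L(a) \leq n+1$ in the case $\max v(a-L) \in S_L(a)$; ruling out the $+1$ here requires an argument you have not supplied. Your inductive sketch — that each new distance ``forces a refinement of the underlying key polynomial whose relative degree drops by a factor of $p$'' — is not a statement that follows from the Mac Lane--Vaqui\'e formalism for a pure element: for a pure $a$ the entire chain of truncations lives at a single degree, and the drop happens in the index $[K(a):IC_K(w_i)]$, not in the degree of a key polynomial. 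Making this precise is essentially the content of the paper's chain (\ref{Eqn 7}), which you would be rebuilding, not bypassing.

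Second, the claim that the non-$L$-conjugates contribute at most one new value, necessarily equal to $\max v(a-K)$, is the crux of your approach and is not proved. Your ultrametric decomposition $a-\sigma a = (a-z) - \sigma(a-z) + (z-\sigma z)$ with $z \in L$ gives $v(a-z) = v(\sigma(a-z))$, but this alone does not confine $v(a-\sigma a)$: when $v(z-\sigma z) \geq v(a-z)$ (which does happen, since $v(a-z) \leq v(\tau a - a)$ for all $\tau$ and all $z\in K$, and cancellation can occur) one cannot conclude where $v(a-\sigma a)$ lands without a much finer analysis. Moreover, the tame extension $L|K$ can itself have depth $>1$, in which case $\{v(z-\sigma z) : \sigma|_L\neq\id\}$ is not a singleton; your ``tameness-driven analysis'' would need to explain why these several values do not propagate to several new distances in $S_K(a)$.

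Third, and most concretely, your argument breaks down entirely in the defect case. If $d(K(a)|K,v) > 1$ — which is allowed by the hypotheses and is in fact the harder case — then by Remark \ref{Rmk characterization pure elts}(ii) the set $v(a-K)$ has no maximal element, so ``locate a best approximation $z \in L$ with $v(a-z) = \max v(a-K)$'' is vacuous. Your entire treatment of the non-$L$-conjugates rests on this nonexistent best approximation. The paper's argument sidesteps this because the chain of implicit constant fields is built from minimal pairs of definition $(a,\gamma_i)$, which exist regardless of whether $v(a-K)$ has a maximum; the only place $\max v(a-K)$ enters is the mild case distinction on whether $\gamma_m$ equals it.
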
 
     As an immediate corollary, we obtain a sufficient \textit{ramification theoretic} condition which guarantees an affirmative answer to Question \ref{Qn pure implies S_K(a)=1}:
     \begin{Corollary}(Corollary \ref{Coro S_K(a)=1 a pure K(a)|L minimal})\label{Coro Intro}
     	Let notations and assumptions be as in Theorem \ref{Thm Intro S_K(a) bound a pure}. Moreover, assume that $K(a)|L$ is a minimal extension. Then,
     		\[ \#S_K(a) = \begin{cases*}
     		2 \text{ whenever } \max v(a-K) \in S_K(a),\\
     		1 \text{ otherwise}.
     	\end{cases*}  \]
     \end{Corollary}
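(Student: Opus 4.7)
The plan is to combine Theorem \ref{Thm Intro S_K(a) bound a pure} with the observation that the minimality hypothesis forces $v_p\deg_L(a)=1$, after which the stated bounds collapse to the upper bound of the theorem.

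First I would observe that $L$ is by construction the maximal subextension of $K(a)|K$ that is tame over $K$, and since $(K(a)|K,v)$ is not tame by hypothesis we have $L\subsetneq K(a)$. The extension $K(a)|L$ is therefore purely wild, and in the henselian setting this forces $\deg_L(a)=p^k$ for some $k\geq 1$. Next, I would show that $k=1$: if $k\geq 2$, passing to the Galois closure $N$ of $K(a)$ over $L$ and then to the compositum $NK^r$, one obtains a Galois extension of $K^r$ of $p$-power degree at least $p^2$ inside the pro-$p$ extension $K^\sep|K^r$. Classical $p$-group theory supplies a normal subgroup of index $p$, whose fixed field, intersected with $K(a)$, yields (via $L=K(a)\cap K^r$ and the Galois correspondence) a proper intermediate field strictly between $L$ and $K(a)$. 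This contradicts minimality, so $\deg_L(a)=p$ and $v_p\deg_L(a)=1$.

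Plugging $v_p\deg_L(a)=1$ into Theorem \ref{Thm Intro S_K(a) bound a pure} gives $\#S_K(a)\leq 2$ when $\max v(a-K)\in S_K(a)$ and $\#S_K(a)\leq 1$ otherwise. In the second case equality with $1$ is immediate from $\#S_K(a)\geq 1$. In the first case I must rule out $\#S_K(a)=1$: if all non-trivial conjugate distances shared a common value, this value would have to equal $\max v(a-K)$, and I would use the purity of $a$ together with the purely-wild degree-$p$ structure of $K(a)|L$ to show that such an equidistant configuration of the $p$ $L$-conjugates of $a$ would force $K(a)|L$ to be tame, contradicting the non-tameness hypothesis.

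The main obstacle is the last step, namely ruling out $\#S_K(a)=1$ when $\max v(a-K)\in S_K(a)$: it is a rigidity statement coupling the valuative geometry of the conjugate distances to the purity (depth one) and wildness (non-tame) assumptions. By contrast, the descent-to-an-intermediate-field step in the minimality argument, while technical, follows a standard $p$-group pattern and should be routine once the Galois closure of $K(a)K^r|K^r$ is set up carefully.
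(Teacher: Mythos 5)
Your plan hinges on the claim that minimality of $K(a)|L$ forces $\deg_L(a)=p$, and that claim is neither proved by your sketch nor true in general. In the $p$-group step you produce a field $F$ with $K^r\subsetneq F\subsetneq NK^r$, but there is no Galois correspondence that guarantees $L\subsetneq F\cap K(a)$: since $K(a)K^r|K^r$ need not be Galois, the fixed field of a normal index-$p$ subgroup of $\Gal(NK^r|K^r)$ can very well intersect $K(a)$ only in $L$. In fact a minimal purely wild extension of degree $p^2$ can exist. Take $K=L$ a henselian field of residue characteristic $2$ (e.g.\ $\mathbb{Q}_2$) admitting a Galois extension $N|L$ with $\Gal(N|L)\cong A_4$ whose wild ramification group is $V_4$; then the fixed field $E$ of a Sylow $3$-subgroup has $[E:L]=4$, is purely wild (since $E\cap K^r=L$), and $E|L$ is minimal because a Sylow $3$-subgroup is maximal in $A_4$. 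So $v_p\deg_L(a)=2$, and plugging that into Theorem \ref{Thm Intro S_K(a) bound a pure} gives a bound of $3$ rather than $2$, which is not what you need.

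The paper avoids this issue entirely. Its proof of Corollary \ref{Coro S_K(a)=1 a pure K(a)|L minimal} does not use the crude bound $v_p\deg_L(a)+1$ from the theorem statement at all. Instead it re-uses the finer estimate from inside the proof of Theorem \ref{Thm S_K(a) bound a pure}, namely $\#S_K(a)\leq \ell(IC_K(w)|L)+2$ (strict when $v(a-K)<\min S_K(a)$), together with the sandwich $L\subseteq IC_K(w)\subsetneq K(a)$ (the right inclusion coming from $j(Q)>1$). Minimality of $K(a)|L$ then forces $IC_K(w)=L$, so $\ell(IC_K(w)|L)=0$ and $\#S_K(a)\leq 2$, without ever needing to control $\deg_L(a)$. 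For the final step you are over-engineering: if $\max v(a-K)=\min S_K(a)$ and $\#S_K(a)=1$ then $\max v(a-K)=\omega_K(a)$, and Lemma \ref{Lemma va = kras(a) tame} immediately gives that $(K(a)|K,v)$ is tame, a contradiction; no analysis of an equidistant configuration of the $L$-conjugates is required. In short, the route you propose has a genuine gap in the first step, and even if it could be patched it would be a detour compared to re-using the nested chain of implicit constant fields already built in the proof of the theorem.
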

     The bounds obtained in Theorem \ref{Thm Intro S_K(a) bound a pure} and Corollary \ref{Coro Intro} are sharp (cf. Example \ref{Eg bound sharp a pure}). However, the inequality in the statement of Theorem \ref{Thm Intro S_K(a) bound a pure} can be strict, as observed in Example \ref{Eg strict inequality in Thm S_K(a) bound pure}. This inequality is obtained in the following way: we write $S_K(a) = \{ \g_0, \g_1, \dotsc , \g_m \}$ as a strictly decreasing sequence. To each $\g_i$, we associate an extension $w_i$ of $v$ to $K(X)$. We then construct a nested tower of field extensions 
     \[ L\subseteq L_{m-1} \subsetneq \dotsc \subsetneq L_0 \subsetneq K(a),   \]
     from which we obtain the desired bound. These fields appear as the implicit constant fields (introduced by Kuhlmann in [\ref{Kuh value groups residue fields rational fn fields}]) of the extensions $(K(X)|K,w_i)$. Employing the notion of the $j$-invariant (introduced by the first author in [\ref{Dutta min fields implicit const fields}] to study implicit constant fields), we exhibit that they form a nested chain. We refer the reader to Section \ref{Sect Prelims} for the relevant definitions and properties. 
     
	\pars In case of elements of higher depth, we have a generalization of Theorem \ref{Thm Intro S_K(a) bound a pure} (cf. Theorem \ref{Thm S_K(a) bound general}), under the assumption that $(K(a)|K,v)$ is a \textit{defectless} extension, that is when 
	\[ (vK(a):vK)[K(a)v:Kv] = \deg_K(a).  \]
	We are not aware of an extension of the result to the defect case. 
	
	\pars The primary motivation for considering Questions \ref{Qn characterize S_K(a)=1} and \ref{Qn pure implies S_K(a)=1} comes from the study of \textit{ramification ideals}. We refer the reader to Section \ref{Sect Ramification ideals} for the relevant definitions and properties, and to [\ref{Cutkosky-FVK-AR Computation of Kahler differentials ind defect}], [\ref{FVK AR Valn theory deeply ramified}] and [\ref{FVK AR Topics in higher ramification theory}] for motivations and background to studying these objects. For a given Galois extension $\mathcal{E}:= (\O|K,v)$, we denote by $\Ram(\mathcal{E})$ the set of ramification ideals of this extension. This set is non-empty only when $\mathcal{E}$ is not tame. Moreover, the set $\Ram(\mathcal{E})$ depends only on the extension $\mathcal{E}$ and is independent of the choice of the generator. We define the notion of \textit{depth of an extension} as follows: the depth of an extension $\mathcal{E} = (\O|K,v)$ is defined as
	\[ \depth (\mathcal{E}) := \min \{ \ell(a) \mid a \text{ is a generator of } \O|K \}. \]  
	The following theorem, proved over the course of Theorem \ref{Thm Ram(E) = S_K(a)}, Theorem \ref{Thm Ram(E) leq S_K(a) defect} and Theorem \ref{Thm Ram(E) = S_K(a) defect rank one}, provides an intimate connection of the set $S_K(a)$ with $\Ram(\mathcal{E})$:
	
	\begin{Theorem}
		Let $\mathcal{E}:= (\O|K,v)$ be a Galois extension of henselian valued fields. Assume that $\depth(\mathcal{E}) = 1$. Further assume that $\mathcal{E}$ is not tame. Then for any generator $a$ of $\O|K$ such that $a$ is pure over $K$, we have that
		\[ \#\Ram(\mathcal{E}) \leq \# S_L(a), \text{ where } L:= \O\sect K^r.  \]
		In particular, if $\mathcal{E}$ is purely wild, then we have that
		\[  \#\Ram(\mathcal{E}) \leq \# S_K(a).  \]
		Equality holds whenever $\mathcal{E}$ is defectless or when $\rk (K,v) = 1$.
	\end{Theorem}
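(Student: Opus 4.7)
The plan is to mirror the paper's own decomposition into Theorems \ref{Thm Ram(E) = S_K(a)}, \ref{Thm Ram(E) leq S_K(a) defect} and \ref{Thm Ram(E) = S_K(a) defect rank one}, treating the purely wild statement as the core of the argument and deducing the general inequality from it by a tame reduction.

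First I would reduce the inequality $\#\Ram(\mathcal{E}) \leq \#S_L(a)$ to the purely wild case $\#\Ram(\O|L,v)\leq \#S_L(a)$. Since $L=\O\sect K^r$ and $K^r|K$ is the absolute ramification field, $(L|K,v)$ is tame, so it contributes no ramification ideals; the plan is to show that the ramification ideals of $\mathcal{E}$ are exactly those of $(\O|L,v)$, and that $a$ remains pure (depth one) over $L$ (the latter is needed so that the subsequent analysis of the filtration through $a$ still applies). Once this reduction is in place, the "in particular" clause follows immediately because purely wild means $L=K$ and $S_L(a)=S_K(a)$.

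For the main inequality in the purely wild setting, I would pass through the ramification filtration $\{G_\gamma\}$ on $G=\Gal(\O|L)$. For every nontrivial $\sigma\in G$ the value $v(\sigma a-a)$ belongs to $S_L(a)$ (any conjugate of $a$ over $L$ is of the form $\sigma a$ with $\sigma\neq\id$, and purity ensures the orbit of $a$ already produces the full set $S_L(a)$). The jumps of $\{G_\gamma\}$, which index $\Ram(\O|L,v)$, therefore occur at a subset of $S_L(a)$. This yields an injective assignment from ramification ideals to elements of $S_L(a)$ and hence the bound. The purity hypothesis is essential here: for depth-one elements, Theorem \ref{Thm Intro S_K(a) bound a pure} together with the description of the associated implicit constant field controls the values $v(\sigma a-a)$ tightly enough that the injection is well-defined.

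The equality in the defectless case is then obtained by constructing the reverse assignment. Under defectlessness the fundamental equality $(vK(a):vK)[K(a)v:Kv]=\deg_K(a)$ rules out any cancellation: for each $\gamma\in S_L(a)$ the set $\{\sigma\in G:v(\sigma a-a)\geq\gamma\}$ is strictly larger than $\{\sigma:v(\sigma a-a)>\gamma\}$, so $\gamma$ genuinely appears as a jump, producing a ramification ideal. In the rank-one defect case the same direct surjectivity can fail, and this is where I expect the main difficulty to lie. The plan is to pass to the completion and use a Krasner/approximation argument: in rank one, every $\gamma\in S_L(a)$ can be realized as a value $v(\sigma a'-a')$ for an approximant $a'$ that generates the same Galois extension, and the continuity of the ramification filtration in the rank-one topology allows one to transfer the jump back to $a$ itself. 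The hardest step is controlling how the defect interacts with the filtration, ensuring that the approximation genuinely recovers a missing jump rather than producing an artificial one; this is where the rank-one hypothesis is used critically and where I expect the argument not to extend to higher rank.
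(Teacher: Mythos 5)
Your high-level decomposition — reduce to the purely wild case over $L$ via a tame reduction, then establish the inequality and handle the defectless and rank-one defect equalities separately — mirrors the paper's organization into Corollary \ref{Coro Ram(E) = S_L(a)} and Theorems \ref{Thm Ram(E) = S_K(a)}, \ref{Thm Ram(E) leq S_K(a) defect}, \ref{Thm Ram(E) = S_K(a) defect rank one}. But each of the three core steps is carried out by a mechanism the proposal does not supply, and in the defect equality your proposed route diverges from the actual proof in a way I don't believe works.

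The essential missing ingredient throughout is an explicit computation of $\mathcal{I}_H$ for a pure generator $a$, showing that $\mathcal{I}_H$ is generated by the single ``minimal'' datum of $H$. In the defectless case this is Lemma \ref{Lemma I_H generating set}: after normalizing so that $(a,0)$ is a distinguished pair, the basis $\{1,a,\dots,a^{n-1}\}$ is a valuation basis (Proposition \ref{Prop depth one valn basis}), which forces $\mathcal{I}_H=\bigl(\tfrac{\sigma a}{a}-1\mid\sigma\in H\bigr)$ and then, via Lemmas \ref{Lemma v(x^i-1) geq v(x-1)}--\ref{Lemma v(sigma^i a - a) geq v(sigma a - a)}, $\mathcal{I}_H=\bigl(\tfrac{\sigma_H a}{a}-1\bigr)$ where $\sigma_H$ realizes $\min_{\sigma\in H}v(\sigma a - a)$. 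Distinct minimum values yield principal ideals with generators of distinct value, hence distinct ideals; the defect-free equality $\#\Ram(\mathcal E)=\#S_K(a)$ falls out. In the defect case (where $L=K$ automatically, since a pure element with defect generates an immediate, hence purely wild, extension), the analogous computation is Theorem \ref{Thm I_H defect depth one}, which uses the pcs of Remark \ref{Rmk characterization pure elts} and Hasse--Schmidt derivatives to show $\mathcal{I}_H=\bigl(\tfrac{\sigma a-a}{a-z_\nu}\mid\sigma\in H,\nu<\lambda\bigr)$. Your proposal invokes ``the ramification filtration $\{G_\gamma\}$'' and asserts that its jumps ``index $\Ram(\O|L,v)$,'' but this is exactly what needs to be proved: a priori, two distinct subgroups $H_\gamma\supsetneq H_{\gamma'}$ in your filtration could have $\mathcal{I}_{H_\gamma}=\mathcal{I}_{H_{\gamma'}}$, and indeed the paper's Theorem \ref{Thm Ram(E) leq S_K(a) defect} is only an inequality precisely because this collision is not ruled out in general. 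Observing that $\{\sigma:v(\sigma a-a)\geq\gamma\}$ strictly contains $\{\sigma:v(\sigma a-a)>\gamma\}$ for $\gamma\in S_L(a)$ does not by itself produce a new ramification ideal.

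The rank-one defect equality is where your sketch departs most. The paper argues by contradiction using the explicit generators from Theorem \ref{Thm I_H defect depth one}: if $\mathcal{I}_{H_i}=\mathcal{I}_{H_j}$ with $v(\sigma_i a-a)<v(\sigma_j a-a)$, one extracts arbitrarily large gaps in the values $v(a-z_\nu)$, and the Archimedean property of a rank-one group then forces $v(a-K)=vK$, placing $a$ in the completion $\widehat{K}$; but a henselian rank-one field is separable-algebraically closed in its completion, so $a\in K$, a contradiction. Your proposed ``Krasner/approximation'' route — realizing $\gamma\in S_L(a)$ as $v(\sigma a'-a')$ for some other generator $a'$ — is not sound as stated, because $S_L(a)$ is a property of the element and not of the extension (Example \ref{Eg S_K(a) neq S_K(b)} shows two generators of the same extension can have different cardinalities), so approximating $a$ by a different generator can change the invariant you are trying to count.
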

	
	The problem remains open whether the equality holds true for every pure generator of a depth one extension with defect for valued fields of higher rank. Moreover, even in the defectless setup, the problem remains open whether the assertions can be extended to Galois extensions of higher depth.


	\section{Preliminaries}\label{Sect Prelims}
	
	\subsection{The defect} The Lemma of Ostrowski states that for a finite extension $(L|K,v)$, we always have that
	\[  [L:K] = (vL:vK)[Lv:Kv]p^n,   \]
	where $p$ is the \textbf{characteristic exponent}, that is, $p = 1$ if $\ch Kv = 0$ and $p=\ch Kv$ otherwise. The factor $p^n$ is said to be the defect of the extension $(L|K,v)$ and is denoted by $d(L|K,v)$. We say that $(L|K,v)$ is \textbf{defectless} if $d(L|K,v) = 1$. We refer the reader to [\ref{Kuh defect}] for an extensive treatment on the defect. 
	

	\subsection{Complete distinguished chains} A characterization of defectless simple extensions over \textit{henselian} valued fields has been provided in [\ref{KA SKK chains associated with elts henselian}] via the notion of complete distinguished chains. A pair $(b,a) \in \overline{K}\times\overline{K}$ is said to form a \textbf{distinguished pair over $K$} if the following conditions are satisfied:
	\sn (DP1) $\deg_K(b) > \deg_K(a)$,
	\n (DP2) $\deg_K(z) < \deg_K(b) \Longrightarrow v(b-a)\geq v(b-z)$, 
	\n (DP3) $v(b-a) = v(b-z)\Longrightarrow \deg_K(z) \geq \deg_K(a)$. \\
	In other words, $a$ is closest to $b$ among all the elements $z$ satisfying $\deg_K(z) < \deg_K(b)$; furthermore, $a$ has minimum degree among all such elements which are closest to $b$. In this case we define $\d_K(b):= v(b-a)$. Equivalently,
	\[ \d_K(b):= \max \{ v(b-z) \mid \deg_K(z) < \deg_K(b) \}.  \]
	An element $a\in \overline{K}$ is said to admit a \textbf{complete distinguished chain over $K$} if there is a chain $a_0(= a), a_1, \dotsc , a_n$ of elements in $\overline{K}$ such that $(a_i, a_{i+1})$ is a distinguished pair over $K$ for all $i$, and $a_n \in K$. Observe that, 
	\[ \d_K(a) > \d_K(a_1) > \dotsc > \d_K(a_{n-1}) = v(a_{n-1} - a_n).  \]
	In the setup of henselian valued fields, the existence of complete distinguished chains is provided by [\ref{KA SKK chains associated with elts henselian}, Theorem 1.2]:
	\begin{Theorem}\label{Thm cdc Khanduja-Aghigh}
		Let $(K,v)$ be a henselian valued field. Then an element $a\in \overline{K}$ admits a complete distinguished chain over $K$ if and only if $(K(a)|K,v)$ is a defectless extension. 
	\end{Theorem}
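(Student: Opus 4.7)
My plan is to prove the two implications separately, with induction on chain length driving both directions, and to use multiplicativity of the defect in henselian towers as the main algebraic tool.

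For the forward direction (existence of a complete distinguished chain implies defectlessness), I would induct on the length $n$ of the chain $a_0 = a, a_1, \ldots, a_n$. The base case $n = 0$ is trivial since $a \in K$. For the inductive step, the tail $a_1, \ldots, a_n$ is itself a complete distinguished chain of shorter length, so by induction $(K(a_1)|K,v)$ is defectless. The heart of the argument is to show that $(K(a)|K(a_1),v)$ is also defectless, after which the multiplicativity of the defect in finite extensions of henselian valued fields gives $d(K(a)|K,v) = d(K(a)|K(a_1),v)\cdot d(K(a_1)|K,v) = 1$. To establish the intermediate defectlessness, I would exploit the fact that $(a,a_1)$ being a distinguished pair forces $v(a-a_1) = \delta_K(a)$ to be strictly larger than $v(a - a')$ for any $K$-conjugate $a' \neq a$ of $a_1$ (by the minimal-degree condition (DP3), combined with henselianness); evaluating the minimal polynomial of $a_1$ over $K$ at $a$ then gives a precise value $v(f(a)) = \deg_K(a_1)\cdot \delta_K(a) - C$ for a controlled correction $C$, and from this one reads off that no defect can be hidden in $K(a)|K(a_1)$.

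For the reverse direction, assume $(K(a)|K,v)$ is defectless. The central fact I would invoke is that defectlessness guarantees the supremum
\[ \delta_K(a) = \sup\{v(a-z) \mid \deg_K(z) < \deg_K(a)\} \]
is attained, i.e.\ is actually a maximum in $v\overline{K}$; without defect one typically only has suprema that are not reached. Once this is known, I choose $a_1 \in \overline{K}$ with $v(a-a_1) = \delta_K(a)$ and $\deg_K(a_1)$ minimal subject to this; conditions (DP1)--(DP3) are then immediate from the choice. The next step is to show $(K(a_1)|K,v)$ is again defectless, after which induction on $\deg_K(a_1) < \deg_K(a)$ produces a complete distinguished chain for $a_1$, which one prepends with $a$ to finish. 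To propagate defectlessness to $K(a_1)|K$, I would again use the multiplicativity of the defect: one shows (via the same evaluation-of-minimal-polynomial computation that powered the forward direction) that the relative extension $K(a)|K(a_1)$ is defectless, and hence $d(K(a_1)|K,v) \leq d(K(a)|K,v) = 1$.

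The main obstacle I anticipate is verifying the attainment of $\delta_K(a)$ from defectlessness, together with the computation of $v(f(a))$ where $f$ is the minimal polynomial of $a_1$ over $K$. This computation requires carefully distinguishing between the distance $v(a-a_1)$ and the distances $v(a - a_1')$ to the other conjugates of $a_1$; it is here that the minimal-degree clause (DP3) and the henselian hypothesis interact most subtly, since henselianness is what ensures that conjugates of $a_1$ of the same degree behave symmetrically with respect to distances from $a$. Everything else in the argument is bookkeeping around induction and the multiplicative defect formula.
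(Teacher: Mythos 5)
The paper does not prove this theorem; it is quoted verbatim from Aghigh--Khanduja (the paper's reference for ``chains associated with elements algebraic over a henselian valued field'', Theorem~1.2), so there is no in-paper proof to compare against. On its own merits, your sketch has two genuine gaps.

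In the forward direction you pass to a tower $K \subseteq K(a_1) \subseteq K(a)$, but a distinguished pair $(a,a_1)$ carries no containment relation between $K(a)$ and $K(a_1)$: conditions (DP1)--(DP3) constrain only degrees and distances, so in general $a_1 \notin K(a)$ and multiplicativity of the defect in a tower does not apply directly. The fact that actually does the work here --- and which the paper explicitly invokes later in the proof of Proposition~\ref{Prop S_K(a) = 1 necessary condns} --- is Aghigh--Khanduja's identity $d(K(a)|K,v) = d(K(a_1)|K,v)$ for a distinguished pair, which is \emph{not} a tower computation. Moreover, your auxiliary claim that the distance $v(a-a_1)=\d_K(a)$ is \emph{strictly} larger than $v(a-a_1')$ for every other $K$-conjugate $a_1'$ of $a_1$ is false: (DP2) gives $v(a-a_1') \leq v(a-a_1)$, and (DP3) only excludes equality with elements of degree strictly less than $\deg_K(a_1)$; the conjugates $a_1'$ have the same degree, so equality is permitted, which breaks the $v(f(a))$ computation you want to rely on.

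In the reverse direction, the entire content of the theorem is the claim that defectlessness forces the supremum $\d_K(a)$ to be attained (once it is, the remaining steps are indeed routine induction). You flag this yourself as ``the main obstacle,'' but you do not give an argument for it. This is not a bookkeeping point: it is exactly where pseudo-convergent sequences and the defect enter (cf.\ Remark~\ref{Rmk characterization pure elts}(ii), which records that in the defect case $v(a-K)$ has no maximal element), and it is the step that the cited Aghigh--Khanduja proof spends its effort on. As written, the proposal does not establish the theorem.
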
 
A generalization of this theorem in the setup of \textit{unibranched} simple defectless extensions over \textit{arbitrary} valued fields has been provided in [\ref{Dutta Ghosh defectless unibranched extn complete distinguished chains}, Theorem 1.2].

	
	\subsection{Okutsu sequences} If $(K(a)|K,v)$ is an arbitrary (not necessarily defectless) extension of henselian valued fields, then we can replace the notion of complete distinguished chains by that of \textit{Okutsu sequences}. We first introduce a couple of notations. 
	
	\pars If $\G$ is a subset of $vK$ and $\g\in vK$, then we say $\g \geq \G$ if $\g \geq \a$ for all $\a\in\G$. Thus either $\g = \max \G$, or else $\g > \G$. 
 	
 	\pars If $A \subseteq \overline{K}$, we say that $A$ has a \textit{common degree} if all its elements have the same degree over $K$. In this case, we denote the common degree by $\deg_K(A)$. 
 	
 	\pars For any positive integer $d < \deg_K(a)$, we define 
 	\[ D_d  = D_d(a,K):= \{ v(z-a) \mid z\in \overline{K}, \, \deg_K(z) = d \}.    \]
 	The set $D_1(a,K)$ is commonly expressed as $v(a-K)$ in the literature. 
 	
 	\begin{Definition}
 		An \textbf{Okutsu sequence} of $a$ over $K$ is a finite sequence 
 		\[  A_0 (= \{a\}), A_1, \dotsc , A_n,  \]
 		of common degree subsets of $\overline{K}$ satisfying the following conditions:
 		\sn (OS1) $\deg_K(a) = d_0 > d_1 > \dotsc > d_n = 1$, where $d_i:= \deg_K(A_i)$.
 		\n (OS2) For all $z\in\overline{K}$ with $\deg_K(z) < d_{i-1}$, there exists some $z_i\in A_i$ such that $v(a-z_i) \geq v(a-z)$.
 		\n (OS3) For all $z_i \in A_i$ and $z_{i-1} \in A_{i-1}$, we have that $v(a-z_{i-1}) > v(a-z_i)$. Thus $v(a-z_{i-1}) > D_{d_i}$.
 		\n (OS4) $\# A_i = 1$ if and only if $\max D_{d_i}$ exists. In this case, $A_i = \{a_i\}$ where 
 		\[  v(a-a_i) = \max D_{d_i} = \max \{ v(a-z) \mid z\in \overline{K}, \, \deg_K(z) < d_{i-1} \}.  \]
 		\n (OS5) If $\max D_{d_i}$ does not exist, then we set $A_i= \{z_\nu\}_{\nu<\l}$ where $\l$ is a limit ordinal, such that 
 		\[ v(a-z_\mu) > v(a-z_\nu) \text{ for all ordinals } \nu < \mu < \l.  \]
 	\end{Definition}
	We say that the above Okutsu sequences has length $n$.
	
	\begin{Remark}
		If $(K(a)|K,v)$ is a defectless extension, then $a,a_1, \dotsc , a_n$ is a complete distinguished chain of $a$ over $K$ if and only if $\{a\}, \{a_1\}, \dotsc , \{a_n\}$ is an Okutsu sequence of $a$ over $K$. In this case, 
		\[ v(a-a_i) = \d_K(a_{i-1}) = \max D_{d_i} \text{ for all } i\geq 1.  \]
	\end{Remark}
	

\subsection{Pseudo convergent sequences} 

\begin{Definition}
	A well-ordered set $\{z_\nu\}_{\nu<\l} \subset K$, where $\l$ is a limit ordinal, is said to be a \textbf{pseudo convergent sequences} (pcs) if 
	\[  v(z_\nu - z_\mu) < v(z_\mu - z_\rho) \text{ whenever } \nu<\mu<\rho<\l.  \]
\end{Definition}

\begin{Definition}
	Let $(\O|K,v)$ be an extension of valued fields and $\{z_\nu\}_{\nu<\l}$ a pcs in $K$. An element $x\in\O$ is said to be a \textbf{limit} of $\{z_\nu\}_{\nu<\l}$ if 
	\[ v(x-z_\nu) = v(z_\nu - z_{\nu +1}) \text{ for all } \nu<\l.   \]
\end{Definition}

Pseudo convergent sequences are fundamental to the study of \textit{immediate} extensions, i.e. extensions which render the value group and residue field unchanged. We refer the reader to [\ref{Kaplansky}] for a thorough treatment of these objects. 

\begin{Lemma}[\ref{Kaplansky}, Lemma 1 and Lemma 5]\label{Lemma {f(z_nu)} ultimately pcs}
	Let $\{z_\nu\}_{\nu<\l}$ be a pcs in $K$. Take $f(X)\in K[X]$. Then $\{vf(z_\nu)\}_{\nu<\l}$ is either ultimately stable or ultimately strictly increasing. 
\end{Lemma}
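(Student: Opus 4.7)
The plan is to reduce the polynomial case to the linear case over the algebraic closure, and then to sum. First, extend the valuation $v$ to some valuation $\bar v$ on $\overline{K}$ (any extension will do). If $f(X)\in K[X]$ factors as $f(X)=a\prod_{i=1}^n(X-c_i)$ with $a,c_i\in\overline{K}$, then
\[ vf(z_\nu)=\bar v(a)+\sum_{i=1}^n \bar v(z_\nu-c_i) \]
for every $\nu$, regardless of the choice of $\bar v$. So it suffices to analyze each factor $\bar v(z_\nu-c)$ for a fixed $c\in\overline{K}$ and then combine. Note that $\{z_\nu\}_{\nu<\l}$ remains a pcs in $(\overline{K},\bar v)$ with the same ``gauge'' $\g_\nu:=v(z_\nu-z_{\nu+1})$, which is strictly increasing and satisfies $\bar v(z_\mu-z_\nu)=\g_\mu$ whenever $\mu<\nu<\l$.

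For the linear case, write $\a_\nu:=\bar v(z_\nu-c)$ and consider two indices $\mu<\nu$. Applying the ultrametric inequality to $(z_\nu-c)-(z_\mu-c)=z_\nu-z_\mu$, of value $\g_\mu$, yields the dichotomy: either $\a_\mu=\a_\nu$ (and then both are $\geq\g_\mu$), or $\min\{\a_\mu,\a_\nu\}=\g_\mu$. From this I would derive the following sharper fact: if $\{\a_\nu\}$ is not ultimately constant, then $\a_\nu=\g_\nu$ for all sufficiently large $\nu$. The key step is to rule out the possibility that $\a_\mu>\a_\nu$ for some $\mu<\nu$; in that case $\a_\nu=\g_\mu<\g_\nu$, and then inspecting any $\rho>\nu$ gives $\a_\nu<\g_\nu\leq\a_\rho$ forces $\a_\nu=\g_\nu$, contradicting $\a_\nu=\g_\mu$. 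So whenever the $\a$-sequence is not ultimately constant, each $\a_\nu$ that fails to equal its successor must satisfy $\a_\nu<\a_{\nu+1}$, and hence $\a_\nu=\g_\nu$; since $\g_\nu$ is unbounded in the sense of being cofinal with the set of values $\{\a_\mu\}$ that equal some $\g_\mu$, one concludes that $\a_\nu=\g_\nu$ for all $\nu$ past some threshold.

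Finally, assemble: for each $i$, the sequence $\bar v(z_\nu-c_i)$ is, ultimately, either a constant $\a^{(i)}$ or equal to $\g_\nu$. Let $k$ be the number of indices $i$ of the second type. For $\nu$ large enough,
\[ vf(z_\nu)=\bar v(a)+\sum_{i\in\text{const}}\a^{(i)}+k\g_\nu. \]
If $k=0$ the sequence $\{vf(z_\nu)\}$ is ultimately constant, while if $k\geq 1$ it is ultimately strictly increasing because $\g_\nu$ is. This gives the claimed dichotomy.

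I expect the main obstacle to be the careful bookkeeping in the linear step: establishing the tight form ``either ultimately constant or ultimately equal to $\g_\nu$'' rather than a weaker ``ultimately constant or ultimately strictly increasing,'' since the summation at the end relies on the fact that the increasing terms all follow the same profile $\g_\nu$, so that a partial cancellation of $\g_\nu$'s against constants cannot occur.
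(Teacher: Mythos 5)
The paper does not prove this statement itself; it is cited from Kaplansky, whose own argument stays entirely inside $K$, expanding $f(z_\mu)-f(z_\nu)=\sum_{s\geq 1}\partial_s f(z_\nu)(z_\mu-z_\nu)^s$ via Hasse--Schmidt derivatives and showing that a single term dominates for large indices. Your route through the algebraic closure is a legitimate and fairly common alternative, and the final assembly step---summing the contributions of the linear factors---is correct once the linear dichotomy is in place.

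However, the linear step contains a genuine error. To rule out $\a_\mu>\a_\nu$ for some $\mu<\nu$, you correctly deduce $\a_\nu=\g_\mu<\g_\nu$, but then assert $\g_\nu\leq\a_\rho$ for $\rho>\nu$. That inequality is not justified and is in fact false in this situation: since $\a_\nu=\g_\mu<\g_\nu=\bar v(z_\rho-z_\nu)$, the ultrametric applied to $(z_\rho-c)=(z_\nu-c)+(z_\rho-z_\nu)$ gives $\a_\rho=\a_\nu$, which is strictly less than $\g_\nu$, not greater. Still, this computation is exactly what you need, reached by a different door: $\a_\rho=\a_\nu$ for every $\rho>\nu$ says $\{\a_\nu\}$ is ultimately constant, directly contradicting your standing hypothesis that it is not, and that is the contradiction that rules out $\a_\mu>\a_\nu$. (A smaller slip: in your first dichotomy, if $\a_\mu=\a_\nu$ then both are $\leq\g_\mu$, not $\geq\g_\mu$; if both were $>\g_\mu$, the ultrametric would force $\a_\nu=\g_\mu<\a_\mu$.) Once the linear step is repaired along these lines, the sharper fact and hence the rest of the proof go through.
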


\begin{Lemma}[\ref{Dutta min fields implicit const fields}, Lemma 9.1]\label{Lemma Dutta pcs}
	Let $\mathbf{z}:= \{z_\nu\}_{\nu<\l}$ be a pcs in $K$ without a limit in $K$. Then the following statements hold true:
	\sn (i) Assume that $f(X)\in K[X]$ is such that $\{vf(z_\nu)\}_{\nu<\l}$ is ultimately strictly increasing. Then at least one root of $f$ is a limit of $\mathbf{z}$.
	\n (ii) Assume that $a\in\overline{K}$ be a limit of $\mathbf{z}$. Take the minimal polynomial $Q(X)$ of $a$ over $K$. Then $\{vQ(z_\nu)\}_{\nu<\l}$ is ultimately strictly increasing. 
\end{Lemma}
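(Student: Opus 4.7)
The plan is to reduce both parts to a single dichotomy about the sequence $\{v(a-z_\nu)\}_{\nu<\l}$ for an arbitrary $a\in\overline{K}$. Setting $\g_\nu := v(z_\nu - z_{\nu+1})$, the pcs condition together with the strong triangle inequality gives $v(z_\nu - z_\mu) = \g_\nu$ whenever $\nu<\mu<\l$, and $\{\g_\nu\}$ is strictly increasing. The dichotomy to establish is the following: either $v(a - z_\nu) = \g_\nu$ for every $\nu<\l$, in which case $a$ is a limit of $\mathbf{z}$ and $\{v(a-z_\nu)\}$ is strictly increasing, or else $\{v(a-z_\nu)\}$ is ultimately constant. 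This follows from a short case analysis at the first index $\nu_0$ where $v(a-z_{\nu_0}) \neq \g_{\nu_0}$: if the value exceeds $\g_{\nu_0}$, applying the strong triangle inequality to $a - z_{\nu_0+1} = (a-z_{\nu_0}) - (z_{\nu_0+1}-z_{\nu_0})$ forces $v(a-z_{\nu_0+1}) = \g_{\nu_0} < \g_{\nu_0+1}$, reducing to the subcase below it; if the value falls below $\g_{\nu_0}$, then the identity $a - z_\nu = (a-z_{\nu_0}) + (z_{\nu_0} - z_\nu)$ together with $v(z_{\nu_0}-z_\nu) = \g_{\nu_0}$ for $\nu_0<\nu$ yields $v(a-z_\nu) = v(a-z_{\nu_0})$ for every $\nu \geq \nu_0$.

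For part (i), I would factor $f(X) = c \prod_i (X - a_i)$ over $\overline{K}$, so that $vf(z_\nu) = vc + \sum_i v(z_\nu - a_i)$. By the dichotomy, each summand is either ultimately constant or equal to $\g_\nu$ (hence strictly increasing). If no root $a_i$ were a limit of $\mathbf{z}$, every summand would ultimately be constant, and hence $\{vf(z_\nu)\}$ would be too, contradicting the hypothesis. Therefore at least one $a_i$ must be a limit of $\mathbf{z}$.

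For part (ii), write $Q(X) = \prod_{i=1}^n (X - a_i)$ over $\overline{K}$ with $a_1 = a$; monicity gives $vQ(z_\nu) = \sum_{i=1}^n v(z_\nu - a_i)$. The $i=1$ summand equals $\g_\nu$ and is strictly increasing, while each remaining summand is, by the dichotomy, either strictly increasing (if $a_i$ happens to be a limit of $\mathbf{z}$) or ultimately constant. Summing, $\{vQ(z_\nu)\}$ is ultimately strictly increasing.

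The main obstacle is the dichotomy itself: a classical Kaplansky-style computation requiring careful bookkeeping at the distinguished index $\nu_0$. Once that is in hand, both parts follow directly from the decomposition of $f$ and $Q$ into linear factors over $\overline{K}$.
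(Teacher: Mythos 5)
Your proof is correct. The dichotomy you establish for $\{v(a-z_\nu)\}$ — either this sequence coincides with $\{\gamma_\nu\}$ (so $a$ is a limit of $\mathbf{z}$ and the sequence is strictly increasing), or it is ultimately constant — together with the factorization over $\overline{K}$ is exactly the right argument; the case analysis at the first distinguishing index $\nu_0$ is handled correctly, including the reduction of the case $v(a-z_{\nu_0})>\gamma_{\nu_0}$ to the stabilization case at $\nu_0+1$. Note that the paper does not prove this statement but cites it from [\ref{Dutta min fields implicit const fields}, Lemma 9.1]; your argument is the natural one, and it in fact reproves as a byproduct the Kaplansky dichotomy recorded separately as Lemma \ref{Lemma {f(z_nu)} ultimately pcs}.
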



\subsection{Depth and Purity} Take any $a\in\overline{K}\setminus K$. Then $a$ admits an Okutsu sequence over $K$, as observed in [\ref{Nart Okutsu sequences henselian}, \ref{Nart Novacoski Depth of extns of valns}]. The following theorem has been proved in the language of Mac Lane-Vaqui\'{e} chains in the henselian setup in [\ref{Nart Okutsu sequences henselian}], and in [\ref{Nart Novacoski Depth of extns of valns}, Theorem 1.4] in the general case. In the language of complete distinguished chains, the result appears in [\ref{KA SKK chains associated with elts henselian}]. 

\begin{Theorem}
	Every Okutsu sequence of $a$ over $K$ has the same length. 
\end{Theorem}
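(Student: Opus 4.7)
My plan is to prove that the strictly decreasing degree sequence $d_0 > d_1 > \cdots > d_n = 1$ attached to any Okutsu sequence of $a$ depends only on $a$ and $K$, not on the chosen sequence. Since $d_0 = \deg_K(a)$ and $d_n = 1$ are fixed, intrinsic uniqueness of the intermediate $d_i$ forces any two Okutsu sequences of $a$ to have the same length $n$.

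I would proceed by induction on $i$, showing that each $d_i$ is intrinsic. Let $A_0, A_1, \ldots, A_n$ and $A_0', A_1', \ldots, A_m'$ be two Okutsu sequences of $a$, and write $d_1, d_1'$ for the common degrees of $A_1, A_1'$. Condition OS2 applied to both sequences shows that the sets $\{v(a - z_1) : z_1 \in A_1\}$ and $\{v(a - z_1') : z_1' \in A_1'\}$ are both cofinal in $\bigcup_{d < d_0} D_d$. When $\max D_{d_1}$ is attained (case OS4), I would argue that $d_1 = d_1'$ equals the \emph{minimal} degree at which this maximum is realized, using conditions OS1, OS3 and OS4 together. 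When the maximum is not attained (case OS5), both $A_1$ and $A_1'$ are pseudo convergent families with $a$ as a limit; here I would invoke Lemmas \ref{Lemma {f(z_nu)} ultimately pcs} and \ref{Lemma Dutta pcs} to show that such a pcs has an intrinsic degree associated to it, again giving $d_1 = d_1'$.

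For the inductive step, once $d_1 = d_1'$ is established together with the precise approximation quality that $A_1$ provides, the tails $A_1, A_2, \ldots, A_n$ and $A_1', A_2', \ldots, A_m'$ encode Okutsu-type data for the next level of approximation to $a$. Iterating the argument at level $i$ yields $d_i = d_i'$ for all $i$, and the sequences terminate simultaneously at degree $1$, giving $n = m$.

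The main obstacle I anticipate is the OS5 case, where $A_i$ is an infinite pseudo convergent family indexed by a limit ordinal rather than a single element. Pointwise comparison between $A_i$ and $A_i'$ is not available, and the real work consists of showing that the degree of a pcs whose limit encodes $a$ is an invariant of $a$ over $K$. This is precisely where the two Kaplansky-type lemmas on pseudo convergent sequences enter, controlling the asymptotic behavior of $vQ(z_\nu)$ for the minimal polynomial $Q$ of $a$ along any approximating family and ruling out the possibility that the family's degree differs between two Okutsu sequences.
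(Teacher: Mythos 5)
The paper does not prove this statement; it is cited from Nart [Okutsu sequences in Henselian valued fields], Nart--Novacoski [Depth of extensions of valuations], and Aghigh--Khanduja, so there is no internal proof to compare against, and I will evaluate the proposal on its own merits.

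Your main idea — that the degree sequence $d_0 > d_1 > \cdots > d_n = 1$ is intrinsic to $(a,K)$, so any two Okutsu sequences share it and therefore have the same length — is the right one and does suffice. However, the dichotomy you introduce between OS4 and OS5, and in particular the appeal to the Kaplansky-type lemmas for the OS5 case, is misplaced. There is a uniform argument that pins down $d_i$ without distinguishing the two cases and without any analysis of pseudo convergent sequences. From OS2 at level $i$, the values $v(a-z_i)$, $z_i\in A_i$, are $\geq$-cofinal in $\bigcup_{d<d_{i-1}} D_d$; from OS2 combined with OS3 at level $i+1$ (applicable whenever $d_i>1$, i.e. $i<n$), every $z$ with $\deg_K(z)<d_i$ satisfies $v(a-z)<v(a-z_i)$ for every $z_i\in A_i$, so $D_d$ is not cofinal in $\bigcup_{d'<d_{i-1}} D_{d'}$ for any $d<d_i$. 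Hence $d_i$ is characterized intrinsically as the least $d<d_{i-1}$ for which $D_d$ is cofinal in $\bigcup_{d'<d_{i-1}} D_{d'}$, and by downward induction the whole degree sequence — and with it the length — depends only on $a$ and $K$. The two pcs lemmas you cite govern the ultimate behavior of $vf(z_\nu)$ for polynomials $f$, not the degree of the terms of a pcs, and do not by themselves yield an ``intrinsic degree of a pcs''; as written, that step of your sketch would not go through, though the gap is filled by the cofinality argument above, which also handles the OS4 case (there $d_i$ becomes the least degree attaining $\max D_{d_i}$, for the same reason).
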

We call this length the \textbf{depth of $a$ over $K$} and denote it by $\ell_K(a)$. When the underlying field is tacitly understood, we simply denote it by $\ell(a)$.

\begin{Definition}
	We say that $a$ is \textbf{pure over $K$} if $\ell(a) = 1$.
\end{Definition}

\begin{Remark}\label{Rmk characterization pure elts}
	Assume that $a$ is pure over $K$. Take an Okutsu sequence $A_0 = \{a\}, A_1$. Note that $d_1 = \deg_K(A_1) = 1$. Then the following cases are possible: 
	\sn (i) $d(K(a)|K,v) = 1$. In this case, $A_1 = \{a_1\}$ and $(a,a_1)$ is a distinguished pair over $K$ as well as a complete distinguished chain of $a$ over $K$. In particular, 
	\[ v(a-a_1) = \max v(a-K).  \] 
	\n (ii) $d(K(a)|K,v) > 1$. In this case, $A_1 = \{z_\nu\}_{\nu<\l} \subset K$ is a pcs in $K$. By definition, for all $b\in\overline{K}$ with $\deg_K(b) < \deg_K(a)$, we have some $\nu<\l$ such that $v(a-b)\leq v(a-z_\nu)$. It follows that $v(a-K)$ has no maximal element. Moreover, $a$ is a limit of $A_1$, and has least degree over $K$ among all its algebraic limits. As a consequence, we obtain the following from Lemma \ref{Lemma Dutta pcs} and [\ref{Kaplansky}, Theorem 3]:
	\sn (A) $(K(a)|K,v)$ is immediate, that is, $vK(a) = vK$ and $K(a)v = Kv$,
	\n (B) for every $g(X)\in K[X]$ with $\deg g < \deg_K(a)$, there exists some $\nu_0 < \l$ such that
	\[ vg(z_\nu) = vg(a) \text{ for all } \nu_0 \leq \nu < \l.  \]
\end{Remark}

	
	\subsection{The Krasner's constant} 
	
	\begin{Definition}
		The \textbf{Krasner's constant} is denoted by $\o_K(a)$ and defined as
		\[ \o_K (a) := \max S_K(a).  \]
	\end{Definition}
	Thus,
	\[ \# S_K(a) = 1 \text{ if and only if } v(a-a^\prime) = \o_K(a) \text{ for all $K$-conjugates $a^\prime$ of $a$}.  \]
	
	\pars We state a variant of the important \textbf{Krasner's Lemma} [\ref{Kuh value groups residue fields rational fn fields}, Lemma 2.21]:
	
	\begin{Lemma}\label{Lemma Krasner}
		Take $a\in K^\sep\setminus K$ and $b\in \overline{K}$. Then
		\[  v(a-b) > \o_K(a) \Longrightarrow K(a) \subseteq K(b).  \]
	\end{Lemma}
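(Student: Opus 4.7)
The plan is to argue by contrapositive: assuming $a\notin K(b)$, I would exhibit a $K$-conjugate $a'\neq a$ of $a$ with $v(a-a') > \o_K(a)$, contradicting the very definition of the Krasner constant. The key ingredient is the henselianity of $K$: because the extension $K(a,b)|K$ is algebraic and $K$ is henselian, every $K$-embedding $\s\colon K(a,b)\hookrightarrow \overline{K}$ preserves $v$, that is $v\circ\s = v$ on $K(a,b)$. This is the one fact from the preliminaries that does all the real work.

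The steps I would follow are as follows. First, since $a\in K^\sep$, the element $a$ is separable over the intermediate field $K(b)$ as well (its minimal polynomial over $K(b)$ divides the separable minimal polynomial of $a$ over $K$). Hence, to show $a\in K(b)$ it suffices to verify that every $K(b)$-embedding $\s\colon K(a,b)\to\overline{K}$ satisfies $\s(a)=a$. Fix such a $\s$; by construction $\s(b)=b$, and $\s(a)$ is some $K$-conjugate of $a$. Second, by uniqueness of the extension of $v$ to $\overline{K}$ over the henselian field $K$, we have $v(\s(a)-b) = v(\s(a-b)) = v(a-b)$. Third, applying the ultrametric (strong triangle) inequality to $a-\s(a) = (a-b) - (\s(a)-b)$ yields
\[
v(a-\s(a))\;\geq\; \min\bigl\{v(a-b),\,v(\s(a)-b)\bigr\}\;=\;v(a-b)\;>\;\o_K(a).
\]
If $\s(a)\neq a$, then $v(a-\s(a))\in S_K(a)$ and therefore $v(a-\s(a))\leq \o_K(a)$, a contradiction. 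Hence $\s(a)=a$, and separability of $a$ over $K(b)$ forces $a\in K(b)$, whence $K(a)\subseteq K(b)$.

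There is essentially no obstacle beyond correctly invoking henselianity: the only subtlety is ensuring that the $K(b)$-embedding $\s$ can be taken into $\overline{K}$ (so that the unique valuation $v$ on $\overline{K}$ is indeed preserved by $\s$), which is standard, and that $a$ remains separable over $K(b)$, which is automatic. No hypothesis beyond $a\in K^\sep$ and $b\in\overline{K}$ is needed, and the argument does not require $b$ itself to be separable over $K$.
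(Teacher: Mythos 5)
Your proof is correct and is the standard argument for Krasner's Lemma; the paper itself gives no proof, merely citing the result from the literature. All steps are sound: henselianity of $K$ gives uniqueness of the extension of $v$ to $\overline{K}$ and hence $v\circ\s=v$ for every $K$-embedding $\s$; separability of $a$ over $K$ passes to $K(b)$; and the ultrametric inequality together with the definition of $\o_K(a)$ forces every $K(b)$-embedding of $K(a,b)$ into $\overline{K}$ to fix $a$, whence $a\in K(b)$.
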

As a consequence, we obtain that
\begin{equation}\label{Eqn 1}
	D_{d_1} \leq \o_K(a) \text{ whenever } A_0, A_1, \dotsc , A_n \text{ is an Okutsu sequence of $a$ over $K$}.
\end{equation}


\subsection{Some ramification theory} Set $G:= \Gal(\overline{K}|K)$. The fact that $(K,v)$ is henselian implies that $v \s a = va$ for all $a\in \overline{K}$ and $\s\in G$. We define the \textbf{absolute ramification group} of $(K,v)$ as 
\[ G^r := \{ \s\in G \mid v(\s a - a) > va \text{ for all } a\in K^\sep\setminus \{0\}  \}.  \]
The corresponding fixed field in $K^\sep$ is denoted by $K^r$ and is called the \textbf{absolute ramification field} of $(K,v)$. For an arbitrary algebraic extension $(L|K,v)$, we have that $L^r = L.K^r$ [\ref{Kuh vln model}, Theorem 5.10], where $L.K^r$ denotes the compositum of $L$ and $K^r$. 

\begin{Definition}\label{Defn tame}
	An algebraic extension $(L|K,v)$ is said to be \textbf{tame} if every finite subextension $(E|K,v)$ satisfies the following conditions:
	\sn (TE1) $(vE:vK)$ is coprime to $p$,
	\n (TE2) $Ev|Kv$ is separable, 
	\n (TE3) $d(E|K,v) = 1$. 
\end{Definition}
It is well-known that $(L|K,v)$ is tame if and only if $L\subseteq K^r$ [\ref{Kuh vln model}, Theorem 11.1].

\begin{Lemma}\label{Lemma va = kras(a) tame}
	Let $(K(a)|K,v)$ be a separable extension. Assume that $v(a-d) = \o_K(a)$ for some $d\in K$. Then $(K(a)|K,v)$ is a tame extension.
\end{Lemma}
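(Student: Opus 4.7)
The plan is to use the characterization that $(K(a)|K,v)$ is tame if and only if $K(a) \subseteq K^r$, which is mentioned right before the statement. Equivalently, since $K^r$ is the fixed field of $G^r$, I need to show that every $\sigma \in G^r$ fixes $a$.

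So let $\sigma \in G^r$ be arbitrary. Since $(K(a)|K,v)$ is separable, $a \in K^\sep$, and therefore $a - d \in K^\sep \setminus \{0\}$ (note $a \notin K$ since otherwise $S_K(a)$ would be empty). Applying the definition of $G^r$ to the element $a - d$ gives
\[ v(\sigma(a-d) - (a-d)) > v(a-d) = \omega_K(a). \]
Because $d \in K$ is fixed by $\sigma$, this rearranges to $v(\sigma a - a) > \omega_K(a)$.

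Now I claim this forces $\sigma a = a$. Indeed, if $\sigma a \neq a$, then $\sigma a$ is a $K$-conjugate of $a$ distinct from $a$, so $v(\sigma a - a) \in S_K(a)$, and by the definition of $\omega_K(a) = \max S_K(a)$ we would have $v(\sigma a - a) \leq \omega_K(a)$, contradicting the strict inequality above. Hence $\sigma a = a$ for every $\sigma \in G^r$, meaning $a \in K^r$ and therefore $K(a) \subseteq K^r$, which by the cited theorem gives tameness of $(K(a)|K,v)$.

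There is no real obstacle here; the only subtlety is to be sure that $a - d$ is a legitimate input to the definition of $G^r$ (which requires separability over $K$ and nonzero-ness) and to invoke Krasner-type reasoning only through the already-defined $\omega_K(a)$, rather than through Lemma \ref{Lemma Krasner} itself (which would not directly give fixing by $\sigma$).
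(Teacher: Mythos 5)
Your proof is correct and takes essentially the same route as the paper: you apply the definition of $G^r$ to the element $a-d$ and then contradict $\omega_K(a)=\max S_K(a)$, whereas the paper first makes the cosmetic reduction ``WLOG $va=\omega_K(a)$'' by replacing $a$ with $a-d$ and then runs the identical contradiction. The difference is purely stylistic; the key observations (shifting by $d\in K$ leaves $S_K$ unchanged, $\sigma\in G^r$ forces $v(\sigma a-a)>\omega_K(a)$, and this is incompatible with $\sigma a\neq a$) are the same.
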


\begin{proof}
	The fact that $d\in K$ implies that $\o_K(a) = \o_K(a-d)$. We can thus assume that $va = \o_K(a)$ without any loss of generality. It is enough to show that $a\in K^r$. Suppose the contrary. Then there exists $\s\in G^r$ such that $\s a \neq a$. In other words, $\s a \neq a$ and $v(\s a - a) > va = \o_K(a)$ which thus yields a contradiction.
\end{proof}

\begin{Definition}
	An algebraic extension $(L|K,v)$ is said to be \textbf{purely wild} if $vL/vK$ is a $p$-group and $Lv|Kv$ is purely inseparable. Equivalently, $L\sect K^r = K$.
\end{Definition}  

\pars We now state the following lemma which will be required later.

\begin{Lemma}[\ref{Dutta non-uniqueness max purely wild extns}, Lemma 2.5]\label{Lemma tame purely wild valn disjoint}
	Let $(L|K,v)$ be a tame extension and $(F|K,v)$ be purely wild. Then the following statements hold true:
	\sn (i) $L$ and $F$ are linearly disjoint over $K$,
	\n (ii) $v(L.F) = vL + vF$ and $(L.F)v = Lv. Fv$,
	\n (iii) $vL\sect vF = vK$ and the residue fields $Lv$ and $Fv$ are linearly disjoint over $Kv$,
	\n (iv) $(L.F|F,v)$ is a tame extension and $(L.F|L,v)$ is purely wild.   
\end{Lemma}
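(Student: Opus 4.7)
The plan is to reduce to finite subextensions and exploit two characterizations: $(L|K,v)$ tame is equivalent to $L\subseteq K^r$, while $(F|K,v)$ purely wild is equivalent to $F\cap K^r = K$. The first half of (iv) is the cleanest: since $L\subseteq K^r$ and $F^r = F.K^r$ by the cited identity, $L.F \subseteq F.K^r = F^r$, so $(L.F|F,v)$ is tame.

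For (i), I would pass to finite subextensions $L_0|K$ of $L|K$ and $F_0|K$ of $F|K$, which remain tame and purely wild respectively. Replacing $L_0$ by its Galois closure $\widetilde{L_0}$ over $K$ inside $\overline K$, and using that $K^r|K$ is Galois together with $L_0\subseteq K^r$, we obtain $\widetilde{L_0}\subseteq K^r$, so $\widetilde{L_0}|K$ is still tame. Then $\widetilde{L_0}\cap F_0 \subseteq K^r\cap F = K$, and since $\widetilde{L_0}|K$ is Galois with trivial intersection with $F_0$, the two are linearly disjoint over $K$; a fortiori so are $L_0$ and $F_0$. Taking unions over all finite subextensions yields (i) in the infinite case.

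For (iii), $(vL\cap vF)/vK$ embeds into both $vL/vK$, a $p^\prime$-group by (TE1), and $vF/vK$, a $p$-group by purity, hence is trivial: $vL\cap vF = vK$. Similarly, $Lv\cap Fv$ is simultaneously separable and purely inseparable over $Kv$, hence equals $Kv$, and the classical fact that separable-algebraic and purely inseparable algebraic extensions are linearly disjoint yields linear disjointness of $Lv$ and $Fv$ over $Kv$. For (ii), tameness of both $(L|K,v)$ and $(L.F|F,v)$ supplies the defectless equalities $[L:K]=(vL:vK)(Lv:Kv)$ and $[L.F:F]=(v(L.F):vF)((L.F)v:Fv)$. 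Combined with $[L.F:F]=[L:K]$ from (i) and the lower bounds $(v(L.F):vF)\geq (vL:vL\cap vF)=(vL:vK)$ and $((L.F)v:Fv)\geq (Lv:Kv)$ coming from (iii), both inequalities are forced to be equalities, giving $v(L.F)=vL+vF$ and $(L.F)v=Lv.Fv$. The remaining half of (iv) is then immediate: $v(L.F)/vL\cong vF/vK$ is a $p$-group, and $(L.F)v=Lv.Fv$ is purely inseparable over $Lv$ since $Fv$ is over $Kv$.

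The main obstacle is bookkeeping rather than substance: one must ensure the Galois closure $\widetilde{L_0}$ genuinely lies in $K^r$ (resting on the standard normality of $G^r$ in $G$, whence $K^r|K$ is Galois), and verify that direct limits of value groups, residue fields, and compositums over finite subextensions are compatible with conclusions (ii) and (iii) simultaneously. Once the finite case is in hand, these limiting arguments are routine.
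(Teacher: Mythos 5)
Your proof is correct. Since the paper cites this lemma from an external reference rather than proving it in the text, there is no in-paper proof to compare against, but your argument is the standard one and the deductions are sound. In particular: the reduction to finite subextensions and Galois closures is valid because $G^r$ is normal in $G$, so $K^r|K$ is indeed Galois and the Galois closure of $L_0 \subseteq K^r$ stays inside $K^r$; the order argument for $(vL\cap vF)/vK$ (a torsion group that is simultaneously $p^\prime$-torsion and $p$-torsion, hence trivial) and the separable/purely-inseparable disjointness for residue fields are exactly the right tools for (iii); and the degree count in (ii), using $[L.F:F]=[L:K]$, the two defectless equalities from tameness, and the lower bounds $(v(L.F):vF)\geq (vL+vF:vF)=(vL:vK)$ and $[(L.F)v:Fv]\geq [Lv.Fv:Fv]=[Lv:Kv]$, forces all inequalities to be equalities. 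The second half of (iv) follows correctly from (ii) and (iii) via $v(L.F)/vL \cong vF/vK$ and the fact that a compositum with a purely inseparable extension is purely inseparable. Your ordering of the parts (first half of (iv), then (i), (iii), (ii), second half of (iv)) is internally consistent, and the remark that the passage from finite subextensions to the infinite case is a direct-limit routine is fair.
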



\subsection{(Minimal) Pairs of definition} Let $K(X)$ be a rational function field over $K$. Take an extension $w$ of $v$ to $K(X)$ and assume that it admits equality in the \textbf{Abhyankar Inequality}, that is, 
\[ \dim_{\QQ} (\QQ \tensor_{\ZZ} wK(X)/vK) + \trdeg [K(X)w:Kv] = 1.  \]
In this case we say that $w$ is a \textbf{valuation transcendental extension} of $v$ to $K(X)$. We fix an extension of $w$ to $\overline{K}(X)$ and denote it again by $w$. It is well-known (cf. [\ref{AP sur une classe}, \ref{APZ characterization of residual trans extns}, \ref{APZ2 minimal pairs}, \ref{Dutta min fields implicit const fields}, \ref{Kuh value groups residue fields rational fn fields}]) that $w$ is completely characterised by a pair $(a,\g) \in \overline{K}\times w\overline{K}(X)$ in the following sense:
\[ w(X-z) = \min\{ \g, v(a-z) \} \text{ for all } z\in \overline{K}.  \]
We say that $(a,\g)$ is a \textbf{pair of definition for $w$} and write $w= v_{a,\g}$. A pair of definition for $w$ may not be unique. It has been observed in [\ref{AP sur une classe}, Proposition 3] that
\[ (b,\g^\prime) \text{ is also a pair of definition for $w$ if and only if } v(a-b)\geq \g=\g^\prime.  \] 

\begin{Definition}
	We say that $(a,\g)$ is a \textbf{minimal pair of definition for $w$ over $K$} if it has minimal degree over $K$ among all pairs of definition, that is, 
	\[  v(a-b)\geq \g \Longrightarrow \deg_K(b)\geq \deg_K(a). \]
\end{Definition}

The following fact appears in [\ref{Dutta imp const fields key pols valn alg extns}, Lemma 3.2]:

\begin{Lemma}\label{Lemma vK(a) < vK(b) and K(a)v < K(b)v}
	Take a minimal pair of definition $(a,\g)$ for $w$ over $K$ and a pair of definition $(b,\g)$. Then $vK(a)\subseteq vK(b)$ and $K(a)v\subseteq K(b)v$. 
\end{Lemma}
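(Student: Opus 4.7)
The plan is to show that for every $g(X)\in K[X]$ with $\deg g < \deg_K(a)$, one has $vg(a) = vg(b)$, and moreover $(g(a)/g(b))v = 1$ in $\overline{K}v$. Once this is established, both inclusions follow by representing an arbitrary element of $K(a)^{\times}$ as $g(a)/h(a)$ with $\deg g, \deg h < \deg_K(a)$ (using reduction modulo the minimal polynomial of $a$), so that $v(g(a)/h(a)) = v(g(b)/h(b)) \in vK(b)$ and, when the value is zero, the residues agree in $K(b)v$.

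To prove the key fact, I would first invoke the characterization of pairs of definition recalled in the paper: since $(b,\g)$ is also a pair of definition for $w$, we have $v(a-b) \geq \g$. Now factor $g(X) = c\prod_{i=1}^n (X-\a_i)$ in $\overline{K}[X]$. Every root satisfies $\deg_K(\a_i) \leq \deg g < \deg_K(a)$, so the minimality of $(a,\g)$ forces $v(a-\a_i) < \g$ for every $i$. Combined with $v(a-b)\geq\g$, the ultrametric inequality yields
\[ v(b-\a_i) = v((a-\a_i)-(a-b)) = v(a-\a_i) \quad \text{for all } i, \]
whence $vg(a) = vc + \sum_i v(a-\a_i) = vc + \sum_i v(b-\a_i) = vg(b)$. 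The sharper residue statement follows from
\[ \frac{a-\a_i}{b-\a_i} = 1 + \frac{a-b}{b-\a_i}, \]
whose second summand has positive value (as $v(a-b)\geq\g > v(a-\a_i) = v(b-\a_i)$), so each factor of $g(a)/g(b)$ is a $1$-unit and so is their product.

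The rest is bookkeeping. For $vK(a)\subseteq vK(b)$, an arbitrary $u\in K(a)^{\times}$ is $g(a)/h(a)$ with $\deg g, \deg h < \deg_K(a)$, and the preceding calculation gives $vu = vg(a) - vh(a) = vg(b) - vh(b) \in vK(b)$. For $K(a)v\subseteq K(b)v$, pick $uv\in K(a)v$ with $vu = 0$; using the same representation, both $g(a)/g(b)$ and $h(a)/h(b)$ have residue one, so $(g(a)/h(a))v = (g(b)/h(b))v$, and the right-hand side lies in $K(b)v$.

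The only conceptual step that requires care is the deduction $v(a-\a_i) < \g$ from minimality; this is where the hypothesis actively enters, because a root of a polynomial of small degree over $K$ must itself have small degree, and minimality then rules out equality or excess in its distance from $a$. No deeper ramification theoretic input is needed, and the argument is purely ultrametric once that observation is in hand.
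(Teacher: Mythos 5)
Your proof is correct. The paper does not prove this lemma itself but cites it from [Dutta, \emph{On the implicit constant fields and key polynomials for valuation algebraic extensions}, Lemma 3.2]; nonetheless, the argument you give is the standard one for this kind of statement in the theory of minimal pairs, and every step is sound. The crux is exactly as you identify it: minimality of $(a,\gamma)$ forces $v(a-\alpha_i)<\gamma$ for each root $\alpha_i$ of a polynomial of degree less than $\deg_K(a)$, and since $v(a-b)\geq\gamma$ by the characterization of pairs of definition, the ultrametric inequality gives both $v(a-\alpha_i)=v(b-\alpha_i)$ and that $(a-\alpha_i)/(b-\alpha_i)$ is a $1$-unit; taking products over roots gives the value and residue statements for arbitrary $g$ of degree less than $\deg_K(a)$, from which both inclusions follow since $K(a)=K[a]$. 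The only minor points worth noting are that $g(a),g(b)\neq 0$ (because $a$ has too large a degree to be a root and $b$ is bounded away from each $\alpha_i$), and that the constant case is trivial; neither affects the argument.
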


We will also require the following easy observation:

\begin{Lemma}\label{Lemma min pair of defn over K(z)}
	Take a minimal pair of definition $(a,\g)$ for $w$ over $K$. Take some $z\in K(a)$. Then $(a,\g)$ is a minimal pair of definition for $w$ over $K(z)$ as well.
\end{Lemma}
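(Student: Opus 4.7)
The plan is to argue by contradiction, using the fact that $(a,\gamma)$ being a pair of definition for $w$ is a property involving only the value $w(X-z')$ for $z' \in \overline{K}$, so $(a,\gamma)$ remains a pair of definition for $w$ regardless of the base field chosen between $K$ and $\overline{K}$. In particular, it is a pair of definition over $K(z)$. By the characterization of pairs of definition stated earlier in the paper, any other pair of definition $(b,\gamma)$ for $w$ is precisely one for which $v(a-b) \geq \gamma$. So minimality of $(a,\gamma)$ over $K(z)$ amounts to the assertion:
\[ v(a-b) \geq \gamma \Longrightarrow \deg_{K(z)}(b) \geq \deg_{K(z)}(a). \]

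To establish this, I would suppose towards contradiction that there exists $b \in \overline{K}$ with $v(a-b) \geq \gamma$ but $\deg_{K(z)}(b) < \deg_{K(z)}(a)$. Since $z \in K(a)$, we have the tower $K \subseteq K(z) \subseteq K(a)$, so $\deg_K(a) = [K(a):K(z)] \cdot [K(z):K] = \deg_{K(z)}(a) \cdot [K(z):K]$. Consider the compositum $K(z,b)$; multiplicativity of degrees in the tower $K \subseteq K(z) \subseteq K(z,b)$ yields
\[ [K(z,b):K] = \deg_{K(z)}(b) \cdot [K(z):K] < \deg_{K(z)}(a) \cdot [K(z):K] = \deg_K(a). \]
Since $\deg_K(b) = [K(b):K] \leq [K(z,b):K]$, we conclude $\deg_K(b) < \deg_K(a)$. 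Together with $v(a-b) \geq \gamma$, this contradicts the minimality of $(a,\gamma)$ as a pair of definition for $w$ over $K$.

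There is really no serious obstacle here: the argument is a straightforward tower-of-degrees computation combined with the observation that the notion of being a pair of definition is insensitive to enlarging the base field from $K$ to $K(z)$, and that minimality is the only condition that changes meaning. The only point that deserves care is noticing that $\deg_K(b)$ is bounded by $[K(z,b):K]$ even when $z$ is not a priori in $K(b)$, which is immediate from $K(b) \subseteq K(z,b)$.
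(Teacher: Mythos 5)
Your proof is correct and takes essentially the same route as the paper: the key step in both is the tower-of-degrees computation $[K(z,b):K] = \deg_{K(z)}(b)\cdot[K(z):K]$ combined with $\deg_K(b) \leq [K(z,b):K]$, using $z \in K(a)$ to convert $\deg_{K(z)}(a)\cdot[K(z):K]$ into $\deg_K(a)$. The only cosmetic difference is that you argue by contradiction, while the paper starts from a minimal pair $(b,\gamma)$ over $K(z)$ and shows directly that the chain of inequalities collapses to equalities.
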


\begin{proof}
	Take some minimal pair of definition $(b,\g)$ for $w$ over $K(z)$. Hence $\deg_{K(z)}(b) \leq \deg_{K(z)}(a)$. Since $z\in K(a)$, it follows that $[K(b,z):K] \leq \deg_K(a)$. As a consequence, 
	\[ \deg_K(b) \leq  [K(b,z):K] \leq \deg_K(a) \leq \deg_K(b), \]
	where the last inequality follows from the minimality of $(a,\g)$. The assertion now follows.
\end{proof}

The connection between minimal pairs of definition and distinguished pairs is captured by the following observation which is immediate and hence its proof is omitted:
\begin{Proposition}\label{Prop dist pair min pair}
	Assume that $(b,a)$ is a distinguished pair over $K$ and set $\g = \d(b,K)$. Then $(a,\g)$ is a minimal pair of definition for $v_{b,\g}$ over $K$. Moreover, if $\g^\prime$ is an element in an ordered abelian group containing $v\overline{K}$ such that $\g^\prime>\g$, then $(b,\g^\prime)$ is a minimal pair of definition for $v_{b,\g^\prime}$ over $K$.  
\end{Proposition}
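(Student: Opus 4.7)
The plan is to prove the two claims separately, in each case first establishing that the proposed pair is a pair of definition, then verifying the minimality property by contradiction using the defining conditions (DP1)--(DP3).

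For the first claim, I note that by the criterion recalled from \cite{AP sur une classe} just before the definition of minimal pair (``$(b,\g^\prime)$ is also a pair of definition for $w$ if and only if $v(a-b)\geq \g=\g^\prime$''), applied to $w:=v_{b,\g}$ with its defining pair $(b,\g)$, the pair $(a,\g)$ is a pair of definition for $v_{b,\g}$ precisely when $v(b-a)\geq \g$. Since $\g=\d_K(b)=v(b-a)$ by assumption, this holds, so $v_{a,\g}=v_{b,\g}$. It remains to verify minimality. Suppose, toward a contradiction, that $(c,\g)$ is a pair of definition for $v_{b,\g}$ with $\deg_K(c)<\deg_K(a)$. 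Then by the same criterion $v(a-c)\geq \g$, and combining with $v(b-a)=\g$ via the ultrametric inequality yields $v(b-c)\geq \g$. On the other hand, since $\deg_K(c)<\deg_K(a)<\deg_K(b)$, condition (DP2) gives $v(b-c)\leq v(b-a)=\g$. Hence $v(b-c)=v(b-a)=\g$, and now (DP3) forces $\deg_K(c)\geq\deg_K(a)$, contradicting our assumption. This establishes that $(a,\g)$ is a minimal pair of definition for $v_{b,\g}$ over $K$.

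For the ``moreover'' claim, suppose $\g^\prime>\g$ and that $(c,\g^\prime)$ is a pair of definition for $v_{b,\g^\prime}$ over $K$ with $\deg_K(c)<\deg_K(b)$. The criterion then gives $v(b-c)\geq \g^\prime>\g$. However, since $\g=\d_K(b)=\max\{v(b-z)\mid \deg_K(z)<\deg_K(b)\}$, condition (DP2) applied to $c$ yields $v(b-c)\leq \g$, a contradiction. Thus every pair of definition of the form $(c,\g^\prime)$ satisfies $\deg_K(c)\geq\deg_K(b)$, and $(b,\g^\prime)$ is a minimal pair of definition for $v_{b,\g^\prime}$ over $K$.

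This is essentially an unwinding of the definitions, so I do not anticipate a substantive obstacle; the only delicate point is making sure the ultrametric step in the first argument produces the sharp equality $v(b-c)=\g$ (rather than a mere inequality), which is exactly what is needed to invoke (DP3). The characterization of when two pairs define the same valuation on $K(X)$ is what turns (DP2) and (DP3) into minimality statements, so the entire argument hinges on that criterion.
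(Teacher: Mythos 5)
Your proof is correct, and since the paper explicitly omits the argument as ``immediate,'' your unwinding of (DP1)--(DP3) together with the Alexandru--Popescu criterion for when two pairs define the same valuation is precisely the intended reasoning. The one step worth flagging as the pivot is exactly the one you singled out: combining $v(a-c)\geq\g$ with $v(b-a)=\g$ yields $v(b-c)\geq\g$, while (DP2) caps $v(b-c)$ at $\g$, so you get equality and (DP3) then applies with force; without that sharpening the argument would stall.
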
 	
	

\subsection{$j$-invariant and Implicit Constant Field} Fix an extension of $w$ to $\overline{K(X)}$, denoted again by $w$. 

\begin{Definition}
	The \textbf{Implicit Constant Field} of the extension $(K(X)|K,w)$ is defined as
	\[ IC_K(w) := \overline{K} \sect K(X)^h,  \]
	where $K(X)^h$ denotes the \emph{henselization} of $(K(X),w)$. 
\end{Definition}

Observe that $IC_K(w)$ is an algebraic extension of $K$ and hence is itself henselian. Moreover, it follows from the definition that
\[ IC_K(w) \subseteq IC_L(w) \text{ whenever }  K\subseteq L.  \]

\begin{Remark}
	The notion of Implicit Constant Fields was introduced by Kuhlmann in [\ref{Kuh value groups residue fields rational fn fields}] to construct extensions with prescribed value groups and residue fields. They also play a pivotal role in studying extensions over tame fields (cf. [\ref{Dutta FVK tamekeypols}]).
\end{Remark}

The following theorem provides bounds for the Implicit Constant Field. For a complete proof, we refer the reader to [\ref{Dutta min fields implicit const fields}, Theorem 1.3] and [\ref{Dutta min pairs inertia ram deg impl const fields}, Theorem 1.3]. 

\begin{Theorem}\label{Thm bound of ICF}
	Take a minimal pair of definition $(a,\g)$ for $w$ over $K$. Set $L:= K(a)\sect K^r$. Then,
	\[ L \subseteq IC_K(w) \subseteq K(a).  \]
\end{Theorem}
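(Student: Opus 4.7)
I would prove the two inclusions separately.

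\emph{Upper bound $IC_K(w)\subseteq K(a)$.} First, $K(a)$ is henselian, since the henselian field $K$ admits $v$ uniquely to its algebraic extensions. By Lemma \ref{Lemma min pair of defn over K(z)} (applied with $z=a$), the pair $(a,\gamma)$ remains a minimal pair of definition of $w$ over $K(a)$, but now with the degenerate property $\deg_{K(a)}(a)=1$. Consequently $Y:=X-a$ is a transcendence generator of $K(a)(X)|K(a)$ with $wY=\gamma$, and the extension $(K(a)(X)|K(a),w)$ is a ``pure'' valuation- or residue-transcendental extension according to whether $\gamma\notin vK(a)\otimes\QQ$ or $\gamma\in vK(a)$. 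A direct factorization argument --- any $f\in K(a)[X]$ factors over $\overline{K(a)}$ as $c\prod_i(X-\alpha_i)$ and one reads off $wf$ from $w(X-\alpha_i)=\min\{\gamma,v(a-\alpha_i)\}$ --- yields $IC_{K(a)}(w)=K(a)$. Choosing compatible embeddings so that $K(X)^h\subseteq K(a)(X)^h$ inside $\overline{K(X)}$, we conclude
\[ IC_K(w)=\overline{K}\sect K(X)^h\subseteq\overline{K}\sect K(a)(X)^h=IC_{K(a)}(w)=K(a). \]

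\emph{Lower bound $L\subseteq IC_K(w)$.} Fix $b\in L$; the task is to exhibit $b\in K(X)^h$. By the Galois characterization of the henselization, this is equivalent to showing that every $K(X)$-automorphism $\tau$ of $\overline{K(X)}$ which preserves the chosen extension of $w$ fixes $b$. Any such $\tau$ restricts to $\sigma\in\Gal(K^\sep|K)$, and $w$-invariance --- applied to the formula $w(X-z)=\min\{\gamma,v(a-z)\}$ --- forces $\min\{\gamma,v(a-\sigma\alpha)\}=\min\{\gamma,v(a-\alpha)\}$ for every $\alpha\in\overline{K}$. Combining this constraint with the minimality of $(a,\gamma)$, Krasner's Lemma (Lemma \ref{Lemma Krasner}), and the tame--purely-wild disjointness of Lemma \ref{Lemma tame purely wild valn disjoint}, one argues that such $\sigma$ must restrict trivially to the maximal tame subextension $L=K(a)\sect K^r$ of $K(a)|K$. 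Hence $\sigma(b)=b$ and so $\tau(b)=b$, as required.

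\emph{Main obstacle.} The substantive step is the lower bound: extracting the \emph{tame} subextension $L$ from the single datum $(a,\gamma)$ and identifying the image in $\Gal(K^\sep|K)$ of the decomposition group at $w$ as one that fixes $L$. This threads Krasner's Lemma through the $w$-invariance constraint and requires the ramification-theoretic structure (tame versus purely wild). The upper bound, by contrast, is essentially tautological once one base-changes to $K(a)$, since then the pair of definition becomes rational and the extension degenerates to a pure transcendental one.
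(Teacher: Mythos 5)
The paper does not actually supply a proof of this theorem; it is imported wholesale from the first author's earlier work (the citations to [\ref{Dutta min fields implicit const fields}, Theorem~1.3] and [\ref{Dutta min pairs inertia ram deg impl const fields}, Theorem~1.3]), so there is no internal argument to compare against. Judged on its own, your proposal is a plan with a correct framework but a genuine gap at the decisive step.

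Your treatment of the upper bound $IC_K(w)\subseteq K(a)$ is sound: after base-changing to $K(a)$ the pair of definition becomes rational, the valuation is Gauss-type in $Y=X-a$, a factorization computation gives $IC_{K(a)}(w)=K(a)$, and the monotonicity $IC_K(w)\subseteq IC_{K(a)}(w)$ finishes. This matches the paper's remark that the upper bound needs no minimality.

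The lower bound is where the substance is, and you have set up the right Galois-theoretic reduction: you correctly derive that any $\sigma$ in the image of the decomposition group of $w$ must satisfy $v(\sigma a - a)\geq\gamma$ (taking $\alpha=a$ in your displayed constraint). But the final sentence --- that ``one argues'' such $\sigma$ must act trivially on $L=K(a)\cap K^r$ using Krasner's Lemma, minimality, and tame/purely-wild disjointness --- asserts precisely the content of the theorem without proving it. Indeed, the claim ``$v(\sigma a - a)\geq\gamma \Rightarrow \sigma|_L=\mathrm{id}$'' is literally equivalent to $L\subseteq (K^\sep)^H = IC_K(w)$, so no progress has been made on the hard direction; moreover, the tools you list do not obviously close it. (Krasner's Lemma compares $v(\sigma a-a)$ with $\omega_K(a)$, but minimality only gives $\gamma>\delta_K(a)$, not $\gamma>\omega_K(a)$; and a Taylor-expansion estimate on $g(\sigma a)-g(a)$ produces at best a lower bound on $v(\sigma b - b)$, which cannot force $\sigma b = b$.) A cleaner route to the lower bound that avoids this difficulty is to observe that, since $(a,\gamma)$ is also a minimal pair over $L$ by Lemma~\ref{Lemma min pair of defn over K(z)} and $L(a)=K(a)$, the standard minimal-pair formulas give $wL(X)=wK(X)$ and $L(X)w=K(X)w$; hence $(L(X)^h|K(X)^h,w)$ is an \emph{immediate} extension, while $L\subseteq K^r$ forces it to be \emph{tame} and therefore defectless, so it is trivial. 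This yields $L\subseteq K(X)^h$, i.e.\ $L\subseteq IC_K(w)$, without ever identifying the decomposition group.
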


\begin{Remark}
	The upper bound of $IC_K(w)$ in Theorem \ref{Thm bound of ICF} holds true for any arbitrary pair of definition (cf. [\ref{Dutta min fields implicit const fields}, Lemma 5.1]). However, the minimality is crucial in obtaining the lower bound, a fact which is essential in Proposition \ref{Prop S_K(a) = 1 necessary condns} and Theorem \ref{Thm S_K(a) bound a pure}. 
\end{Remark}

\begin{Remark}
	Observe that $L^r = L.K^r = K^r$ and hence $(K(a)|L,v)$ is purely wild. It follows that the subextensions $(K(a)|IC_K(w),v)$ and $(IC_K(w)|L,v)$ are purely wild extensions. 
\end{Remark}

\pars We associate an invariant to $w$, called the \textbf{$j$-invariant}, in the following way.

\begin{Definition}
	Take a pair of definition $(a,\g)$ for $w$. Take a polynomial $f(X)\in \overline{K}[X]$ and consider its decomposition $f(X) = (X-z_1)\dotsc (X-z_n), \, z_i \in \overline{K}$. We define 
	\[ j_w(f):= \# \{z_i \mid v(a-z_i)\geq \g\}.   \] 
	When the valuation $w$ is tacitly understood, we will drop the suffix and simply write it as $j(f)$. 
\end{Definition}

The $j$-invariant was introduced by the first author in [\ref{Dutta min pairs inertia ram deg impl const fields}] and later studied in detail in [\ref{Dutta invariant of valn tr extns and connection with key pols}] and [\ref{Dutta Ghosh defectless unibranched extn complete distinguished chains}]. It provides a measure of how far the Implicit Constant Field is from attaining the upper bound in Theorem \ref{Thm bound of ICF}.  

\begin{Theorem}[\ref{Dutta min pairs inertia ram deg impl const fields}, Theorem 1.1 and \ref{Dutta Ghosh defectless unibranched extn complete distinguished chains}, Proposition 2.5] \label{Thm j = [K(a):ICF] min pair of defn} 
	Take a minimal pair of definition $(a,\g)$ for $w$ over $K$ and take the minimal polynomial $Q(X)$ of $a$ over $K$. Then $d(K(a,X)^h|K(X)^h, w) = 1$. Moreover, 
	\[ j(Q) = [K(a):IC_K(w)].   \]
\end{Theorem}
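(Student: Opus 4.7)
Write $F := K(X)^h$ and $L := IC_K(w)$, so by definition $L = F \cap \overline{K}$, and by Theorem \ref{Thm bound of ICF}, $L \subseteq K(a)$. Let $Q_1 \in L[X]$ be the $L$-irreducible factor of $Q$ having $a$ as a root, so $\deg Q_1 = [K(a):L]$. My strategy is to identify $K(a,X)^h$ with the field $F(a)$, match the $L$-conjugates of $a$ with the roots of $Q$ contributing to $j(Q)$, and then leverage the minimality of the pair of definition to control the defect.

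The first step is to identify the henselization $K(a,X)^h = F(a)$. Since $F(a)$ is algebraic over the henselian field $F$, it is henselian, and it contains $K(a,X)$, whence $K(a,X)^h \subseteq F(a)$. Conversely, $K(a,X)^h$ is henselian, contains $K(X)$ (hence $F$), and contains $a$, so $F(a) \subseteq K(a,X)^h$. Next, I would show that $Q_1$ remains irreducible over $F$: any monic irreducible factor of $Q$ over $F$ has coefficients which are symmetric functions of a subset of the conjugates of $a$, hence algebraic over $K$; since they lie in $F$, they must lie in $F \cap \overline{K} = L$. Thus the $F$-irreducible factorization of $Q$ coincides with its $L$-irreducible factorization, and $[F(a):F] = \deg Q_1 = [K(a):L]$.

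The central step is the identification $j(Q) = [K(a):L]$, which reduces to proving the set equality
\[
\{a_i : v(a-a_i) \geq \gamma\} = \{\sigma(a) : \sigma \in \Gal(\overline{K}|L)\}.
\]
For the inclusion ``$\supseteq$'', given $\sigma \in \Gal(\overline{K}|L)$, extend $\sigma$ to an automorphism of $\overline{F}$ fixing $F$ pointwise (possible because $L = F \cap \overline{K}$); as $F$ is henselian this extension preserves $w$, and applying it to the identity $w(X-a) = \gamma$ yields $w(X - \sigma(a)) = \min(\gamma, v(a - \sigma(a))) = \gamma$, whence $v(a - \sigma(a)) \geq \gamma$. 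For ``$\subseteq$'', a short case analysis using the ultrametric inequality (splitting on whether $v(a-z) < \gamma$ or not and invoking $v(a-a_i)\geq \gamma$) shows that $v_{a,\gamma}(X - z) = v_{a_i,\gamma}(X - z)$ for every $z \in \overline{K}$, so the two pair valuations coincide on $\overline{K}(X)$. The $K$-automorphism $\tau$ of $\overline{K}(X)$ fixing $X$ and sending $a \mapsto a_i$ then preserves $w$, extends continuously to the henselization fixing $F$, and therefore fixes $L$; hence $a_i$ is an $L$-conjugate of $a$.

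Combining the previous steps gives
\[
[K(a,X)^h : K(X)^h] = [F(a):F] = [K(a):L] = j(Q),
\]
so the second assertion of the theorem is established. For the defectlessness assertion $d(K(a,X)^h | K(X)^h, w) = 1$, I would compute $w(Q_1) = \sum_{a'} \min(\gamma, v(a-a')) = j(Q)\,\gamma$, where the sum ranges over the $L$-conjugates of $a$; this places $j(Q)\,\gamma$ in $wF$, while $\gamma = w(X-a)$ lies in $wF(a)$, accounting for the ramification contribution. Combined with a parallel description of the residue field extension $F(a)w | Fw$ based on the fact that $(a,\gamma)$ remains a minimal pair of definition over $L$ by Lemma \ref{Lemma min pair of defn over K(z)}, one obtains $(wF(a):wF)\cdot[F(a)w:Fw] = [F(a):F]$. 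The main obstacle will be the precise bookkeeping in this final ramification--residue analysis; the earlier identification and counting steps are essentially formal consequences of henselianity together with the minimality hypothesis.
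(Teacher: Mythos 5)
The paper does not prove this statement itself but cites it from [\ref{Dutta min pairs inertia ram deg impl const fields}] and [\ref{Dutta Ghosh defectless unibranched extn complete distinguished chains}], so there is no in-paper proof to compare your argument against; I will assess it on its own merits.

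Your argument for the second assertion, $j(Q) = [K(a):IC_K(w)]$, is correct. The identification $K(a,X)^h = F(a)$ with $F:=K(X)^h$, the descent of the $F$-irreducible factorization of $Q$ to $L:=IC_K(w)=F\cap\overline K$ (giving $[F(a):F]=[K(a):L]$), and the matching of the optimizing roots $\{a_i : v(a-a_i)\geq\gamma\}$ with the $L$-conjugates of $a$ are all sound. In the ``$\supseteq$'' direction, the surjectivity of the restriction $\Gal(\overline F|F)\to\Gal(\overline K|L)$ that you invoke does hold, since $K^\sep\cap F=L$ and $K^\sep.F|F$ is Galois with $\Gal(K^\sep.F|F)\cong\Gal(K^\sep|L)$; in the ``$\subseteq$'' direction the key points are that $(a_i,\gamma)$ is also a pair of definition and that a $w$-preserving $K(X)$-automorphism of $\overline{K(X)}$ lies in the decomposition group and hence fixes $F=K(X)^h$ pointwise — both are correct, though you could spell out the conjugation step used to produce the $w$-preserving extension to $\overline{K(X)}$.

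The defectlessness assertion $d(K(a,X)^h|K(X)^h,w)=1$, however, is not proved by your proposal; this is a genuine gap, which you yourself acknowledge. Observing that $w(Q_1)=j(Q)\gamma\in wF$ while $\gamma\in wF(a)$ only tells you that the order of $\gamma$ in $wF(a)/wF$ \emph{divides} $j(Q)$. That gives an upper bound on $(wF(a):wF)$, whereas what you need is the lower bound $(wF(a):wF)\cdot[F(a)w:Fw]\geq[F(a):F]$ (the opposite inequality being the Fundamental Inequality). To close this, one has to invoke the precise structure of the value group and residue field of a valuation-transcendental extension with a minimal pair of definition — e.g.\ $wK(X)=vK(a)+\ZZ\,wQ$, $wK(a)(X)=vK(a)+\ZZ\gamma$, and the analogous residue-field descriptions — and then compute the product $(wF(a):wF)\cdot[F(a)w:Fw]$ directly, using that $wF=wK(X)$, $wF(a)=wK(a)(X)$, etc.\ by immediacy of henselization. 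Without this, the ramification--residue ``bookkeeping'' is not merely delicate but missing, and the first assertion remains unestablished.
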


Under certain specific conditions, we can have the same conclusions even if we forego minimality of the pair of definition in Theorem \ref{Thm j = [K(a):ICF] min pair of defn}. In this direction, we mention the following result which follows from [\ref{Dutta Ghosh defectless unibranched extn complete distinguished chains}, Lemma 3.1, Corollary 2.8 and Theorem 1.5]:

\begin{Theorem}\label{Thm j = [K(a):ICF] key pol}
	Assume that $(a,a_1)$ is a distinguished pair. Take some $\g\geq \d_K(a)$ and set $w:= v_{a,\g}$. Take the minimal polynomial $Q(X)$ of $a$ over $K$. Then $d(K(a,X)^h|K(X)^h,w) = 1$ and 
	\[ j(Q) = [K(a):IC_K(w)].  \]
\end{Theorem}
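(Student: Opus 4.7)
The plan is to split the proof according to whether the inequality $\g \geq \d_K(a)$ is strict, since the strict case is immediate while the borderline case requires a transfer argument.

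If $\g > \d_K(a)$, then the second assertion of Proposition \ref{Prop dist pair min pair} tells us that $(a,\g)$ is itself a minimal pair of definition for $w = v_{a,\g}$ over $K$. A single application of Theorem \ref{Thm j = [K(a):ICF] min pair of defn} to this minimal pair and to the minimal polynomial $Q$ delivers both conclusions.

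The substantive case is therefore $\g = \d_K(a) = v(a - a_1)$, where the pair $(a,\g)$ need not be minimal. Here the first assertion of Proposition \ref{Prop dist pair min pair} tells us that $(a_1,\g)$ is a minimal pair of definition for the same valuation $w$ over $K$. Writing $Q_1$ for the minimal polynomial of $a_1$ over $K$, a direct application of Theorem \ref{Thm j = [K(a):ICF] min pair of defn} yields
\[ d(K(a_1,X)^h \mid K(X)^h, w) = 1 \quad\text{and}\quad j(Q_1) = [K(a_1) : IC_K(w)]. \]
The task is then to lift both equalities from $(a_1,Q_1)$ to $(a,Q)$. For the defect, I would consider the tower $K(X)^h \subseteq K(a_1,X)^h \subseteq K(a,X)^h$: multiplicativity of the defect together with the above reduces the question to the defect of the upper stage, which through the distinguished pair hypothesis is comparable to the algebraic extension $K(a)\mid K(a_1)$; the value-group and residue contributions pinned down by $w$ then force the upper stage to be defectless. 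For the root count I would use the ultrametric identity $v(a-z)\geq \g \iff v(a_1-z)\geq \g$, valid since $v(a-a_1)=\g$, which converts the count of conjugates of $a$ near $a$ into a count over the $\g$-ball around $a_1$. The minimality clause of the distinguished pair $(a,a_1)$ then ensures that each conjugate of $a_1$ in this ball is covered by precisely $[K(a):K(a_1)]$ conjugates of $a$, producing
\[ j(Q) = [K(a):K(a_1)]\cdot j(Q_1) = [K(a):K(a_1)] \cdot [K(a_1):IC_K(w)] = [K(a):IC_K(w)], \]
where the last equality uses the inclusion chain $IC_K(w) \subseteq K(a_1) \subseteq K(a)$, the first coming from Theorem \ref{Thm bound of ICF} applied to the minimal pair $(a_1,\g)$ and the second being the standard feature of distinguished pairs over a henselian base.

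The main obstacle will be the fibre-count step in Case 2: one must rule out any extra $K$-conjugates of $a$ lying in the $\g$-ball around $a$ beyond those projecting onto conjugates of $a_1$ inside the corresponding $\g$-ball around $a_1$. The minimality clause of the distinguished pair definition, namely that $a_1$ has the least $K$-degree among elements realising $v(a-a_1) = \d_K(a)$, is precisely the hypothesis that excludes such strays, and combined with the already-established defect triviality of the upper stage, the remaining algebra is routine and reduces to the three cited results of [Dutta Ghosh].
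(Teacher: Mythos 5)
Your case split is the right first move, and Case 1 ($\g > \d_K(a)$) is correct: Proposition \ref{Prop dist pair min pair} gives minimality of $(a,\g)$, and Theorem \ref{Thm j = [K(a):ICF] min pair of defn} then delivers both conclusions. Note, though, that the paper offers no internal proof of this theorem: it is quoted as following from three results of [\ref{Dutta Ghosh defectless unibranched extn complete distinguished chains}], so your attempt to rebuild it from the present paper's lemmas is a genuinely different and more ambitious route.

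The transfer step in Case 2, however, contains a gap that I do not think is routine. Both the fibre count $j(Q) = [K(a):K(a_1)]\cdot j(Q_1)$ and the telescoping $[K(a):K(a_1)]\cdot[K(a_1):IC_K(w)] = [K(a):IC_K(w)]$ require the inclusion $K(a_1)\subseteq K(a)$, which you assert as a ``standard feature of distinguished pairs over a henselian base.'' It is not one: the axioms (DP1)--(DP3) only ask that $a_1$ realize $\d_K(a)$ with minimal degree, and nothing places $a_1$ in $K(a)$. In the paper the containment $K(a_{i+1})\subsetneq K(a_i)$ is established only in the tame case (Theorem \ref{Thm tame S_K(a)}), where it comes from Krasner's Lemma via $\d_K(a_1)=\o_K(a_1)$, an identity that fails outside the tame setting. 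Without $\Gal(\overline K|K(a))\subseteq\Gal(\overline K|K(a_1))$ the map from $K$-conjugates of $a$ to $K$-conjugates of $a_1$ underlying your fibre count is not well-defined, so the count is not simply a product. The defect argument stalls for a parallel reason: multiplicativity reduces it to $d(K(a,X)^h|K(a_1,X)^h,w)=1$, but $(a,\g)$ is \emph{not} a minimal pair of definition over $K(a_1)$ (the element $a_1$ itself realizes the valuation with degree one), so Theorem \ref{Thm j = [K(a):ICF] min pair of defn} does not apply to the top stage of your tower, and ``the value-group and residue contributions pinned down by $w$'' is an aspiration rather than an argument. These two transfer steps are precisely the content that the paper outsources to [\ref{Dutta Ghosh defectless unibranched extn complete distinguished chains}]; invoking those cited results at the very end, as you do, concedes that the intermediate reasoning has not actually closed the gap.
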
 


\section{Notations}\label{Sect Notations}

We now fix some notations which will be used throughout Sections 4--8. We assume that $(K(a)|K,v)$ is a non-trivial separable extension of henselian valued fields. Fix an Okutsu sequence $A_0 = \{a\},A_1, \dotsc , A_n$ of $a$ over $K$ and set $d_i:= \deg_K(A_i)$ for all $i$. Moreover, whenever $\#A_i = 1$, we take $a_i$ such that $A_i = \{a_i\}$. Thus if $(K(a)|K,v)$ is defectless, we have a complete distinguished chain $a,a_1, \dotsc , a_n$ of $a$ over $K$. Observe that
\[ D_{d_1} = v(a-K) \text{ whenever $a$ is pure over $K$}.   \] 

\pars Take the minimal polynomial $Q(X)$ of $a$ over $K$. Set 
\[ \text{$\g:= \o_K(a)$ and $w:= v_{a,\g}$.}    \]
Fix an extension of $w$ to $\overline{K(X)}$. It has been observed in (\ref{Eqn 1}) that $D_{d_1} \leq \g$. As a consequence, it follows from Proposition \ref{Prop dist pair min pair} that $(a,\g)$ is a minimal pair of definition for $w$ over $K$ whenever $\g > D_{d_1}$. Otherwise, $\g = \max D_{d_1} = v(a-a_1)$. In this case $(a,a_1)$ is a distinguished pair over $K$ and $(a_1,\g)$ is a minimal pair of definition for $w$ over $K$. Furthermore, observe that
\[ \# S_K(a) = 1 \text{ if and only if } j(Q) = \deg Q.  \]
In light of Theorem \ref{Thm j = [K(a):ICF] key pol}, we conclude that
\begin{equation}\label{Eqn 2}
	\# S_K(a) = 1 \text{ if and only if } IC_K(w) = K. 
\end{equation}

We combine the preceding observations in the next result:

\begin{Theorem}\label{Thm S_K(a) = 1 iff condns} 
	We have that
	\[ \#S_K(a) = 1 \Longleftrightarrow j(Q) = \deg Q \Longleftrightarrow IC_K(w) = K.  \]
\end{Theorem}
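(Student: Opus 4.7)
The plan is to prove the two equivalences separately, both essentially by synthesizing the observations made in the paragraphs immediately preceding the theorem statement.

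For the first equivalence $\#S_K(a) = 1 \Longleftrightarrow j(Q) = \deg Q$, I would proceed directly from the definitions. Writing $Q(X) = \prod_{i=1}^{\deg Q}(X - a^{(i)})$ over $\overline{K}$, where $a^{(1)}, \ldots, a^{(\deg Q)}$ are the $K$-conjugates of $a$ (with $a^{(1)} = a$ say), the definition gives
\[ j(Q) = \#\{i : v(a - a^{(i)}) \geq \gamma\}. \]
Since $\gamma = \omega_K(a) = \max S_K(a)$, for every conjugate $a^{(i)} \neq a$ we have $v(a - a^{(i)}) \leq \gamma$, so the condition $v(a - a^{(i)}) \geq \gamma$ is equivalent to $v(a - a^{(i)}) = \gamma$ (the index $i = 1$ contributes since $v(a-a) = \infty$). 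Therefore $j(Q) = \deg Q$ precisely when every conjugate $a' \neq a$ satisfies $v(a-a') = \gamma$, which is exactly the condition $\#S_K(a) = 1$.

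For the second equivalence $j(Q) = \deg Q \Longleftrightarrow IC_K(w) = K$, I would leverage the case dichotomy already set up in Section \ref{Sect Notations}. There are two situations: either $\gamma > D_{d_1}$, in which case $(a,\gamma)$ is a minimal pair of definition for $w$ over $K$ and Theorem \ref{Thm j = [K(a):ICF] min pair of defn} applies directly to give $j(Q) = [K(a) : IC_K(w)]$; or $\gamma = \max D_{d_1} = v(a - a_1)$, in which case $(a, a_1)$ forms a distinguished pair over $K$ with $\delta_K(a) = \gamma$, and Theorem \ref{Thm j = [K(a):ICF] key pol} applies (with the trivial choice of equality in $\gamma \geq \delta_K(a)$) to yield the same identity $j(Q) = [K(a) : IC_K(w)]$.

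Having established $j(Q) = [K(a) : IC_K(w)]$ in both cases, the conclusion follows from Theorem \ref{Thm bound of ICF}, which asserts $K \subseteq IC_K(w) \subseteq K(a)$. Indeed, $j(Q) = \deg Q = [K(a):K]$ forces $[K(a):IC_K(w)] = [K(a):K]$, hence $IC_K(w) = K$; conversely, $IC_K(w) = K$ gives $[K(a):IC_K(w)] = \deg Q = j(Q)$. The only subtle point — and really the only place where anything non-routine happens — is confirming that the correct version of the $j = [K(a):IC_K(w)]$ identity is available in the boundary case $\gamma = v(a-a_1)$, since here the minimal pair of definition is $(a_1, \gamma)$ rather than $(a, \gamma)$; this is precisely why Theorem \ref{Thm j = [K(a):ICF] key pol}, which permits the $j$-invariant of the minimal polynomial of $a$ (not $a_1$) to be computed, is indispensable.
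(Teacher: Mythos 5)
Your proof is correct and follows essentially the same route as the paper, which presents this theorem as a direct synthesis of the observations made in Section \ref{Sect Notations}. You spell out the two steps the paper leaves implicit: the elementary unwinding of the $j$-invariant definition for the first equivalence, and the case split ($\g > D_{d_1}$ versus $\g = \max D_{d_1}$) invoking Theorem \ref{Thm j = [K(a):ICF] min pair of defn} or Theorem \ref{Thm j = [K(a):ICF] key pol} as appropriate, followed by the sandwich $K \subseteq IC_K(w) \subseteq K(a)$ from Theorem \ref{Thm bound of ICF}. Your closing remark — that Theorem \ref{Thm j = [K(a):ICF] key pol} is needed precisely because in the boundary case the minimal pair features $a_1$ rather than $a$, while the $j$-invariant one must compute is still that of $Q = \mathrm{minpoly}_K(a)$ — correctly pinpoints the one genuinely delicate step, and is in fact slightly more careful than the paper's own citation, which names only Theorem \ref{Thm j = [K(a):ICF] key pol} without distinguishing the sub-case where $D_{d_1}$ has no maximum.
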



\section{The tame case} 

When $(K(a)|K,v)$ is a tame extension, then Questions \ref{Qn characterize S_K(a)=1} and \ref{Qn pure implies S_K(a)=1} have very satisfactory answers.

\begin{Theorem}\label{Thm tame S_K(a)}
	$(K(a)|K,v)$ is a tame extension if and only if $\d_K(a_i) = \o_K(a_i)$ for all $i$. 
	\newline In this case, $K(a_i) \supsetneq K(a_{i+1})$ for all $i$. Moreover, $S_K(a) = \{ \d_K(a), \d_K(a_1), \dotsc , \d_K(a_{n-1}) \}$ and hence $\#S_K(a) = \ell(a)$.
\end{Theorem}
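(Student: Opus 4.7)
My plan is to argue the equivalence by induction on the depth $n = \ell(a)$, using the setup and notation of Section~\ref{Sect Notations}. I first handle the top equality in the forward direction. Set $\gamma := \omega_K(a)$ and $w := v_{a,\gamma}$, and suppose for contradiction that $\omega_K(a) > \delta_K(a)$. Then, by Section~\ref{Sect Notations}, $(a,\gamma)$ is a minimal pair of definition for $w$ over $K$. Tameness forces $K(a) \subseteq K^r$, so that $L := K(a) \cap K^r = K(a)$; Theorem~\ref{Thm bound of ICF} then gives $IC_K(w) = K(a)$, and Theorem~\ref{Thm j = [K(a):ICF] min pair of defn} yields $j(Q) = [K(a):IC_K(w)] = 1$. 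However, $\gamma = \omega_K(a)$ is realized by some conjugate $a' \neq a$, so both $a$ and $a'$ are roots of $Q$ with $v(a - z) \geq \gamma$, forcing $j(Q) \geq 2$, a contradiction. Hence $\delta_K(a) = \omega_K(a) = v(a - a_1)$.

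With the top equality established, the next step is a tame-descent argument: show that the Okutsu representative $a_1$ may be taken in $K(a) \subseteq K^r$, so that $(K(a_1)|K,v)$ is tame of depth strictly less than $n$. The argument invokes Theorem~\ref{Thm j = [K(a):ICF] key pol} for the distinguished pair $(a,a_1)$ with $\gamma = \delta_K(a)$, combined with Theorem~\ref{Thm bound of ICF} and Lemma~\ref{Lemma tame purely wild valn disjoint}, to identify $IC_K(w)$ with $K(a_1)$ inside $K^r$; this produces both $K(a_1) \subseteq K^r$ and the strict inclusion $K(a_1) \subsetneq K(a)$ by degree. The inductive hypothesis applied to $a_1$ then propagates the equality $\delta_K(a_i) = \omega_K(a_i)$ and the chain $K(a_i) \supsetneq K(a_{i+1})$ down to every level. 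To describe $S_K(a)$: given $\sigma \in \Gal(\overline{K}/K)$ with $\sigma a \neq a$, let $j$ be the least index with $\sigma a_j \neq a_j$ (so $0 \leq j \leq n-1$ since $a_n \in K$). Since $\sigma$ fixes $a_{j+1}$, it lies in $\Gal(\overline{K}/K(a_{j+1}))$, and the depth-one case applied over $K(a_{j+1})$ to the tame sub-extension $(K(a_j)|K(a_{j+1}),v)$ yields $v(\sigma a_j - a_j) = v(a_j - a_{j+1}) = \delta_K(a_j)$. The telescoping identity
\[
a - \sigma a \;=\; \sum_{i=0}^{j-1}\bigl((a_i - a_{i+1}) - (\sigma a_i - \sigma a_{i+1})\bigr) \;+\; (a_j - \sigma a_j),
\]
together with Galois-invariance of $v$ and the strict decrease $\delta_K(a_0) > \delta_K(a_1) > \cdots > \delta_K(a_{n-1})$, then forces $v(a - \sigma a) = \delta_K(a_j)$. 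Conversely, the strict chain $K(a_j) \supsetneq K(a_{j+1})$ supplies a Galois element realizing each $\delta_K(a_j)$, giving $S_K(a) = \{\delta_K(a_i)\}_{i=0}^{n-1}$ and $\#S_K(a) = \ell(a)$.

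For the converse, assume $\delta_K(a_i) = \omega_K(a_i)$ for every $i$. Since $a_{i+1} \in K(a_{i+1})$ is a witness of strictly smaller degree, the chain of inequalities $\delta_{K(a_{i+1})}(a_i) \geq v(a_i - a_{i+1}) = \omega_K(a_i) \geq \omega_{K(a_{i+1})}(a_i) \geq \delta_{K(a_{i+1})}(a_i)$ forces $\omega_{K(a_{i+1})}(a_i) = v(a_i - a_{i+1})$. Lemma~\ref{Lemma va = kras(a) tame} applied over the base field $K(a_{i+1})$ with $d = a_{i+1}$ then shows $(K(a_i,a_{i+1})|K(a_{i+1}),v)$ is tame for each $i$, and by transitivity of tameness the compositum $K(a_0,a_1,\ldots,a_n)|K$ is tame, whence so is its sub-extension $(K(a)|K,v)$. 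The main technical obstacle is the tame-descent step in the forward direction, namely guaranteeing that $a_1$ (and, iteratively, each $a_i$) can be chosen inside $K^r$; this is where the interaction between Theorem~\ref{Thm j = [K(a):ICF] key pol}, the bounds of Theorem~\ref{Thm bound of ICF} on the implicit constant field, and the structure of the maximal tame extension must be exploited with care.
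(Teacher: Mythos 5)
Your proof of the first claim in the forward direction, namely $\delta_K(a) = \omega_K(a)$, is correct and coincides with the paper's: tameness gives $L = K(a)$, so Theorems~\ref{Thm bound of ICF} and \ref{Thm j = [K(a):ICF] min pair of defn} force $j(Q)=1$, contradicting the realization of $\omega_K(a)$. The converse direction and the description of $S_K(a)$ are also fine in spirit: the chain of inequalities $\delta_{K(a_{i+1})}(a_i) \geq v(a_i - a_{i+1}) = \omega_K(a_i) \geq \omega_{K(a_{i+1})}(a_i) \geq \delta_{K(a_{i+1})}(a_i)$ feeding into Lemma~\ref{Lemma va = kras(a) tame} is exactly the paper's mechanism (stated there as an induction on $\ell(a)$), and your telescoping identity gives a direct proof of the description of $S_K(a)$ that the paper merely cites. (One slip there: you want $j$ to be the \emph{greatest} index with $\sigma a_j \neq a_j$, not the least, since you need $\sigma$ to fix $a_{j+1}$.)

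The genuine gap is in the tame-descent step. You propose to show $(K(a_1)|K,v)$ is tame by ``identifying $IC_K(w)$ with $K(a_1)$ inside $K^r$'' via Theorem~\ref{Thm j = [K(a):ICF] key pol}, Theorem~\ref{Thm bound of ICF} and Lemma~\ref{Lemma tame purely wild valn disjoint}. But Theorem~\ref{Thm bound of ICF}, applied to the minimal pair $(a_1,\gamma)$, yields only $K(a_1)\cap K^r \subseteq IC_K(w) \subseteq K(a_1)$; to upgrade this to $IC_K(w)=K(a_1)\subseteq K^r$ you would need to already know $K(a_1)\subseteq K^r$, which is precisely the claim. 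The argument as outlined is circular, and it is not clear Lemma~\ref{Lemma tame purely wild valn disjoint} can break the circle. The paper instead deduces tameness of $(K(a_i)|K,v)$ from Lemma~\ref{Lemma vK(a) < vK(b) and K(a)v < K(b)v} (the minimal pair $(a_1,\gamma)$ paired with the pair $(a,\gamma)$ gives $vK(a_1)\subseteq vK(a)$ and $K(a_1)v\subseteq K(a)v$, inheriting the prime-to-$p$ index and separability from the tame extension $K(a)$) together with Theorem~\ref{Thm cdc Khanduja-Aghigh} (the truncated chain $a_1,\ldots,a_n$ is a complete distinguished chain for $a_1$, so $(K(a_1)|K,v)$ is defectless). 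A second smaller issue: the strict inclusion $K(a_{i+1})\subsetneq K(a_i)$ cannot be obtained ``by degree'' alone; the inclusion itself comes from Krasner's Lemma~\ref{Lemma Krasner} applied to $v(a_{i+1}-a_i)=\delta_K(a_i)>\delta_K(a_{i+1})=\omega_K(a_{i+1})$, and only the strictness is then a degree count.
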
 

\begin{proof}
	We first assume that $(K(a)|K,v)$ is a tame extension. Suppose that $\d_K(a) < \o_K(a) = \g$. Then $(a,\g)$ is a minimal pair of definition for $w$ over $K$. By Theorem \ref{Thm bound of ICF} we have that $IC_K(w) = K(a)$ and hence $j(Q) = 1$ by Theorem \ref{Thm j = [K(a):ICF] min pair of defn}. However this is not possible since $j(Q)>1$ by the definition of $\g$. It follows that $\d_K(a) = \g$. Now observe from Lemma \ref{Lemma vK(a) < vK(b) and K(a)v < K(b)v} and Theorem \ref{Thm cdc Khanduja-Aghigh} that $(K(a_i)|K,v)$ is a tame extension for all $i$. The same arguments now yield that $\d_K(a_i) = \o_K(a_i)$ for all $i>0$.
	
	\pars We will prove the reverse direction by induction on $\ell(a)$. When $\ell(a)=1$, the assertion follows from Lemma \ref{Lemma va = kras(a) tame}. Now assume that $\ell(a) = n > 1$, $\d_K(a_i) = \o_K(a_i)$ for all $i$, and that the assertion holds true for all elements $b$ with $\ell(b) \leq n-1$ and satisfying the given conditions. Thus $(K(a_1)|K,v)$ is tame by the induction hypothesis. Observe that
	\[ \o_{K(a_1)}(a) \geq \d_{K(a_1)}(a) \geq v(a-a_1) = \o_K(a) \geq \o_{K(a_1)}(a).   \]
	As a consequence, $v(a-a_1) = \o_{K(a_1)}(a)$ and hence $(K(a,a_1)|K(a_1),v)$ is a tame extension by Lemma \ref{Lemma va = kras(a) tame}. The fact that $(K(a_1)|K,v)$ is a tame extension implies that $a_1 \in K^r$. It follows that $a\in K(a_1)^r = K^r.K(a_1) = K^r$ and hence $(K(a)|K,v)$ is a tame extension. 
	
	\pars The second assertion follows from Lemma \ref{Lemma Krasner}. The final assertion has been mentioned in [\ref{Novacoski distances of elts to conjugates}, Theorem 3.1], and appears multiple times in the literature.     
\end{proof}

In the rest of the paper, we will tackle the complementary case.


\section{The condition $\# S_K(a) = 1$} We first provide some necessary conditions for the condition $\#S_K(a) = 1$ to hold. Recall that we are in the setup of Section \ref{Sect Notations}. 

\begin{Proposition}\label{Prop S_K(a) = 1 necessary condns}
	Assume that $\# S_K(a) = 1$. Set $L:= K(a)\sect K^r$. Then the following cases are possible:
	\sn (i) $D_{d_1} < \o_K(a)$. Then $(K(a)|K,v)$ is a purely wild extension. 
	\n (ii) $\max D_{d_1} = \o_K(a)$. Then $(K(a_1)|K,v)$ is a purely wild extension. Moreover,
	\[  \deg_K(a) = \deg_K(a_1)[L:K], \, vK(a) = vL + vK(a_1), \, K(a)v = Lv.K(a_1)v.   \]
\end{Proposition}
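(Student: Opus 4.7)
The plan is to reduce both cases to the identity $IC_K(w) = K$, which is equivalent to $\#S_K(a) = 1$ by Theorem \ref{Thm S_K(a) = 1 iff condns}; here $w = v_{a,\g}$ with $\g = \o_K(a)$. In case (i), where $D_{d_1} < \g$, the pair $(a,\g)$ is a minimal pair of definition for $w$ over $K$ by Proposition \ref{Prop dist pair min pair}, so Theorem \ref{Thm bound of ICF} gives $L = K(a)\sect K^r \subseteq IC_K(w) = K$, forcing $L = K$ and hence $(K(a)|K,v)$ purely wild. In case (ii), where $\max D_{d_1} = \g = v(a-a_1)$, the distinguished pair $(a,a_1)$ yields the minimal pair of definition $(a_1,\g)$ over $K$, and Theorem \ref{Thm bound of ICF} analogously produces $K(a_1)\sect K^r \subseteq IC_K(w) = K$, so $(K(a_1)|K,v)$ is purely wild.

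For the three structural equalities in case (ii), I would first apply Lemma \ref{Lemma vK(a) < vK(b) and K(a)v < K(b)v} to the minimal pair $(a_1,\g)$ together with the pair of definition $(a,\g)$ to deduce $vK(a_1) \subseteq vK(a)$ and $K(a_1)v \subseteq K(a)v$. Together with $vL \subseteq vK(a)$ and $Lv \subseteq K(a)v$ coming from $L\subseteq K(a)$, these yield the inclusions $vL + vK(a_1) \subseteq vK(a)$ and $Lv\cdot K(a_1)v \subseteq K(a)v$. Next, Lemma \ref{Lemma tame purely wild valn disjoint} applied to the tame extension $L|K$ and the purely wild extension $K(a_1)|K$ supplies the linear disjointness $[L\cdot K(a_1):K] = [L:K]\cdot d_1$ together with $v(L\cdot K(a_1)) = vL + vK(a_1)$ and $(L\cdot K(a_1))v = Lv\cdot K(a_1)v$. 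At this stage, establishing the single identity $K(a) = L\cdot K(a_1)$ would immediately deliver all three desired equalities.

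The main obstacle is precisely this last identification. Since $v(a-a_1) = \g = \o_K(a)$ coincides with the Krasner constant, Krasner's Lemma (Lemma \ref{Lemma Krasner}) cannot be invoked directly to place $a$ inside $L\cdot K(a_1)$ or $a_1$ inside $K(a)$. My approach would be to work over $L$ and exploit the ICF machinery: the upper bound in Theorem \ref{Thm bound of ICF} gives $IC_L(w) \subseteq L\cdot K(a_1)$, and the defectless statement $d(K(a,X)^h|K(X)^h,w) = 1$ from Theorem \ref{Thm j = [K(a):ICF] key pol} should transfer through the tower $K(X)^h \subseteq L(X)^h \subseteq K(a,X)^h$ to a defect-free statement over $L$. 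Combining this with a $j$-invariant computation for the minimal polynomial of $a$ over $L$, and using that every $L$-conjugate $z$ of $a$ satisfies $v(a_1 - z) \geq \g$ (a consequence of $\#S_K(a)=1$ together with the ultrametric inequality applied to $v(a-z) = \g$ and $v(a-a_1) = \g$), should force $[K(a) : L\cdot K(a_1)] = 1$. The delicate point will be identifying an appropriate minimal pair of definition for $w$ over $L$ so that Theorem \ref{Thm j = [K(a):ICF] key pol} or Theorem \ref{Thm j = [K(a):ICF] min pair of defn} applies cleanly in the $L$-setting and yields the required degree equality.
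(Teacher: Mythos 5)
The first half of your argument — establishing that $L=K$ in case (i) and $K(a_1)\sect K^r = K$ in case (ii) via $IC_K(w)=K$ and the lower bound of Theorem \ref{Thm bound of ICF} — is correct and coincides exactly with the paper.

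For the three structural identities in case (ii), however, your proposal departs from the paper's route and leaves the crucial step unfinished. You aim to prove the single identity $K(a) = L\cdot K(a_1)$ and deduce everything from Lemma \ref{Lemma tame purely wild valn disjoint}. This would indeed suffice, but the sketch you give for it — transferring a defectlessness statement through the henselization tower, combined with a $j$-invariant count over $L$ — is explicitly conjectural (``should force'', ``the delicate point will be''). In particular, applying Theorem \ref{Thm j = [K(a):ICF] key pol} over $L$ presupposes that $(a,a_1)$ is a distinguished pair over $L$, which is not automatic and which you do not verify; and even granting the inclusion $IC_L(w)\subseteq L\cdot K(a_1)$, there is no argument presented that closes the degree gap.

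The paper proceeds differently and avoids proving $K(a)=L(a_1)$ altogether. The essential ingredient you are missing is the defect identity $d(K(a)|K,v) = d(K(a_1)|K,v)$, valid because $(a,a_1)$ is a distinguished pair (this is drawn from the proof of [\ref{KA SKK main invaraint of elts alg over henselian field}, Theorem 1.1]). Combining this with the multiplicativity of the defect and Lemma \ref{Lemma tame purely wild valn disjoint} gives $d(L(a_1)|K,v)=d(K(a)|K,v)$, and then the Fundamental Inequality yields $[L(a_1):K]\leq [K(a):K]$. For the remaining case $a\notin L(a_1)$, the paper observes that $\#S_K(a)=1$ forces $\#S_{L(a_1)}(a)=1$ and $\o_{L(a_1)}(a)=\o_K(a)=v(a-a_1)$, so by Lemma \ref{Lemma va = kras(a) tame} the extension $(L(a_1,a)|L(a_1),v)$ is tame; the resulting chain
\[
vL \subseteq vL(a_1)\subseteq vK(a)\subseteq vL(a_1,a),
\]
together with $vK(a)/vL$ being a $p$-group and $(vL(a_1,a):vL(a_1))$ being coprime to $p$, sandwiches $vK(a)=vL(a_1)$, and analogously for residue fields. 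The degree formula then drops out from the defect equality. Your proposal never invokes the distinguished-pair defect identity and so cannot reproduce this count; without it, the argument does not close.
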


\begin{proof}
	We first assume that $D_{d_1} < \o_K(a) = \g$. Then $(a,\g)$ is a minimal pair of definition for $w$ over $K$. The first assertion now follows from Theorem \ref{Thm S_K(a) = 1 iff condns} and Theorem \ref{Thm bound of ICF}. 
	
	\pars We now assume that $\g = \max D_{d_1} = v(a-a_1)$. Then $(a_1, \g)$ is a minimal pair of definition for $w$ over $K$. The assertion that $(K(a_1)|K,v)$ is purely wild follows again from Theorems \ref{Thm S_K(a) = 1 iff condns} and \ref{Thm bound of ICF}. It remains to prove the final assertion. Employing Lemma \ref{Lemma tame purely wild valn disjoint} and Lemma \ref{Lemma vK(a) < vK(b) and K(a)v < K(b)v}, we obtain that
	\begin{align*}
		vL(a_1) &= vL + vK(a_1) \subseteq vK(a), \\
		L(a_1)v &= Lv.K(a_1)v \subseteq K(a)v.
	\end{align*}
It further follows from Lemma \ref{Lemma tame purely wild valn disjoint} and the multiplicative property of the defect that $d(L(a_1)|K,v) = d(K(a_1)|K,v)$. The Fundamental Inequality then gives us that 
\[ [L(a_1):K] = (vL(a_1):vK)[L(a_1)v:Kv]d(K(a_1)|K,v) \leq (vK(a):vK)[K(a)v:Kv]d(K(a_1)|K,v).  \]
Since $(a,a_1)$ is a distinguished pair, it has been observed in the proof of [\ref{KA SKK main invaraint of elts alg over henselian field}, Theorem 1.1] that $d(K(a)|K,v) = d(K(a_1)|K,v)$. As a consequence, 
\begin{equation}\label{Eqn 3}
	[L(a_1):K] \leq [K(a):K].
\end{equation}
Hence we are done if $a\in L(a_1)$. Now assume that $a\notin L(a_1)$. The fact that $\#S_K(a) = 1$ implies that $\#S_{L(a_1)}(a) = 1$ and thus $\o_K(a) = \o_{L(a_1)}(a)$. Since $v(a-a_1) = \o_K(a)$, we conclude from Lemma \ref{Lemma va = kras(a) tame} that 
\begin{equation}\label{Eqn 4}
	(L(a_1,a)|L(a_1),v) \text{ is a tame extnesion.}
\end{equation}
We now consider the following chain of value groups:
\[ vL \subseteq vL(a_1) \subseteq vK(a) \subseteq vL(a_1,a).  \]
Now $(vL(a_1,a):vL(a_1))$ is coprime to $p$ by (\ref{Eqn 4}). On the other hand, observe that $(K(a)|L,v)$ is purely wild by definition and hence $vK(a)/vL$ is a $p$-group. It follows that
\[ vK(a) = vL(a_1).  \] 
Analogously, considering the chain of residue field extensions
\[  Lv \subseteq L(a_1)v \subseteq K(a)v \subseteq L(a_1,a)v,  \]
we conclude that
\[ K(a)v = L(a_1)v.  \]
As a consequence, it follows from (\ref{Eqn 3}), the fact that $d(K(a)|K,v) = d(K(a_1)|K,v)$ and Lemma \ref{Lemma tame purely wild valn disjoint} that
\[ \deg_K(a) = \deg_K(a_1)[L:K].  \]
\end{proof}

We can now provide a complete answer to Question \ref{Qn characterize S_K(a)=1} in the setup of \textit{minimal} extensions. 

\begin{Definition}
	An extension of fields $L|K$ is said to be \textbf{minimal} if it does not admit any proper non-trivial subextension.
\end{Definition}

\begin{Theorem}\label{Thm S_K(a)=1 minimal extn}
	Assume that $K(a)|K$ is a minimal extension. Then $\# S_K(a) = 1$. 
\end{Theorem}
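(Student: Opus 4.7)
The plan is to invoke Theorem \ref{Thm S_K(a) = 1 iff condns}, which reduces the claim to showing $IC_K(w) = K$, where $w = v_{a,\gamma}$ and $\gamma = \omega_K(a)$. The strategy is to first place $IC_K(w)$ inside $K(a)$ in both cases of the dichotomy set up in Section \ref{Sect Notations}, and then exploit the minimality of $K(a)|K$, which forces $IC_K(w) \in \{K, K(a)\}$; a contradiction against $IC_K(w) = K(a)$ will finish the proof in each case.

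In the case $\gamma > D_{d_1}$, the pair $(a,\gamma)$ is a minimal pair of definition for $w$ over $K$, so Theorem \ref{Thm bound of ICF} directly yields $IC_K(w) \subseteq K(a)$. If $IC_K(w) = K(a)$, then Theorem \ref{Thm j = [K(a):ICF] min pair of defn} would force $j(Q) = 1$. But by the very definition of $\gamma = \omega_K(a)$, there exists a $K$-conjugate $a' \neq a$ of $a$ with $v(a - a') = \gamma$, so the roots $a$ and $a'$ of $Q$ both contribute to $j(Q)$, giving $j(Q) \geq 2$ and a contradiction.

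In the complementary case $\gamma = \max D_{d_1} = v(a-a_1)$, the pair $(a,a_1)$ is a distinguished pair over $K$ with $\delta_K(a) = \gamma$, and $(a_1,\gamma)$ is a minimal pair of definition for $w$ over $K$. Here I would invoke Theorem \ref{Thm j = [K(a):ICF] key pol} (rather than Theorem \ref{Thm bound of ICF} applied to $(a_1,\gamma)$), which gives the equality $j(Q) = [K(a):IC_K(w)]$ and in particular the containment $IC_K(w) \subseteq K(a)$. Minimality of $K(a)|K$ again leaves the options $IC_K(w) = K$ or $IC_K(w) = K(a)$. To rule out the second, combine with Theorem \ref{Thm bound of ICF} applied to the minimal pair $(a_1,\gamma)$, which gives $IC_K(w) \subseteq K(a_1)$; then $IC_K(w) = K(a)$ would yield $K(a) \subseteq K(a_1)$ and hence $\deg_K(a) \leq \deg_K(a_1) = d_1 < d_0 = \deg_K(a)$, a contradiction.

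The only subtle point I anticipate is in the second case: the \emph{a priori} bound $IC_K(w) \subseteq K(a_1)$ coming from Theorem \ref{Thm bound of ICF} does not by itself place $IC_K(w)$ inside $K(a)$, so minimality of $K(a)|K$ cannot be applied directly. Theorem \ref{Thm j = [K(a):ICF] key pol} is what supplies the crucial additional containment $IC_K(w) \subseteq K(a)$, and it is precisely the combination of the two bounds $IC_K(w) \subseteq K(a)$ and $IC_K(w) \subseteq K(a_1)$ that yields the desired contradiction via the degree comparison $d_1 < d_0$.
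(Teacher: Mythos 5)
Your proof is correct. It follows the same overall plan as the paper — reduce to showing $IC_K(w) = K$ via Theorem~\ref{Thm S_K(a) = 1 iff condns}, then exploit minimality — but the case analysis and the way each case is closed out are genuinely different, and in fact cleaner. The paper first splits on whether $(K(a)|K,v)$ is tame or purely wild (which minimality forces, since $L:=K(a)\cap K^r$ must be $K(a)$ or $K$). In the tame case it invokes the complete-distinguished-chain description from Theorem~\ref{Thm tame S_K(a)} to deduce $\ell(a)=1$ and hence $\#S_K(a)=\ell(a)=1$; in the purely wild case it supposes $\gamma=\max D_{d_1}$, derives $\#S_K(a)=1$ exactly as you do, but then goes on to invoke Proposition~\ref{Prop S_K(a) = 1 necessary condns} to show that this subcase is actually contradictory, falling back to $D_{d_1}<\gamma$ and rerunning the $j$-invariant argument there. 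Your proof bypasses the tame/purely wild split entirely and treats the dichotomy $\gamma>D_{d_1}$ versus $\gamma=\max D_{d_1}$ uniformly via the ICF machinery; this absorbs the tame case (which always lands in the second branch) without ever appealing to Theorem~\ref{Thm tame S_K(a)}, and avoids the paper's somewhat circuitous step of deriving the conclusion and then retroactively ruling out the branch.

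One small remark on your second case: you rule out $IC_K(w)=K(a)$ via the containment $IC_K(w)\subseteq K(a_1)$ from Theorem~\ref{Thm bound of ICF} and a degree comparison. That is valid, but there is a shorter route available that mirrors your first case: since $\gamma=\omega_K(a)$, there is a $K$-conjugate $a'\neq a$ with $v(a-a')=\gamma$, so both $a$ and $a'$ contribute to $j_w(Q)$ and $j(Q)\geq 2$; combined with $j(Q)=[K(a):IC_K(w)]$ from Theorem~\ref{Thm j = [K(a):ICF] key pol}, this already forces $IC_K(w)\neq K(a)$. Either argument works, and your observation that Theorem~\ref{Thm bound of ICF} applied to $(a_1,\gamma)$ alone would not place $IC_K(w)$ inside $K(a)$ — so that Theorem~\ref{Thm j = [K(a):ICF] key pol} is what is genuinely needed to invoke minimality — is the correct and central subtlety of this branch.
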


\begin{proof}
	Set $L:= K(a)\sect K^r$. The minimality of $K(a)|K$ implies that either $L = K(a)$ or $L=K$. In other words, either $(K(a)|K,v)$ is a tame extension, or it is purely wild. 
	
	\pars We first assume that $(K(a)|K,v)$ is a tame extension. Take a complete distinguished chain $a,a_1, \dotsc, a_n$ of $a$ over $K$. It follows from Theorem \ref{Thm tame S_K(a)} that $K(a_i) \subsetneq K(a)$ for all $i\geq 1$. The minimality of $K(a)|K$ then implies that $n=1$, i.e., $\ell(a) = 1$. Then $\# S_K(a) = 1$ by Theorem \ref{Thm tame S_K(a)}.
	
	\pars We now assume that $(K(a)|K,v)$ is purely wild. It follows from (\ref{Eqn 1}) that $D_{d_1} \leq \o_K(a) = \g$. Suppose if possible that $\g = \max D_{d_1}$. Then $A_1 = \{ a_1 \}$ where $(a,a_1)$ is a distinguished pair and $v(a-a_1) = \g$. Then $(a_1,\g)$ is a minimal pair of definition for $w:= v_{a,\g}$ over $K$. It follows from Theorem \ref{Thm j = [K(a):ICF] key pol} that $j(Q) = [K(a):IC_K(w)]$. Since $j(Q) > 1$ by the definition of $\g$, we conclude from the minimality of $K(a)|K$ that $IC_K(w) = K$. Thus $\# S_K(a) = 1$ by Theorem \ref{Thm S_K(a) = 1 iff condns}. Proposition \ref{Prop S_K(a) = 1 necessary condns} then yields that $\deg_K(a) = \deg_K(a_1)$ which is not possible. Thus we have $D_{d_1} < \g$. Hence $(a,\g)$ is a minimal pair of definition of $w$ over $K$. Thus $j(Q) = [K(a):IC_K(w)]$ by Theorem \ref{Thm j = [K(a):ICF] min pair of defn}. The same arguments again yield that $\# S_K(a) = 1$.
\end{proof}

\section{Analysis of pure elements}

\subsection{Initial results} A complete picture of pure elements is provided by the next proposition.

\begin{Proposition}\label{Prop pure elt implications}
	Assume that $a$ is pure over $K$. Then the following cases are possible:
	\sn (i) $\max v(a-K) = \o_K(a)$. Then $(K(a)|K,v)$ is tame and $\#S_K(a) = 1$.
	\n (ii) $v(a-K) < \o_K(a)$. Then there exists a proper subextension $F|K$ of $K(a)|K$ such that $\#S_F(a) = 1$.
\end{Proposition}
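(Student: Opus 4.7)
The plan is to handle the two cases separately, using the machinery of minimal pairs of definition and implicit constant fields developed in Sections \ref{Sect Prelims} and \ref{Sect Notations}.

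Case (i) is essentially immediate. The hypothesis $\max v(a-K) = \o_K(a)$ produces some $d \in K$ with $v(a-d) = \o_K(a)$, so Lemma \ref{Lemma va = kras(a) tame} yields that $(K(a)|K,v)$ is tame. Since $a$ is pure, $\ell(a) = 1$, and then Theorem \ref{Thm tame S_K(a)} gives $\#S_K(a) = \ell(a) = 1$.

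For case (ii), set $\g := \o_K(a)$ and $w := v_{a,\g}$. Because $a$ is pure we have $d_1 = 1$ and hence $D_{d_1} = v(a-K)$, so the hypothesis $v(a-K) < \g$ reads $D_{d_1} < \g$. By the discussion in Section \ref{Sect Notations}, $(a,\g)$ is then a minimal pair of definition for $w$ over $K$. I take $F := IC_K(w)$. By Theorem \ref{Thm j = [K(a):ICF] min pair of defn} one has $[K(a):F] = j(Q)$, and since $\g$ is attained by at least one $K$-conjugate of $a$ distinct from $a$, we have $j(Q) \geq 2$; hence $F \subsetneq K(a)$ and $F(a) = K(a)$, so $F|K$ is a proper subextension of $K(a)|K$.

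It remains to prove $\#S_F(a) = 1$, and the plan is to apply Theorem \ref{Thm j = [K(a):ICF] min pair of defn} a second time, this time over the base $F$. Since $F$ is a finite separable extension of $K$, it admits a primitive element $z \in F \subseteq K(a)$, and Lemma \ref{Lemma min pair of defn over K(z)} then ensures that $(a,\g)$ is still a minimal pair of definition for $w$ over $F$. The central step — and the one I expect will take the most care to phrase precisely — is the identity $IC_F(w) = F$. The key observation is that $F(X)^h = K(X)^h$: both fields are henselian, and each contains the other's generating field over $K(X)$ (note $F \subseteq K(X)^h$), so two applications of the universal property of the henselization yield equality. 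Using $\overline{F} = \overline{K}$, this gives
\[ IC_F(w) = \overline{F} \cap F(X)^h = \overline{K} \cap K(X)^h = IC_K(w) = F. \]
Theorem \ref{Thm j = [K(a):ICF] min pair of defn} applied over $F$ to the minimal polynomial $Q_F$ of $a$ over $F$ then yields $j_w(Q_F) = [F(a):F] = \deg Q_F$, so every $F$-conjugate of $a$ lies in the ball $\{z \in \overline{K} : v(a-z) \geq \g\}$. Since $F$-conjugates of $a$ form a subset of $K$-conjugates, one has $\o_F(a) \leq \o_K(a) = \g$, so each nontrivial such distance must equal $\g$, and therefore $\#S_F(a) = 1$.
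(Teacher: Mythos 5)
Your proof is correct and follows the same route as the paper's: case (i) via Lemma \ref{Lemma va = kras(a) tame} and Theorem \ref{Thm tame S_K(a)}, and case (ii) by setting $F=IC_K(w)$ with $w=v_{a,\g}$, showing $F(X)^h=K(X)^h$ so that $IC_F(w)=F$, and then applying Theorem \ref{Thm j = [K(a):ICF] min pair of defn} over $F$ to force every $F$-conjugate into the ball of radius $\g$. Your small extra precautions (writing $F=K(z)$ by the primitive element theorem before invoking Lemma \ref{Lemma min pair of defn over K(z)}, and deriving $F(X)^h=K(X)^h$ from the universal property rather than citing a reference) are sound and match the paper's intent.
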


\begin{proof}
	The first assertion follows from Lemma \ref{Lemma va = kras(a) tame} and Theorem \ref{Thm tame S_K(a)}. We now assume that $v(a-K) < \g = \o_K(a)$. Then $(a,\g)$ is a minimal pair of definition for $w$ over $K$. Set $F:= IC_K(w)$. Then $[K(a):F] = j(Q) > 1$ by Theorem \ref{Thm j = [K(a):ICF] min pair of defn} and hence $F|K$ is a proper subextension of $K(a)|K$. Observe that $(a,\g)$ is also a minimal pair of definition for $w$ over $F$ by Lemma \ref{Lemma min pair of defn over K(z)}. Take the minimal polynomial $\tilde{Q}$ of $a$ over $F$. Then $j(\tilde{Q}) = [K(a):IC_F(w)]$ by Theorem \ref{Thm j = [K(a):ICF] min pair of defn}. By definition, $F\subseteq K(X)^h$. It now follows from [\ref{Kuh vln model}, Theorem 5.10] that $F(X)^h = F. K(X)^h = K(X)^h$. Consequently, $IC_F(w) = F$. Thus
	\begin{equation}\label{Eqn 5}
		j(\tilde{Q}) = \deg_F(a) = \deg\tilde{Q}.
	\end{equation}
	It follows that $\g \leq \o_F(a)$. Since $\o_F(a) \leq \o_K(a) = \g$, we conclude that $\o_F(a) = \g$. Hence $\#S_F(a) = 1$ by (\ref{Eqn 5}).
\end{proof}

\begin{Proposition}
	Consider the setup of Case (ii) of Proposition \ref{Prop pure elt implications}. Take any subextension $F^\prime|K$ of $K(a)|K$ and assume that $\#S_{F^\prime}(a) = 1$. Then 
	\[ \text{either } F\subseteq F^\prime \text{ or } F.F^\prime = K(a).  \]
\end{Proposition}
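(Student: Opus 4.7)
The plan is to reformulate the statement in Galois-theoretic terms and compare the fixing subgroups of $F$ and $F'$ through their action on the conjugates of $a$. Let $N|K$ be a Galois closure of $K(a)|K$, and set $G:=\mathrm{Gal}(N|K)$, $H:=\mathrm{Gal}(N|K(a))$, $H_F:=\mathrm{Gal}(N|F)$ and $H_{F'}:=\mathrm{Gal}(N|F')$. Via the Galois correspondence the desired dichotomy $F\subseteq F'$ or $F\cdot F'=K(a)$ translates into the purely group-theoretic dichotomy $H_{F'}\subseteq H_F$ or $H_F\cap H_{F'}=H$. Writing $\gamma:=\omega_K(a)$, I would introduce the subset
\[
T:=\{\sigma\in G\mid v(\sigma(a)-a)\geq\gamma\},
\]
which is a subgroup of $G$ containing $H$ (using henselianness of $(K,v)$, so that $v\circ\sigma=v$ on $\overline{K}$, together with the ultrametric inequality).

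The crux, and the main obstacle, is the identification $H_F=T$. For the cardinality, the orbit $T\cdot a$ is by definition the set of $K$-conjugates of $a$ at $v$-distance $\geq\gamma$ from $a$, so $|T|/|H|=j_w(Q)$. In the setup of Case (ii) of Proposition \ref{Prop pure elt implications} the pair $(a,\gamma)$ is a minimal pair of definition for $w$ over $K$, so Theorem \ref{Thm j = [K(a):ICF] min pair of defn} gives $j_w(Q)=[K(a):F]=[H_F:H]$; hence $|T|=|H_F|$. For the inclusion $H_F\subseteq T$, I would invoke the computation carried out inside the proof of Proposition \ref{Prop pure elt implications}: there it is shown that $(a,\gamma)$ is also a minimal pair of definition for $w$ over $F$ and that $IC_F(w)=F$. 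Applying Theorem \ref{Thm j = [K(a):ICF] min pair of defn} with base $F$ to the minimal polynomial $\tilde{Q}$ of $a$ over $F$ then yields $j_w(\tilde{Q})=[K(a):F]=\deg\tilde{Q}$; equivalently every root of $\tilde{Q}$, i.e.\ every $F$-conjugate $\sigma(a)$ of $a$ with $\sigma\in H_F$, satisfies $v(\sigma(a)-a)\geq\gamma$. This gives $H_F\subseteq T$, and combined with the cardinality count forces $H_F=T$.

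Once $H_F=T$ is in hand, the conclusion is a routine split on $\gamma':=\omega_{F'}(a)\leq\gamma$, using that $\#S_{F'}(a)=1$ forces every $F'$-conjugate of $a$ distinct from $a$ to lie at distance exactly $\gamma'$ from $a$. If $\gamma'=\gamma$, then every $\sigma\in H_{F'}\setminus H$ satisfies $v(\sigma(a)-a)=\gamma$, so $H_{F'}\subseteq T=H_F$, giving $F\subseteq F'$. If $\gamma'<\gamma$, then every $\sigma\in H_{F'}\setminus H$ satisfies $v(\sigma(a)-a)=\gamma'<\gamma$, so $H_{F'}\cap H_F\subseteq H$, whence $H_{F'}\cap H_F=H$ and $F\cdot F'=K(a)$.
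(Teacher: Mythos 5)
Your proof is correct, and it takes a genuinely different route from the paper's. The paper stays entirely within the implicit constant field formalism: in Case 1 it uses the monotonicity $IC_K(w)\subseteq IC_{F'}(w)$, and in Case 2 it identifies $F'(X)^h$ with $K(a,X)^h$ and runs a degree computation relying on $F$ being relatively algebraically closed in $K(X)^h$. You instead translate everything into the Galois correspondence and prove a single lemma, $H_F=T$ where $T=\{\sigma\in G\mid v(\sigma a-a)\geq\gamma\}$, which packages the characterization $F=IC_K(w)$ directly in terms of conjugate distances. Your proof of $H_F=T$ is sound: the cardinality count $|T|/|H|=j_w(Q)=[K(a):F]=|H_F|/|H|$ is correct (noting that $j_w$ with the pair $(a,\gamma)$ counts exactly the $K$-conjugates at distance $\geq\gamma$, which is the orbit $T\cdot a$ with stabilizer $H$), and the inclusion $H_F\subseteq T$ follows from $IC_F(w)=F$, established inside the proof of Proposition \ref{Prop pure elt implications}. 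What your approach buys is a cleaner Case 2: once $H_F=T$ is known, $H_{F'}\cap H_F=H$ drops out immediately, bypassing the paper's henselization degree argument entirely. What it costs is the explicit group-theoretic bookkeeping needed to set up and verify that $T$ is a subgroup and to justify the orbit count; the paper's proof, by contrast, cites the monotonicity of implicit constant fields as a black box and never needs to name $T$.
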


\begin{proof}
	Take the minimal polynomial $Q^\prime$ of $a$ over $F^\prime$. Observe that $(a,\g)$ is a minimal pair of definition for $w$ over $F^\prime$ by Lemma \ref{Lemma min pair of defn over K(z)}. Hence $j(Q^\prime) = [K(a):IC_{F^\prime}(w)]$ by Theorem \ref{Thm j = [K(a):ICF] min pair of defn}. Now $\o_{F^\prime}(a) \leq \o_K(a) = \g$. We first assume that $\o_{F^\prime}(a) = \g$. The condition $\#S_{F^\prime}(a) = 1$ then implies that $j(Q^\prime) = \deg Q^\prime$, that is, $IC_{F^\prime}(w) = F^\prime$. It follows that
	\[ F  = IC_{K}(w) \subseteq IC_{F^\prime}(w) = F^\prime.  \]
	We now consider the case when $\o_{F^\prime}(a) < \g$. Then $j(Q^\prime) = 1$ and hence $IC_{F^\prime}(w) = K(a)$. As a consequence, $K(a) \subseteq F^\prime(X)^h$. Employing [\ref{Kuh vln model}, Theorem 5.10], we conclude that $F^\prime(X)^h \subseteq K(a,X)^h = K(a).K(X)^h \subseteq F^\prime(X)^h$ and hence
	\[ F^\prime(X)^h = K(a,X)^h.  \]
	Observe that $F$ is relatively algebraically closed in $K(X)^h$ by definition. As a consequence, 
	\[ [F.F^\prime:F] = [K(X)^h.F^\prime : K(X)^h] = [F^\prime(X)^h:K(X)^h] = [K(a,X)^h : K(X)^h] = [K(a):F].   \]
	Since $F.F^\prime \subseteq K(a)$, we conclude that $F.F^\prime = K(a)$.
\end{proof}


\subsection{Estimate of $\# S_K(a)$} We now provide a bound for $\#S_K(a)$ whenever $a$ is pure over $K$. First we introduce a couple of notations which will be needed in the proof of the next theorem. Given a finite extension of fields $\mathcal{K}_2| \mathcal{K}_1$, we define 
\[ \ell(\mathcal{K}_2|\mathcal{K}_1) := \max \{ n \mid \text{there is a chain of subfields } \mathcal{K}_1 = L_0 \subsetneq L_1 \subsetneq \dotsc \subsetneq L_n = \mathcal{K}_2  \}.    \]
Also, given any valued field $(\mathcal{K}, \nu)$, we denote its henselization by $\mathcal{K}^{h(\nu)}$.

\begin{Theorem}\label{Thm S_K(a) bound a pure}
	Assume that $a$ is pure over $K$ and that $(K(a)|K,v)$ is not a tame extension. Then,
	\[ \#S_K(a) \leq \begin{cases*}
		v_p \deg_L(a) +1 \text{ whenever }\max v(a-K) \in S_K(a),\\
		v_p \deg_L(a) \text{ otherwise},
	\end{cases*}  \]
	where $v_p$ denotes the $p$-adic valuation and $L:= K(a)\sect K^r$.
\end{Theorem}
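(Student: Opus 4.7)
The plan is to enumerate $S_K(a) = \{\gamma_0 > \gamma_1 > \dotsb > \gamma_m\}$ in strictly decreasing order, set $w_i := v_{a,\gamma_i}$, and build a strictly descending chain of implicit constant fields $F_i := IC_K(w_i)$ between $L$ and $K(a)$. A direct count gives
\[ j_{w_i}(Q) \;=\; 1 + \sum_{k=0}^{i} n_k, \]
where $n_k$ is the number of $K$-conjugates of $a$ at distance exactly $\gamma_k$; since $n_{i+1}\geq 1$ by the very definition of $\gamma_{i+1}\in S_K(a)$, the sequence $j_{w_i}(Q)$ is strictly increasing in $i$.

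Next, using Remark \ref{Rmk characterization pure elts} I would split into two subcases. In the defectless pure subcase, $A_1=\{a_1\}$ with $a_1\in K$, $(a,a_1)$ is a distinguished pair, and $\mu := v(a-a_1) = \max v(a-K) = \delta_K(a)$; the non-tameness assumption together with Proposition \ref{Prop pure elt implications} forces $\mu < \gamma_0$. The Okutsu condition (OS2) implies that every $b\in\overline{K}$ with $\deg_K(b) < \deg_K(a)$ satisfies $v(a-b) \leq \mu$, hence $(a,\gamma_i)$ is a minimal pair of definition for $w_i$ over $K$ exactly when $\gamma_i > \mu$. Since $v(a'-a_1) = v(\sigma(a-a_1)) = \mu$ for every $K$-conjugate $a'$ of $a$, one obtains $v(a-a')\geq\mu$, so every element of $S_K(a)$ is $\geq \mu$. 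Hence $\mu\in S_K(a)$ forces $\mu = \gamma_m$, in which situation $(a,\gamma_i)$ is a minimal pair over $K$ precisely for $i\leq m-1$; otherwise $\gamma_i > \mu$ for every $i$ and all the pairs are minimal. In the immediate pure subcase, $A_1 = \{z_\nu\}_{\nu<\lambda}$ is a pcs in $K$ without a limit in $K$, so $v(a-K)$ has no maximum, and since $\gamma_m \geq \sup_\nu v(a-z_\nu) > v(a-z)$ for every $z\in K$, every $(a,\gamma_i)$ is again a minimal pair of definition over $K$.

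Finally I would combine Theorem \ref{Thm j = [K(a):ICF] min pair of defn} (applicable whenever $(a,\gamma_i)$ is a minimal pair) with Theorem \ref{Thm j = [K(a):ICF] key pol} (which handles the borderline case $\gamma_m = \mu = \delta_K(a)$ in the defectless subcase) to obtain $j_{w_i}(Q) = [K(a):F_i]$ for every $i$. Combined with the strict growth of $j_{w_i}(Q)$ from the first paragraph, this yields a strictly descending chain
\[ K(a) \supsetneq F_0 \supsetneq F_1 \supsetneq \dotsb \supsetneq F_M, \]
where $M := m-1$ if $\max v(a-K)\in S_K(a)$ and $M := m$ otherwise. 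Theorem \ref{Thm bound of ICF} applied to the minimal pair $(a,\gamma_M)$ then gives $L\subseteq F_M$. Since $L^r = L\cdot K^r = K^r$, the extension $(K(a)|L,v)$ is purely wild, so $[K(a):L] = \deg_L(a)$ is a power of $p$, and any chain of strict inclusions from $L$ to $K(a)$ has length at most $v_p\deg_L(a)$. The chain $L\subseteq F_M \subsetneq F_{M-1}\subsetneq \dotsb \subsetneq F_0 \subsetneq K(a)$ contains $M+1$ strict inclusions, producing $M+1 \leq v_p\deg_L(a)$, which rearranges into both branches of the claimed bound. I expect the main obstacle to be the defectless subcase with $\mu\in S_K(a)$: there $(a,\gamma_m)$ fails to be a minimal pair of definition over $K$, forcing a switch to the distinguished-pair formulation of Theorem \ref{Thm j = [K(a):ICF] key pol} (equivalently, to the minimal pair $(a_1,\gamma_m)$); this forces $F_m = K$, which cannot be placed above $L$ in the chain, and the loss of that one strict inclusion is exactly what accounts for the additional ``$+1$'' in the first branch of the inequality.
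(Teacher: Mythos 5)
Your overall strategy matches the paper's — enumerate $S_K(a)$, pass through the valuations $w_i=v_{a,\gamma_i}$, and compare the implicit constant fields $F_i=IC_K(w_i)$ — and the preliminary case analysis (defectless vs.\ immediate pure element, which $(a,\gamma_i)$ are minimal pairs, when $\mu:=\max v(a-K)$ equals $\gamma_m$) is all sound. However there is a genuine gap at the central step. From the strict increase of $j_{w_i}(Q)$ together with $j_{w_i}(Q)=[K(a):F_i]$ you conclude a \emph{nested} chain $K(a)\supsetneq F_0\supsetneq\dotsb\supsetneq F_M$. What you have actually established is only that the degrees $[K(a):F_i]$ are strictly increasing (equivalently, the $[F_i:K]$ are pairwise distinct). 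Distinct degrees do not give the containments $F_{i+1}\subseteq F_i$, and nesting is precisely what your final count ``$M+1$ strict inclusions in a chain of length $\leq v_p\deg_L(a)$'' rests on. The paper proves the containment separately: for $b\in K(a)^\times$ with minimal polynomial $\tilde Q$ of $a$ over $K(b)$, one has $b\in F_{i+1}$ iff $j_{w_{i+1}}(Q/\tilde Q)=0$, and since $\gamma_{i+1}<\gamma_i$ this forces $j_{w_i}(Q/\tilde Q)=0$, i.e.\ $b\in F_i$; the degree inequality then upgrades the inclusion $F_{i+1}\subseteq F_i$ to $F_{i+1}\subsetneq F_i$.

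Worth noting is that your argument admits a repair that bypasses the nesting entirely. By Theorem \ref{Thm bound of ICF}, $L\subseteq F_i\subsetneq K(a)$ for each $i\leq M$, and since $(K(a)|L,v)$ is purely wild, $[K(a):L]=p^e$ with $e=v_p\deg_L(a)$. Thus each $[F_i:L]$ is a proper divisor of $p^e$, hence lies in $\{1,p,\dotsc,p^{e-1}\}$, and by the strict growth of $j_{w_i}(Q)$ these degrees are pairwise distinct. That already gives $M+1\leq e$, which is the bound you want, with no chain needed. This is in fact slightly more elementary than the paper's route; but as written your proof asserts the nested chain without proving it, and that assertion should either be proved (as in the paper) or replaced by the degree count.
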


\begin{proof}
	The fact that $a$ is pure over $K$ implies that an Okutsu sequence of $a$ over $K$ is given by $A_0 = \{a\}, A_1$ where $A_1 \subset K$. Take any $z\in K$. Then $v(a-z) \leq v\left( \s(a-z) - (a-z) \right) = v(\s a - a) $ for all $\s\in \Gal(\overline{K}|K)$. It follows that
	\[ v(a-K) \leq  \min S_K(a) \leq \o_K(a) = \g.  \]
	If all the above inequalities are equalities, then $(K(a)|K,v)$ is tame by Lemma \ref{Lemma va = kras(a) tame}. Hence at least one of the inequalities is strict. We first assume that $v(a-K) < \min S_K(a) = \g$. Then $\#S_K(a) = 1$ and hence $(K(a)|K,v)$ is purely wild by Proposition \ref{Prop S_K(a) = 1 necessary condns}. It now remains to prove the assertion when $v(a-K)\leq \min S_K(a) < \g$.  
	
	\pars We write $S_K(a) = \{ \g, \g_1, \dotsc , \g_m \}$ where $\g > \g_1 > \dotsc > \g_m \geq v(a-K)$. The fact that $\g > \min S_K(a)$ implies that $m\geq 1$. Set $w_i := v_{a,\g_i}$ for all $i$. When $i = 0$, we identify $\g_i$ with $\g$ and $w_i$ with $w$. Since $\g_i > v(a-K)$, we observe that $(a,\g_i)$ is a minimal pair of definition for $w_i$ over $K$, for all $i\leq m-1$. Then 
	\[ j_{w_i}(Q) = [K(a):IC_K(w_i)] \text{ for all } i\leq m-1.  \]
	By definition, we have $j_w(Q) < j_{w_1}(Q)$. As a consequence, 
	\begin{equation}\label{Eqn 6}
		[IC_K(w):K] > [IC_K(w_1):K].
	\end{equation}
Take any $b\in K(a) \setminus \{0\}$ and the minimal polynomial $\tilde{Q}$ of $a$ over $K(b)$. Observe that $(a,\g)$ is a minimal pair of definition of $w_1$ over $K(b)$ by Lemma \ref{Lemma min pair of defn over K(z)}. It now follows from Theorem \ref{Thm j = [K(a):ICF] min pair of defn} that
	\begin{align*}
		b \in IC_K(w_1) &\Longleftrightarrow K(b,X)^{h(w_1)} = K(X)^{h(w_1)}\\
		&\Longleftrightarrow j_{w_1}(\tilde{Q}) = j_{w_1}(Q) \\
		&\Longleftrightarrow j_{w_1}(f) = 0,
	\end{align*}
	where $f:= Q/\tilde{Q} \in K(b)[X]$. Moreover, observe that $j_{w_1}(f) = 0$ implies that $j_w(f) = 0$ by definition. We have thus obtained that
	\[  b\in IC_K(w_1) \Longleftrightarrow j_{w_1}(f) = 0 \Longrightarrow j_w(f) = 0 \Longleftrightarrow b\in IC_K(w).   \]
	From (\ref{Eqn 6}), we conclude that
	\[IC_K(w_1) \subsetneq IC_K(w).\]
	Moreover, we observe that $L\subseteq IC_K(w_i)$ for all $i\leq m-1$ by Theorem \ref{Thm bound of ICF}. We thus have the chain
	\begin{equation}\label{Eqn 7}
		L\subseteq IC_K(w_{m-1})\subsetneq \dotsc \subsetneq IC_K(w_1) \subsetneq IC_K(w).
	\end{equation}
	It now follows from (\ref{Eqn 7}) that $m-1\leq \ell(IC_K(w)|L)$ and hence
	\begin{equation}\label{Eqn 8}
		\#S_K(a) = m+1 \leq \ell(IC_K(w)|L)+2. 
	\end{equation}
	Now $(K(a)|L,v)$ is a purely wild extension and hence $IC_K(w)|L$ is a $p$-power extension. It follows that
	\begin{equation}\label{Eqn 9}
		\ell(IC_K(w)|L) \leq v_p [IC_K(w):L].
	\end{equation}
	If $IC_K(w) = K(a)$, then $j(Q) = 1$ by Theorem \ref{Thm j = [K(a):ICF] min pair of defn}, which is not possible by our choice of $\g$. So $IC_K(w)$ is a proper subfield of $K(a)$ and hence $K(a)|IC_K(w)$ is a non-trivial $p$-power extension. Thus combining Equations (\ref{Eqn 8}) and (\ref{Eqn 9}), we obtain that
	\[ \#S_K(a) \leq \ell(IC_K(w)|L)+2 \leq v_p [IC_K(w):L] + 2 \leq v_p \deg_L(a)+1.   \]
	If $\g_m \neq \max v(a-K)$, then $\g_m > v(a-K)$ and hence $(a,\g_m)$ is a minimal pair of definition for $w_m$ over $K$. Hence in this case Equation (\ref{Eqn 7}) can be refined as 
	\[ L\subseteq IC_K(w_m)\subsetneq IC_K(w_{m-1})\subsetneq \dotsc \subsetneq IC_K(w_1) \subsetneq IC_K(w).  \]
	It then follows that
	\[ \# S_K(a) = m+1 \leq \ell(IC_K(w)|L) +1 \leq \ell(K(a)|L) \leq v_p \deg_L(a).  \]
\end{proof}

\begin{Corollary}\label{Coro S_K(a)=1 a pure K(a)|L minimal}
	Let notations and assumptions be as in Theorem \ref{Thm S_K(a) bound a pure}. Moreover, assume that $K(a)|L$ is a minimal extension. Then, 
	\[ \#S_K(a) = \begin{cases*}
		2 \text{ whenever } \max v(a-K) \in S_K(a),\\
		1 \text{ otherwise}.
	\end{cases*}  \]
\end{Corollary}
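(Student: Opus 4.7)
The plan is to reuse the chain of implicit constant fields built in the proof of Theorem \ref{Thm S_K(a) bound a pure} and to exploit the minimality of $K(a)|L$ to force that chain to collapse. As a first step, since $(K(a)|K,v)$ is not tame, Lemma \ref{Lemma va = kras(a) tame} gives $\gamma := \omega_K(a) > v(a-z)$ for every $z \in K$, so $(a,\gamma)$ is a minimal pair of definition for $w := v_{a,\gamma}$ over $K$. Theorems \ref{Thm bound of ICF} and \ref{Thm j = [K(a):ICF] min pair of defn} then yield $L \subseteq IC_K(w) \subsetneq K(a)$, and the minimality of $K(a)|L$ forces $IC_K(w) = L$.

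Writing $S_K(a) = \{\gamma_0, \gamma_1, \ldots, \gamma_m\}$ with $\gamma_0 = \gamma > \gamma_1 > \ldots > \gamma_m$ and setting $w_i := v_{a,\gamma_i}$, I would split into the two cases of the corollary. In Case A, where $\gamma_m = \max v(a-K)$, non-tameness yields $\gamma_m < \gamma$ and hence $m \geq 1$, and the chain (\ref{Eqn 7}) from the proof of Theorem \ref{Thm S_K(a) bound a pure} reads
\[ L \subseteq IC_K(w_{m-1}) \subsetneq \ldots \subsetneq IC_K(w_1) \subsetneq IC_K(w) = L, \]
so every $IC_K(w_i)$ equals $L$ and the strict inclusions cannot occur; this forces $m = 1$, i.e., $\#S_K(a) = 2$. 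In Case B, where $\max v(a-K) \notin S_K(a)$, I would first verify that $(a,\gamma_m)$ is also a minimal pair of definition for $w_m$ over $K$, which reduces to showing $\gamma_m > v(a-z)$ for all $z \in K$: this follows from the strong-triangle estimate $\min S_K(a) \geq v(a-z)$ for every $z\in K$ (applied to $(\sigma a - z)-(a-z)$), combined with either the strict gap when $v(a-K)$ has a maximum, or the non-attainment of the supremum when it does not. The refined chain
\[ L \subseteq IC_K(w_m) \subsetneq IC_K(w_{m-1}) \subsetneq \ldots \subsetneq IC_K(w) = L \]
then collapses by the same argument to yield $m = 0$, i.e., $\#S_K(a) = 1$.

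The main subtlety is verifying $\gamma_m > v(a-z)$ for every $z \in K$ in the sub-case of Case B where $v(a-K)$ has no maximum; however, this sub-case corresponds by Remark \ref{Rmk characterization pure elts}(ii) to $(K(a)|K,v)$ being immediate, which forces $L = K$, and then the conclusion also follows directly from Theorem \ref{Thm S_K(a)=1 minimal extn}.
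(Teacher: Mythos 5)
Your proof is correct and follows essentially the same route as the paper's: you reuse the chain of implicit constant fields from the proof of Theorem \ref{Thm S_K(a) bound a pure}, note that minimality of $K(a)|L$ together with $L\subseteq IC_K(w)\subsetneq K(a)$ forces $IC_K(w)=L$, and then observe the chain must collapse. The paper phrases the collapse compactly by citing the inequality $\#S_K(a)\leq\ell(IC_K(w)|L)+2$ (strict by one when $\gamma_m\neq\max v(a-K)$) and ruling out $\#S_K(a)=1$ via Lemma \ref{Lemma va = kras(a) tame} in the remaining case, while you re-derive the chain termwise; these are the same argument.

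One small inefficiency: the subtlety you flag at the end is not actually a subtlety. In Case B you need $\gamma_m>v(a-z)$ for all $z\in K$, and this is automatic whether or not $v(a-K)$ has a maximum: we always have $\gamma_m=\min S_K(a)\geq v(a-K)$, so if $\gamma_m\in v(a-K)$ then $\gamma_m$ would be its maximum, contradicting $\max v(a-K)\notin S_K(a)$; hence $\gamma_m\notin v(a-K)$ and thus $\gamma_m>v(a-K)$. This is exactly how the paper handles it uniformly in the proof of Theorem \ref{Thm S_K(a) bound a pure}. Your detour through ``$v(a-K)$ has no maximum $\Rightarrow (K(a)|K,v)$ immediate $\Rightarrow L=K \Rightarrow$ apply Theorem \ref{Thm S_K(a)=1 minimal extn}'' is correct but unnecessary.
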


\begin{proof}
	It has been observed in the proof of Theorem \ref{Thm S_K(a) bound a pure}  that
	\[ \#S_K(a) \leq \ell(IC_K(w)|L) +2, \]
	with the inequality being strict whenever $v(a-K) < \min S_K(a)$. Moreover, $IC_K(w)$ is a proper subfield of $K(a)$. The minimality of $K(a)|L$ then implies that $IC_K(w) = L$. Hence the above expression can be rewritten as 
	\[  \#S_K(a) \leq 2,  \]
	 with the inequality being strict whenever $v(a-K) < \min S_K(a)$. If $\max v(a-K) = \min S_K(a)$ and $\#S_K(a) = 1$, then $(K(a)|K,v)$ is a tame extension by Lemma \ref{Lemma va = kras(a) tame}, contradicting our hypothesis. It follows that $\#S_K(a) = 2$ in this case.
\end{proof}


\section{Estimate of $\#S_K(a)$ in the general defectless case}

In the general defectless case, we have the following result:

\begin{Theorem}\label{Thm S_K(a) bound general}
	Assume that $(K(a)|K,v)$ is defectless. Further, assume that it is not a tame extension. Set
	\[ \epsilon := \min \{ i \mid (K(a_i)|K,v) \text{ is a tame extension} \}.  \]
	So $1\leq \epsilon \leq n$. Then,
	\[ K(a_{\epsilon}, \dotsc , a_n) = K(a_{\epsilon}) \subseteq L_i:= K(a_i)\sect K^r \text{ for all } i<\epsilon.  \]
	Moreover, 
	\[ \#S_K(a) \leq \sum_{i=0}^{\epsilon -1} v_p \deg_{L_i}(a_i) + n-\epsilon+1,  \]
	where $v_p$ is the $p$-adic valuation.
\end{Theorem}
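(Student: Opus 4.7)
My plan is to split the argument into two stages: first to establish the structural assertion about $a_\epsilon$ and the fields $L_i$, and then to deduce the bound on $\#S_K(a)$ by stratifying $S_K(a)$ according to which element of the complete distinguished chain governs the minimal pair of definition.

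For the structural claim, the key observation is that $a_\epsilon, a_{\epsilon+1},\ldots,a_n$ is itself a complete distinguished chain of $a_\epsilon$ over $K$, and $(K(a_\epsilon)|K,v)$ is tame by the minimality of $\epsilon$. Theorem~\ref{Thm tame S_K(a)} applied to $a_\epsilon$ yields the strict descending chain $K(a_\epsilon)\supsetneq K(a_{\epsilon+1})\supsetneq\cdots\supsetneq K(a_n)$, so that $K(a_\epsilon,\ldots,a_n)=K(a_\epsilon)$. The tameness also places $K(a_\epsilon)$ inside $K^r$. To obtain $K(a_\epsilon)\subseteq L_i$ for $i<\epsilon$, I would invoke the standard containment $K(a_{j+1})\subseteq K(a_j)$ between consecutive elements of a complete distinguished chain in a henselian valued field, which chained gives $K(a_\epsilon)\subseteq K(a_i)$; intersecting with $K^r$ finishes the argument.

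For the bound, I enumerate $S_K(a)=\{\gamma_0>\gamma_1>\cdots>\gamma_m\}$ and set $w_k:=v_{a,\gamma_k}$. Each $\gamma_k$ is assigned a level $i(k)\in\{0,1,\ldots,n\}$ by the condition $\gamma_k\in(\d_K(a_{i(k)}),\d_K(a_{i(k)-1})]$, using the conventions $\d_K(a_{-1})=+\infty$ and $\d_K(a_n)=-\infty$. The relation $v(a-a_i)=\d_K(a_{i-1})\ge\gamma_k$ at level $i(k)=i$ gives $w_k=v_{a_i,\gamma_k}$, and Proposition~\ref{Prop dist pair min pair} then ensures that $(a_i,\gamma_k)$ is a minimal pair of definition for $w_k$ over $K$. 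Writing $\Lambda_i$ for the set of indices at level $i$, a direct adaptation of the chain-of-implicit-constant-fields argument from the proof of Theorem~\ref{Thm S_K(a) bound a pure} shows that $\{IC_K(w_k)\}_{k\in\Lambda_i}$ forms a strictly descending chain inside $[L_i,K(a_i)]$ (the bounds coming from Theorem~\ref{Thm bound of ICF}). For tame levels $i\ge\epsilon$ one has $L_i=K(a_i)$, which forces $\#\Lambda_i\le 1$; the tame contribution is thus at most $n-\epsilon+1$ in total.

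For wild levels $i<\epsilon$, the extension $K(a_i)|L_i$ is purely wild of $p$-power degree, so one has the a priori bound $\#\Lambda_i\le v_p\deg_{L_i}(a_i)+1$. The main obstacle will be to tighten this to $\#\Lambda_i\le v_p\deg_{L_i}(a_i)$ at each wild level, which is exactly what is needed in order to match the claimed total. I expect this to come from a boundary analysis at the top of each wild level: the topmost value $\gamma_{k_0}\in\Lambda_i$ is realised as $v(a-\sigma a)$ for some $\sigma\in\Gal(\overline{K}|K)$, and a computation using the chain relations together with Krasner's lemma (Lemma~\ref{Lemma Krasner}) should produce at least two roots of the minimal polynomial $Q_i$ of $a_i$ within $w_{k_0}$-distance $\gamma_{k_0}$ of $a_i$. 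That would yield $j_{w_{k_0}}(Q_i)\ge 2$, hence $IC_K(w_{k_0})\subsetneq K(a_i)$, which shortens the chain of implicit constant fields by one unit at each wild level. Summing across all levels then delivers the claimed bound $\sum_{i=0}^{\epsilon-1} v_p\deg_{L_i}(a_i)+(n-\epsilon+1)$.
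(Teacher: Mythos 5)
Your stratification of $S_K(a)$ by level is a genuinely different decomposition from the paper's, but there are two concrete gaps, one of which you acknowledge.

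First, the structural claim. The containment $K(a_{j+1})\subseteq K(a_j)$ for consecutive elements of a complete distinguished chain is \emph{not} a standard fact: it holds in the tame case (Theorem~\ref{Thm tame S_K(a)}), but in general one only knows $v(a_j-a_{j+1})=\d_K(a_j)>\d_K(a_{j+1})$, whereas Krasner's Lemma would require $v(a_j-a_{j+1})>\o_K(a_{j+1})$, and $\o_K(a_{j+1})$ may well exceed $\d_K(a_j)$ when $a_{j+1}$ is not tame. Your chained inclusion $K(a_\epsilon)\subseteq\cdots\subseteq K(a_i)$ runs precisely through the wild range $j<\epsilon$, where this reasoning breaks. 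The paper avoids this by applying Krasner's Lemma once, directly to the tame element $a_\epsilon$: since $\o_K(a_\epsilon)=\d_K(a_\epsilon)<\d_K(a_{\epsilon-1})=v(a_i-a_\epsilon)$ for every $i<\epsilon$, Lemma~\ref{Lemma Krasner} gives $a_\epsilon\in K(a_i)$ in one step.

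Second, the tightening at wild levels, which you flag as the main obstacle, is a real gap and your sketch does not close it. If $\g_{k_0}$ is the top of level $i$ and $\g_{k_0}>\o_K(a_i)$ (which can happen, since $\g_{k_0}$ is calibrated against conjugates of $a$, not of $a_i$), then \emph{no} root of $Q_i$ other than $a_i$ lies within $w_{k_0}$-distance $\g_{k_0}$ of $a_i$; one gets $j_{w_{k_0}}(Q_i)=1$ and $IC_K(w_{k_0})=K(a_i)$, so the chain of implicit constant fields does not shorten and you are stuck with $\#\Lambda_i\le v_p\deg_{L_i}(a_i)+1$. (Every $\s$ with $v(a-\s a)=\g_{k_0}$ then fixes $a_i$ by Krasner, so the conjugate $\s a_i$ you would like to use collapses to $a_i$ itself.) The paper sidesteps this by running a backward induction on the chain index and bounding $\#S_K(a_i)$ in terms of $\#S_K(a_{i+1})$: it stratifies the set $S_K(a_i)$ (not $S_K(a)$) by whether a value is $\geq\d_K(a_i)$, and the top of the high stratum is then automatically $\d_1=\o_K(a_i)$, for which $j_{w_1}(Q_i)\ge 2$ is immediate. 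Values below $\d_K(a_i)$ are shown, via the triangle inequality applied to $v(a_i-\s a_i)$, to inject into $S_K(a_{i+1})$, which drives the induction. If you want to salvage the level decomposition of $S_K(a)$, you would need an additional argument at each boundary $\g_{k_0}=\d_K(a_{i-1})$ relating conjugates of $a$ fixing $a_i$ to the count at the next level, which is exactly the bookkeeping the paper's inductive formulation handles cleanly.
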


\begin{proof}
	From Theorem \ref{Thm tame S_K(a)} we have that $K(a_{\epsilon}, \dotsc , a_n) = K(a_{\epsilon})$. Moreover, for any $i<\epsilon$, employing the triangle inequality we obtain that
	\[ v(a_i - a_\epsilon) = v(a_{\epsilon -1} - a_\epsilon) = \d_K(a_{\epsilon -1}) > \d_K(a_\epsilon) = \o_K(a_\epsilon),   \]
	where the final equality follows from Theorem \ref{Thm tame S_K(a)}. Then Lemma \ref{Lemma Krasner} yields that $a_\epsilon \in L_i$.  
	
	\pars We will employ backward induction to prove the final assertion. We first prove the base case for $i = \epsilon -1$. For any $\s\in G:= \Gal(\overline{K}|K)$, observe that
	\[  v(\s a_{\epsilon - 1} - a_{\epsilon - 1}) = v(\s a_{\epsilon - 1} - \s a_{\epsilon} + \s a_{\epsilon} - a_{\epsilon} + a_{\epsilon} - a_{\epsilon - 1}).  \]
	If $\s a_\epsilon \neq a_\epsilon$, then $v(\s a_\epsilon - a_\epsilon) \leq \o_K(a_\epsilon) = \d_K(a_\epsilon) < \d_K(a_{\epsilon - 1}) = v(a_{\epsilon - 1} - a_\epsilon)$. Hence $v(\s a_{\epsilon - 1} - a_{\epsilon - 1}) = v(\s a_\epsilon - a_\epsilon) \in S_K(a_\epsilon)$ in this case. Else $\s a_{\epsilon} = a_{\epsilon}$, and hence $v(\s a_{\epsilon - 1} - a_{\epsilon - 1}) \in S_{K(a_\epsilon)}(a_{\epsilon - 1})$. It follows that
	\[ \# S_K(a_{\epsilon - 1}) \leq  \#S_{K(a_\epsilon)} (a_{\epsilon - 1}) + \#S_K(a_\epsilon).   \]
	Now observe that $a_{\epsilon -1}$ is pure over $K(a_\epsilon)$. Moreover, $(K(a_{\epsilon -1})|K,v)$ is not tame by the definition of $\epsilon$. Consequently, $(K(a_\epsilon, a_{\epsilon -1})|K(a_\epsilon),v)$ is not a tame extension. Thus employing Theorem \ref{Thm S_K(a) bound a pure} and Theorem \ref{Thm tame S_K(a)} we obtain that
	\[ \# S_K (a_{\epsilon - 1}) \leq v_p \deg_{L_{\epsilon -1}} (a_{\epsilon - 1}) + n-\epsilon +1.   \]
	
	\parm We now assume that for some $0\leq i \leq \epsilon-1$ we have that
	\begin{equation}\label{Eqn 15}
		\#S_K(a_{i^\prime}) \leq \sum_{t=i^\prime}^{\epsilon -1} v_p \deg_{L_t}(a_t) + n-\epsilon+1, \text{ for all } i^\prime\geq i+1.
	\end{equation}
	Take some $\s\in G$ such that $\s a_i \neq a_i$. Write 
	\[ v(a_i - \s a_i) = v(a_i - a_{i+1} + a_{i+1} - \s a_{i+1} + \s a_{i+1} - \s a_i  ).  \]
	If $v(a_{i+1} - \s a_{i+1}) < \d_K(a_i)$ then $v(a_i - \s a_i) = v (a_{i+1} - \s a_{i+1}) \in S_K(a_{i+1})$ by the triangle inequality. In other words,
	\begin{equation}\label{Eqn 16}
		\text{Either }v(a_i - \s a_i) \geq \d_K(a_i) , \text{ or, } v(a_i - \s a_i) \in S_K(a_{i+1}). 
	\end{equation}
	We write 
	\[ S_K(a_i) = \{ \d_1, \dotsc , \d_{m^\prime} \},   \]
	where $\o_K(a_i) = \d_1 > \dotsc > \d_{m^\prime}$. Then $\d_1 \geq \d_K(a_i)$. Take the least positive integer $m \in \{ 1,\dotsc , m^\prime \}$ such that $\d_i < \d_K(a_i)$ for all $i > m$. We first assume that $m = 1$. It follows from (\ref{Eqn 16}) that $\d_i \in S_K(a_{i+1})$ for all $i\geq 2$. Hence, 
	\begin{equation}\label{Eqn 17}
		\# S_K(a_i) \leq \# S_K(a_{i+1}) + 1.
	\end{equation}
The fact that $i<\epsilon$ implies that $(K(a_i)|K,v)$ is not a tame extension. Consequently, $(K(a_i)|L_i,v)$ is a non-trivial purely wild extension and hence $v_p \deg_{L_i} (a_i) \geq 1$. Thus employing (\ref{Eqn 15}), we can modify (\ref{Eqn 17}) as
\[  \# S_K(a_i) \leq \sum_{t=i}^{\epsilon -1} v_p \deg_{L_t}(a_t) + n-\epsilon+1.   \]
We now assume that $m>1$. Set $w_s := v_{a_i, \d_s}$ for all $s\leq m$. Since $\d_s \geq \d_K(a_i)$, we obtain from Theorem \ref{Thm j = [K(a):ICF] key pol} that
\[  j_{w_s} (Q_i) = [K(a_i): IC_K(w_s)],  \]
where $Q_i$ is the minimal polynomial of $a_i$ over $K$. Employing the same arguments as in the proof of Theorem \ref{Thm S_K(a) bound a pure}, we now obtain a chain
\begin{equation}\label{Eqn 18}
	IC_K(w_1) \supsetneq IC_K(w_2) \supsetneq \dotsc \supsetneq IC_K(w_{m-1}) \supseteq L_i.
\end{equation}
From (\ref{Eqn 16}) we observe that 
\[ \#S_K(a_i) \leq m + \# S_K(a_{i+1}).   \]
Then (\ref{Eqn 18}) yields that
\begin{equation}\label{Eqn 19}
	\# S_K(a_i) \leq \ell (IC_K(w_1)|L_i) + 1 + \# S_K(a_{i+1}).
\end{equation}
If $IC_K(w_1) = K(a_i)$, then it follows from Theorem \ref{Thm j = [K(a):ICF] key pol} that $j_{w_1}(Q_i) = 1$ which is not possible since $\d_1 = \o_K(a_i)$. It follows that $IC_K(w_1)$ is a proper subfield of $K(a_i)$. As a consequence, we have that
\[ \ell (IC_K(w_1)|L_i) + 1 \leq \ell (K(a_i)|L_i) \leq v_p \deg_{L_i} (a_i).   \]
Then from (\ref{Eqn 15}) and (\ref{Eqn 19}) we conclude that
\[ \# S_K(a_i) \leq \sum_{t=i}^{\epsilon -1} v_p \deg_{L_t}(a_t) + n-\epsilon+1.   \]
The theorem now follows. 
\end{proof}


\section{Ramification ideals and the set $S_K(a)$}\label{Sect Ramification ideals}

\subsection{Ramification ideals}  Let $\mathcal{E}:= (\O|K,v)$ be a Galois extension of henselian valued fields. Set $\mathcal{G}:= \Gal(\O|K)$. For any subgroup $H$ of $\mathcal{G}$, we define
\[ \mathcal{I}_H:= \left( \dfrac{\s z}{z} - 1 \mid \s\in H, \, z\in \O^\times \right).  \]
Then $\mathcal{I}_H$ is an ideal of $\mathcal{O}_\O$.

\begin{Definition}
	An ideal $\mathcal{I}$ of $\mathcal{O}_\O$ is said to be a \textbf{ramification ideal} of $\mathcal{E}$ if $(0) \subsetneq \mathcal{I} \subseteq \mathcal{M}_\O$ and $\mathcal{I} = \mathcal{I}_H$ for some subgroup $H$ of $\mathcal{G}$. 
	\newline The set of ramification ideals of $\mathcal{E}$ will be denoted by $\Ram(\mathcal{E})$.
\end{Definition}

\begin{Remark}
	If $\mathcal{E}$ is a purely wild extension, then 
	\[ \Ram(\mathcal{E}) = \{ \mathcal{I}_H \mid H \neq \{\id  \} \}.    \]
\end{Remark}



%


\subsection{Valuation basis} 

\begin{Definition}
	Let $(\O|K,v)$ be a finite extension of valued fields. A $K$-basis $\{ x_1, \dotsc , x_n \}$ of $\O$ is said to form a \textbf{valuation basis} of $(\O|K,v)$ if for all $c_1, \dotsc , c_n$ in $K$, we have that $v(\sum_{i=1}^{n}  c_i x_i)  = \min\{  v(c_i x_i) \}$.
\end{Definition}

\begin{Remark}
	It is known (cf. [\ref{FVK-AR-PCK valn independence and defectless extns}, Proposition 3.4]) that a finite extension $(\O|K,v)$ of henselian valued fields admits a valuation basis if and only if it is defectless.
\end{Remark}

\begin{Proposition}\label{Prop depth one valn basis}
	Let $(b,0)$ be a distinguished pair over $(K,v)$ and set $n:= \deg_K(b)$. Then $\{1, b, \dotsc , b^{n-1}\}$ forms a valuation basis of $(K(b)|K,v)$.
\end{Proposition}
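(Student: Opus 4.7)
The plan is to extract the valuation basis property from the distinguished pair hypothesis by bootstrapping through the Gauss-type valuation $w := v_{0, v(b)}$ on $K[X]$. The crucial first step is to observe that condition (DP2), applied with $a = 0$, gives $v(b) \geq v(b-z)$ for every $z \in \overline{K}$ with $\deg_K(z) < n$. Combined with the ultrametric inequality $v(b-z) \geq \min\{v(b), v(z)\}$, this forces the pointwise identity
\[ v(b-z) = \min\{v(b), v(z)\} \quad \text{for all } z \in \overline{K} \text{ with } \deg_K(z) < n, \]
the only nontrivial case being $v(z) = v(b)$, where the two bounds pinch together.

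With this in hand, I would show that $v(f(b)) = w(f)$ for every nonzero $f(X) = \sum_{i=0}^{n-1} c_i X^i \in K[X]$. To do so, I factor $f$ over the algebraic closure as $f(X) = c\prod_j (X - z_j)$, note that every root $z_j$ satisfies $\deg_K(z_j) \leq \deg f < n$, and apply the displayed identity root by root. By multiplicativity of $w$ (extended to $\overline{K}[X]$ by the same formula), $w(f) = v(c) + \sum_j w(X - z_j) = v(c) + \sum_j \min\{v(b), v(z_j)\}$, while the direct computation gives $v(f(b)) = v(c) + \sum_j v(b - z_j) = v(c) + \sum_j \min\{v(b), v(z_j)\}$. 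The two expressions agree, so
\[ v(f(b)) = w(f) = \min_i \bigl(v(c_i) + i\,v(b)\bigr) = \min_i v(c_i b^i), \]
which is precisely the valuation basis property for $\{1, b, \dotsc, b^{n-1}\}$.

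The only step with any real content is the initial translation of (DP2) into the pointwise identity for $v(b-z)$; everything that follows is a routine application of the multiplicativity of Gauss valuations on polynomials, together with the observation that the roots of a polynomial of degree $<n$ automatically have $K$-degree $<n$. I do not anticipate a serious obstacle here, since the result is essentially a repackaging of the distinguished pair condition via the standard pair-of-definition formalism from Subsection~2.6. If any subtlety arises, it will be in checking that the formula defining $w$ on $\overline{K}[X]$ is consistent between its presentation as $v_{0, v(b)}$ and its presentation as $v_{b, v(b)}$, but these agree on linear factors $X-z$ of degree $<n$ precisely because of the pointwise identity established in the first step.
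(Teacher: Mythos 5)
Your proof is correct and follows essentially the same route as the paper's: both compare the valuation $v \circ (f \mapsto f(b))$ with the monomial valuation $v_{0,vb}$ on polynomials of degree $< n$, with (DP2) for the pair $(b,0)$ supplying exactly the inequality $v(b-z) \le v(b)$ that makes the two agree on linear factors. The paper's version is terser — it introduces $\tilde w := v_{b,vb}$ and asserts $\tilde w f = vf(b)$ and $\tilde w = v_{0,vb}$ on low-degree polynomials without spelling out the root-by-root verification — whereas you make explicit the pointwise identity $v(b-z) = \min\{v(b),v(z)\}$ for $\deg_K z < n$ and the resulting agreement of the two pairs of definition on linear factors; this is precisely the "subtlety" you flag at the end, and it is the content the paper implicitly relies on.
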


\begin{proof}
	Set $\tilde{w}:= v_{b,vb}$. From the given conditions we deduce that $\tilde{w}f = vf(b)$ for all $f(X)\in K[X]$ with $\deg f < n$. As a consequence, given $c_0, \dotsc , c_{n-1}$ in $K$ not all zero, we have that
	\begin{align*}
		 v \left( \sum_{i=0}^{n-1}c_i b^i \right)  = \tilde{w} \left( \sum_{i=0}^{n-1}c_i X^i \right) = v_{0, vb} \left( \sum_{i=0}^{n-1}c_i X^i \right) = \min\{ v(c_i b^i) \}.
	\end{align*}
Since $\{ 1, b, \dotsc , b^{n-1} \}$ forms a basis of $K(b)|K$, we have the assertion.
\end{proof}


\subsection{Some basic lemmas}

\begin{Lemma}\label{Lemma v(x^i-1) geq v(x-1)}
	Let $(K,v)$ be a valued field and $x\in \mathcal{O}_K$. Then $v(x^i-1) \geq v(x-1)$ for all $i\in\NN$.
\end{Lemma}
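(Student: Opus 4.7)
The plan is to use the standard factorization $x^i - 1 = (x-1)(x^{i-1} + x^{i-2} + \dotsc + x + 1)$, valid in any commutative ring, together with the multiplicativity of the valuation. Since $x \in \mathcal{O}_K$ means $vx \geq 0$, every power $x^j$ lies in $\mathcal{O}_K$ as well, and hence the sum $x^{i-1} + x^{i-2} + \dotsc + x + 1$ lies in $\mathcal{O}_K$, which gives it non-negative valuation.

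Applying $v$ to the factorization then yields
\[
v(x^i - 1) = v(x-1) + v(x^{i-1} + x^{i-2} + \dotsc + x + 1) \geq v(x-1) + 0 = v(x-1),
\]
which is exactly the claim. The case $i=0$ gives $v(x^0 - 1) = v(0) = \infty \geq v(x-1)$ trivially (or one can simply restrict to $i \geq 1$, as the $i=0$ case is degenerate), and the case $i=1$ is an equality.

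There is no real obstacle here; the only thing to be mindful of is the hypothesis $x \in \mathcal{O}_K$, which is precisely what makes the second factor integral and therefore of non-negative valuation. Without this hypothesis, if $vx < 0$, then $v(x^{i-1} + \dotsc + 1) = (i-1)vx$ would be negative, and the inequality could fail. So a one-line proof using the factorization, followed by a remark that the second factor lies in $\mathcal{O}_K$ by the ultrametric inequality, is all that is needed.
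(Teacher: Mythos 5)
Your proof is correct and uses exactly the same factorization $x^i-1 = (x-1)(x^{i-1}+\dotsc+x+1)$ as the paper, together with the observation that the second factor lies in $\mathcal{O}_K$; the paper's proof is a one-liner citing this identity and the hypothesis $x\in\mathcal{O}_K$.
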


\begin{proof}
	The proof follows directly from the identity $x^i - 1 = (x-1)(x^{i-1}+ \dotsc + x +1)$ and the assumption $x\in\mathcal{O}_K$.
\end{proof}

\begin{Lemma}\label{Lemma v(sigma^i a - a) geq v(sigma a - a)}
	Let $(K,v)$ be a henselian valued field, $b\in\overline{K}\setminus\{0\}$ and $\s\in\Gal(\overline{K}|K)$. Then for all $i\in\NN$, we have that
	\[   v\left(\dfrac{\s^i b -b}{b}\right) \geq v\left(\dfrac{\s b - b}{b} \right).   \]
\end{Lemma}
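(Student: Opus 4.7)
The plan is to reduce the statement to a telescoping identity together with the henselian invariance of $v$ under the Galois action. Clearing denominators, the inequality to prove is equivalent to
\[ v(\s^i b - b) \geq v(\s b - b) \]
for all $i \in \NN$.

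The key observation is the telescoping decomposition
\[ \s^i b - b \;=\; \sum_{j=0}^{i-1} (\s^{j+1} b - \s^j b) \;=\; \sum_{j=0}^{i-1} \s^j(\s b - b). \]
Since $(K,v)$ is henselian and $\s \in \Gal(\overline{K}|K)$, we have $v \circ \s^j = v$, hence each summand $\s^j(\s b - b)$ has valuation exactly $v(\s b - b)$. The ultrametric inequality then gives
\[ v(\s^i b - b) \;\geq\; \min_{0 \leq j \leq i-1} v\bigl(\s^j(\s b - b)\bigr) \;=\; v(\s b - b), \]
which is what we wanted. The boundary cases are immediate: for $i=0$ the left-hand side is $v(0) = \infty$, and for $i=1$ the inequality is an equality.

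There is essentially no obstacle to overcome — the argument is a two-line application of $\Gal(\overline{K}|K)$-invariance of $v$ (a defining feature of henselian fields, already noted in the preliminaries) combined with the ultrametric triangle inequality. The previous Lemma \ref{Lemma v(x^i-1) geq v(x-1)} could in principle be invoked after normalizing $x := \s b/b \in \mathcal{O}_K$ (indeed $v(x)=0$ by $\s$-invariance of $v$), but since $\s^i b/b$ is not a power of $x$ but rather the product $\prod_{j=0}^{i-1}\s^j(x)$, the cleanest route is the telescoping argument above rather than an appeal to the preceding lemma.
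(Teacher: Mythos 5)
Your proof is correct and is essentially the same as the paper's: the paper does the telescoping via a short induction (noting $v(\s^{i+1}b - \s^i b) = v(\s b - b)$ and applying the triangle inequality), while you write the telescoping sum out in closed form. Both hinge on the same two facts — $\s$-invariance of $v$ in the henselian setting and the ultrametric inequality — so the arguments are interchangeable.
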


\begin{proof}
	The assertion is immediate for $i=0,1$. Now assume that the assertion is true for some $i\in \NN$. Then $v(\s^i b -b) \geq v(\s b -b)$. Observe that $v(\s^{i+1}b - \s^i b) = v(\s b -b)$ and hence $v(\s^{i+1}b - b) \geq v(\s b -b)$ by the triangle inequality. The assertion now follows by induction. 
\end{proof}


\subsection{Depth one defectless extensions} 

\begin{Lemma}\label{Lemma I_H generating set}
	Let $\mathcal{E}:= (K(b^\prime)|K,v)$ be a defectless Galois extension of henselian valued fields. Assume that $b^\prime$ is pure over $K$. Then there exists a generator $b$ of $K(b^\prime)|K$ with $b$ pure over $K$ such that for any non-trivial subgroup $H$ of $\mathcal{G}:= \Gal(K(b^\prime)|K)$, we have that
	\[  \mathcal{I}_H = \left( \dfrac{\s b}{b} - 1 \mid \s\in H \right).   \]
\end{Lemma}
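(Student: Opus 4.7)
The plan is to construct $b$ as a suitable $K$-translate of $b'$ for which $(b,0)$ becomes a distinguished pair over $K$; the powers of $b$ will then form a valuation basis of $(K(b)|K,v)$ via Proposition \ref{Prop depth one valn basis}, and the desired ideal equality will reduce to a direct computation using the factorization of $t^i-1$.

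First I construct $b$. Since $b'$ is pure over $K$ and $\mathcal{E}$ is defectless, case (i) of Remark \ref{Rmk characterization pure elts} provides $a_1 \in K$ with $v(b'-a_1) = \max v(b'-K)$. Set $b := b' - a_1$; then $K(b) = K(b')$ and $n := \deg_K(b) = \deg_K(b') \geq 2$ (else there is nothing to prove). For any $z \in \overline{K}$ with $\deg_K(z) < n$, note that $a_1 + z$ has the same degree over $K$ as $z$ (since $a_1 \in K$), so that (OS2) applied to the Okutsu sequence $\{b'\},\{a_1\}$ of $b'$ yields
\[ v(b-z) \;=\; v\bigl(b' - (a_1 + z)\bigr) \;\leq\; v(b'-a_1) \;=\; vb. \]
This exhibits $\{b\},\{0\}$ as an Okutsu sequence of $b$ over $K$; in particular $(b,0)$ is a distinguished pair over $K$ and $b$ is pure over $K$.

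Proposition \ref{Prop depth one valn basis} then guarantees that $\{1, b, \dots, b^{n-1}\}$ is a valuation basis of $(K(b)|K,v)$. Fix a non-trivial subgroup $H$ of $\mathcal{G}$. Since the inclusion $(\s b/b - 1 \mid \s \in H) \subseteq \mathcal{I}_H$ is trivial, it remains to prove the reverse inclusion. Given $\s \in H$ and $z \in \O^\times$, write $z = \sum_{i=0}^{n-1} c_i b^i$ with $c_i \in K$, and set $t := \s b/b$, a unit in $\mathcal{O}_\O$ since $v(\s b) = vb$. Factoring $t^i - 1 = (t-1)u_i$ with $u_i := 1 + t + \dots + t^{i-1} \in \mathcal{O}_\O$, one obtains
\[ \s z - z \;=\; \sum_{i=0}^{n-1} c_i b^i (t^i - 1) \;=\; (t-1)\sum_{i=0}^{n-1} c_i b^i u_i. \]
The valuation basis property together with $u_i \in \mathcal{O}_\O$ gives $v\bigl(\sum_{i=0}^{n-1} c_i b^i u_i\bigr) \geq \min_i v(c_i b^i) = vz$. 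Hence $\s z/z - 1 = (t-1)\zeta$ with $\zeta \in \mathcal{O}_\O$, so $\s z/z - 1$ lies in the principal ideal generated by $\s b / b - 1$, which is contained in the right-hand side ideal.

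The main obstacle is the construction of $b$: one needs to correctly identify the translation so that $(b,0)$ becomes a distinguished pair, which is exactly what unlocks Proposition \ref{Prop depth one valn basis}. The fact that $\max v(b'-K)$ is attained is essential here and is guaranteed by the defectlessness of $\mathcal{E}$ combined with $b'$ being pure (case (i) of Remark \ref{Rmk characterization pure elts}); in the defect case, no such translate would exist. Once $b$ is in hand, the remaining computation is an elementary factorization.
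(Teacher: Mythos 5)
Your proof is correct and follows essentially the same route as the paper: translate $b'$ by its distinguished-pair partner $a_1\in K$ to make $(b,0)$ a distinguished pair, invoke Proposition \ref{Prop depth one valn basis} for the valuation basis $\{1,b,\dots,b^{n-1}\}$, and bound $\s z/z - 1$ by expanding in that basis. The only cosmetic difference is that you carry out the factorization $t^i - 1 = (t-1)(1+t+\cdots+t^{i-1})$ inline, whereas the paper packages the same estimate as Lemma \ref{Lemma v(x^i-1) geq v(x-1)}.
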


\begin{proof}
	Let $n:= \deg_K(b^\prime)$. If $(b^\prime, b_1^\prime)$ is a distinguished pair over $(K,v)$, we set $b:= b^\prime - b_1^\prime$. Then $(b,0)$ is a distinguished pair over $(K,v)$. Take $z\in K(b^\prime)^\times$. Then we have an expression $z = \sum_{i=0}^{n-1} c_i b^i$ where $c_i\in K$. For any $\s\in \mathcal{G}$, we have that
	\[ \dfrac{\s z}{z} - 1 = \sum_{i=0}^{n-1} \left( \dfrac{\s c_i b^i}{c_i b^i} - 1 \right) \dfrac{c_i b^i}{z} = \sum_{i=0}^{n-1} \left( \dfrac{\s b^i}{b^i} - 1 \right) \dfrac{c_i b^i}{z}.      \] 
	Since $v(c_i b^i) \geq vz$ by Proposition \ref{Prop depth one valn basis}, we have that $\frac{\s z}{z} - 1 \in \left( \frac{\s b^i}{b^i} - 1 \mid \s\in H, \, 0\leq i \leq n-1 \right)$. Now Lemma \ref{Lemma v(x^i-1) geq v(x-1)} yields that $v \left( \frac{\s b^i}{b^i} - 1 \right) \geq v \left( \frac{\s b}{b} - 1 \right)$. We thus conclude that
	\[  \dfrac{\s z}{z} - 1 \in \left( \dfrac{\s b}{b} - 1  \mid \s\in H  \right).  \]
	The assertion now follows. 
\end{proof}

We can now prove the following theorem:

\begin{Theorem}\label{Thm Ram(E) = S_K(a)}
	Assume that $\mathcal{E}:= (K(a)|K,v)$ is defectless purely wild and Galois. Moreover, assume that $a$ is pure over $K$. Then we have that
	\[  \# \Ram(\mathcal{E}) = \# S_K(a).  \]
\end{Theorem}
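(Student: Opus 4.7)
The plan is to exhibit an explicit bijection $\Phi \colon \Ram(\mathcal{E}) \to S_K(a)$ by passing to a convenient generator and exploiting that every finitely generated ideal of the valuation ring $\mathcal{O}_\O$ is principal. First I would apply Lemma \ref{Lemma I_H generating set} to produce a pure generator $b$ of $K(a)|K$ satisfying
\[ \mathcal{I}_H = \left( \frac{\s b}{b} - 1 \;\Big|\; \s \in H \right) \]
for every non-trivial subgroup $H$ of $\mathcal{G} := \Gal(K(a)|K)$. The construction in that lemma sets $b := a - a_1$, where $(a, a_1)$ is a distinguished pair of $a$ over $K$; since $(K(a)|K,v)$ is defectless and $a$ is pure, case (i) of Remark \ref{Rmk characterization pure elts} applies and $a_1 \in K$. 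Hence $\s b - b = \s a - a$ for every $\s \in \mathcal{G}$, so in particular $\{v(\s b - b) \mid \s \in \mathcal{G} \setminus \{\id\}\} = S_K(a)$.

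Next, since $\mathcal{O}_\O$ is a valuation ring, every finitely generated ideal is principal and is generated by any of its generators of smallest value. Applied to the finite generating set coming from Lemma \ref{Lemma I_H generating set}, this shows that each $\mathcal{I}_H$ is principal, generated by an element $\s b/b - 1$ with $\s \in H \setminus \{\id\}$ achieving $\min_{\tau \in H \setminus \{\id\}} v(\tau b - b) - vb$. Define
\[ \Phi(\mathcal{I}_H) := \min_{\s \in H \setminus \{\id\}} v(\s a - a), \]
which lies in $S_K(a)$ because $\s a \neq a$ whenever $\s \in \mathcal{G}$ is non-trivial. Well-definedness and injectivity of $\Phi$ are simultaneous: if $\Phi(\mathcal{I}_{H_1}) = \Phi(\mathcal{I}_{H_2})$, then both $\mathcal{I}_{H_i}$ are principal ideals of $\mathcal{O}_\O$ whose generators have equal $v$-value, hence they coincide; conversely, equal ideals force the associated minima (each equal to the $v$-value of the principal generator, shifted by $vb$) to agree.

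For surjectivity, given $\g \in S_K(a)$, choose $\s \in \mathcal{G}$ with $v(\s a - a) = \g$ and consider the non-trivial cyclic subgroup $H := \langle \s \rangle$. Lemma \ref{Lemma v(sigma^i a - a) geq v(sigma a - a)} applied to $b$ yields $v(\s^i b - b) \geq v(\s b - b)$ for all $i$; translating through $\s^i b - b = \s^i a - a$ gives $v(\s^i a - a) \geq \g$ for every $i$ with $\s^i \neq \id$, with equality at $i=1$. Hence $\Phi(\mathcal{I}_H) = \g$, and the bijectivity of $\Phi$ gives $\#\Ram(\mathcal{E}) = \#S_K(a)$. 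The only conceptual hurdle is recognizing that the a priori complicated ideal $\mathcal{I}_H$ collapses, via Lemma \ref{Lemma I_H generating set} plus the valuation ring structure, to a principal ideal encoded by the single numerical invariant $\Phi(\mathcal{I}_H) \in S_K(a)$; once this is in hand the cyclic subgroup construction completes the argument routinely.
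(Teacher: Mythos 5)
Your argument is correct and follows essentially the same route as the paper: both pass to the translated generator $b=a-a_1$ (the paper does this via the reduction $S_K(a)=S_K(a-a_1)$), both use Lemma \ref{Lemma I_H generating set} to obtain a finite generating set for $\mathcal{I}_H$, both collapse it to a principal ideal generated by a minimum-value element, and both use Lemma \ref{Lemma v(sigma^i a - a) geq v(sigma a - a)} to pin down the cyclic subgroups realizing each element of $S_K(a)$. Phrasing the conclusion as an explicit bijection $\Phi$ rather than proving the two inequalities $\#\Ram(\mathcal{E})\geq m$ and $\#\Ram(\mathcal{E})\leq m$ separately is a cosmetic reorganization, not a different method.
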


\begin{proof}
	Since $S_K(a) = S_K(a-a_1)$ for any $a_1\in K$, we can assume without any loss of generality that $(a,0)$ is a distinguished par over $(K,v)$. Let $\# S_K(a) = m$. Take $\s_1, \dotsc , \s_m \in \Gal(L|K)$ such that $S_K(a) = \{ v(\s_1 a -a), \dotsc , v(\s_m a -a) \}$. Denote by $H_i$ the subgroup generated by $\s_i$. It follows from Lemma \ref{Lemma I_H generating set} that 
	\[ \mathcal{I}_{H_i} = \left( \dfrac{\s_i ^j a}{a} - 1 \mid j\in\NN\right).  \]
	From Lemma \ref{Lemma v(sigma^i a - a) geq v(sigma a - a)}, we have that $v(\s_i ^j a - a) \geq v(\s_i a -a)$. As a consequence, we obtain that
	\[  \mathcal{I}_{H_i} = \left( \dfrac{\s_i a}{a} - 1 \right).  \]
	Moreover for $i\neq j$ we have that $v(\s_i a -a) \neq v(\s_j a -a)$. Thus $\mathcal{I}_{H_i} \neq \mathcal{I}_{H_j}$. It follows that $\#\Ram(\mathcal{E}) \geq m$. 
	
	\pars Conversely, take any non-trivial subgroup $H$ of $\Gal(K(a)|K)$. By Lemma \ref{Lemma I_H generating set}, we have that 
	\[ \mathcal{I}_H = \left( \dfrac{\s a }{a} - 1 \mid \s\in H \right).  \]
	Since $H$ is finite, we can choose $\s_H \in H$ such that $v(\s_H a - a) \leq v(\s a -a )$ for all $\s\in H$. It follows that
	\[  \mathcal{I}_H = \left( \dfrac{\s_H a}{a} - 1 \right).  \]
	Observe that $v(\s_H a -a) \in S_K(a)$ and hence $v(\s_H a - a) = v(\s_i a -a)$ for some $i\in\{1, \dotsc , m\}$. As a consequence, $\mathcal{I}_H = \mathcal{I}_{H_i}$. The theorem now follows.
\end{proof}

We have the following result in the general case:

\begin{Corollary}\label{Coro Ram(E) = S_L(a)}
	Assume that $\mathcal{E}:= (K(a)|K,v)$ is a defectless Galois extension which is not tame. Further assume that $a$ is pure over $K$. Then,
	\[  \# \Ram(\mathcal{E}) = \# S_L(a), \text{ where } L:= K(a)\sect K^r.  \]
\end{Corollary}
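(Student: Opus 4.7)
The plan is to reduce the statement to the purely wild case already treated by Theorem \ref{Thm Ram(E) = S_K(a)}, by passing from $K$ up to the maximal tame subextension $L = K(a)\sect K^r$ inside $K(a)$. First I would verify that $(K(a)|L,v)$ satisfies all the hypotheses of that theorem. It is purely wild by the definition of $L$. The subextension $(L|K,v)$ is tame, hence defectless, so by multiplicativity of the defect in the tower $K\subseteq L\subseteq K(a)$ and the defectlessness of $(K(a)|K,v)$, the extension $(K(a)|L,v)$ is defectless. For the Galois property, $L$ is the fixed field of the ramification subgroup of $\Gal(K(a)|K)$, which is normal; hence $L|K$ is Galois and so is $K(a)|L$.

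Next I would show that $a$ remains pure over $L$. Since $(K(a)|K,v)$ is defectless and $a$ is pure over $K$, Theorem \ref{Thm cdc Khanduja-Aghigh} yields a complete distinguished chain of length one: a distinguished pair $(a,a_1)$ over $K$ with $a_1\in K \subseteq L$. The claim is that $(a,a_1)$ remains a distinguished pair over $L$. Only DP2 needs verification. For any $z\in\overline{K}$ with $\deg_L(z) < \deg_L(a) = \deg_K(a)/[L:K]$, the identity $\deg_K(z)\cdot [L(z):K(z)] = \deg_L(z)\cdot [L:K]$ forces $\deg_K(z) \leq \deg_L(z)\cdot [L:K] < \deg_K(a)$, so DP2 over $K$ gives $v(a-z) \leq v(a-a_1)$. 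Conditions DP1 and DP3 over $L$ are immediate. Thus $\ell_L(a)=1$, i.e., $a$ is pure over $L$.

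Then I would argue that $\Ram(K(a)|K,v) = \Ram(K(a)|L,v)$. For any subgroup $H\subseteq \Gal(K(a)|K)$, the ideal $\mathcal{I}_H$ is contained in $\mathcal{M}_{K(a)}$ precisely when $v(\s z - z) > vz$ for all $\s\in H$ and $z\in K(a)^\times$, which is precisely the condition $H\subseteq G^r(K(a)|K) = \Gal(K(a)|L)$. Hence the ramification ideals of $\mathcal{E}$ are exactly the ideals $\mathcal{I}_H$ associated to non-trivial subgroups $H$ of $\Gal(K(a)|L)$, and these form $\Ram(K(a)|L,v)$. Applying Theorem \ref{Thm Ram(E) = S_K(a)} to the defectless Galois purely wild extension $(K(a)|L,v)$ with $a$ pure over $L$ yields $\#\Ram(K(a)|L,v) = \#S_L(a)$, and the corollary follows.

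The main technical point is the descent of purity from $K$ to $L$; everything else reduces to a direct application of definitions together with the standard identification of $\Gal(K(a)|L)$ as the ramification subgroup of $\Gal(K(a)|K)$. Since $L\subseteq K(a)$, the key degree comparison is clean and the argument is self-contained modulo the results already proved in the excerpt.
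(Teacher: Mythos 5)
Your proposal is correct and follows the same strategy as the paper's proof: reduce to the purely wild extension $(K(a)|L,v)$, show $\Ram(\mathcal{E})=\Ram(K(a)|L,v)$ by observing that $\mathcal{I}_H$ is the unit ideal when $H\nsubseteq\Gal(K(a)|L)$, and then apply Theorem \ref{Thm Ram(E) = S_K(a)}. Your write-up is actually more careful than the paper's in one respect: you explicitly verify that $a$ remains pure over $L$ (via the degree comparison showing $\deg_L(z)<\deg_L(a)\Rightarrow\deg_K(z)<\deg_K(a)$, so the distinguished pair $(a,a_1)$ survives the base change), a hypothesis of Theorem \ref{Thm Ram(E) = S_K(a)} that the paper invokes without comment.
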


\begin{proof}
    Since $(K(a)|L,v)$ is a purely wild extension, in light of Theorem \ref{Thm Ram(E) = S_K(a)} it is sufficient to show that $\Ram(\mathcal{E}) = \Ram(K(a)|L,v)$. Clearly, $\Ram(K(a)|L,v) \subseteq \Ram(\mathcal{E})$. For the converse, take any subgroup $H$ of $\Gal(K(a)|K)$ such that $H\nsubseteq \Gal(K(a)|L)$. Then there exists some $\s\in H$ such that $\s x \neq x$ for some $x\in L$. Since $L = K(a)\sect K^r$, this implies that $\s\notin G^r$. As a consequence, there exists some $z\in K(a)^\times$ such that $v(\s z - z) = vz$. In other words, $\frac{\s z}{z} - 1$ is a unit in $\mathcal{O}_{K(a)}$. It follows that $\mathcal{I}_H = \mathcal{O}_{K(a)}$ and hence $\mathcal{I}_H\notin \Ram(\mathcal{E})$.
\end{proof}

\pars The conclusions of Theorem \ref{Thm Ram(E) = S_K(a)} and Corollary \ref{Coro Ram(E) = S_L(a)} are not true for arbitrary defectless Galois extensions, as evidenced by Example \ref{Eg S_K(a) neq S_K(b)}. A possible avenue for generalizing them is by introducing the notion of \textit{depth of an extension}, defined below. In the setup of Example \ref{Eg S_K(a) neq S_K(b)}, setting $\g^\prime:= \b-\a$, we observe that $K(\b) = K(\g^\prime)$ and $\ell(\g^\prime) = 1 < 2 = \ell(\b)$. This motivates the following definition:

\begin{Definition}
	Let $\mathcal{E}:= (F|K,v)$ be a defectless extension of henselian valued fields. Then we define
	\[ \depth(\mathcal{E}) := \min\{ \ell(a) \mid a \text{ is a generator of } F|K  \}.   \]
\end{Definition}

The following theorem is an immediate consequence of Theorem \ref{Thm Ram(E) = S_K(a)} and Corollary \ref{Coro Ram(E) = S_L(a)}:

\begin{Theorem}\label{Thm depth one defectless Ram(E) = S_K(a)}
	Let $\mathcal{E}:= (\O|K,v)$ be a defectless Galois extension of henselian valued fields. Assume that $\depth(\mathcal{E}) = 1$. Further assume that $\mathcal{E}$ is not tame. Then for any generator $a$ of $\O|K$ such that $a$ is pure over $K$, we have that
	\[ \#\Ram(\mathcal{E}) = \# S_L(a), \text{ where } L:= \O\sect K^r.  \]
	In particular, if $\mathcal{E}$ is purely wild, we have that
	\[ \#\Ram(\mathcal{E}) = \# S_K(a).  \]
\end{Theorem}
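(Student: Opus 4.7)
The plan is to observe that this theorem is essentially a restatement of Corollary \ref{Coro Ram(E) = S_L(a)} (and Theorem \ref{Thm Ram(E) = S_K(a)} for the purely wild case) under the weaker hypothesis that the \emph{extension} $\mathcal{E}$, rather than a particular generator, is of depth one. The key conceptual point is that the set $\Ram(\mathcal{E})$ is by construction an invariant of the Galois extension $\mathcal{E}$ and does not depend on any choice of generator. In contrast, the set $S_K(a)$ does depend on the chosen generator $a$; the hypothesis $\depth(\mathcal{E})=1$ is exactly what guarantees the existence of at least one generator for which the earlier results apply.

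Concretely, I would proceed as follows. Fix any generator $a$ of $\O|K$ with $a$ pure over $K$; such a generator exists precisely because $\depth(\mathcal{E})=1$. Then $\O = K(a)$, and $\mathcal{E} = (K(a)|K,v)$ is a defectless Galois extension that is not tame, with $a$ pure over $K$. These are exactly the hypotheses of Corollary \ref{Coro Ram(E) = S_L(a)}, which immediately yields
\[
\#\Ram(\mathcal{E}) = \#S_L(a), \quad L := \O \cap K^r = K(a) \cap K^r.
\]

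For the second assertion, suppose further that $\mathcal{E}$ is purely wild. By the definition of purely wild extensions, $\O \cap K^r = K$, so $L = K$ and therefore $S_L(a) = S_K(a)$. This gives $\#\Ram(\mathcal{E}) = \#S_K(a)$. Alternatively, one can appeal directly to Theorem \ref{Thm Ram(E) = S_K(a)}, whose hypotheses (defectless, purely wild, Galois, and $a$ pure over $K$) are met.

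There is no serious obstacle here: the entire content of the theorem is the passage from \emph{existence} of a pure generator (guaranteed by $\depth(\mathcal{E})=1$) to the conclusion that the identification $\#\Ram(\mathcal{E}) = \#S_L(a)$ holds for \emph{every} such pure generator. The latter is automatic from the two preceding results, since the left-hand side $\#\Ram(\mathcal{E})$ is intrinsic to $\mathcal{E}$ while the right-hand side, for any pure generator $a$, is forced to equal it by Corollary \ref{Coro Ram(E) = S_L(a)}. The only conceptual subtlety worth flagging to the reader is that although different pure generators $a,a'$ of $\O|K$ may a priori yield different sets $S_L(a)$ and $S_L(a')$, their cardinalities must coincide — a nontrivial corollary of the theorem itself.
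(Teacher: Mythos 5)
Your proof is correct and matches the paper's own treatment: the paper simply notes that this theorem is "an immediate consequence" of Theorem \ref{Thm Ram(E) = S_K(a)} and Corollary \ref{Coro Ram(E) = S_L(a)}, which is exactly your argument. The hypothesis $\depth(\mathcal{E})=1$ supplies the existence of a pure generator, and the earlier results then apply verbatim to each such generator.
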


In light of the previous theorem, we propose the following problem:

\begin{Question}
	Let $\mathcal{E}:= (\O|K,v)$ be a defectless Galois purely wild extension of henselian valued fields and assume that $\depth(\mathcal{E}) = n$. Does there always exist a generator $a$ of $\O|K$ with $\ell(a) = n$ such that $\#\Ram(\mathcal{E}) = \# S_K(a)$? If yes, is it true for every generator of minimal depth?
\end{Question}


\subsection{Depth one defect extensions} We now consider the case when $\mathcal{E}:= (\O|K,v)$ is a Galois defect extension of depth one. The following theorem generalizes [\ref{FVK AR Topics in higher ramification theory}, Corollary 3.11].

\begin{Theorem}\label{Thm I_H defect depth one}
	Assume that $\mathcal{E}:= (K(a)|K,v)$ is a Galois defect extension of henselian valued fields. Further assume that $a$ is pure over $K$. Then for every subgroup $H$ of $\mathcal{G}:= \Gal(K(a)|K)$, we have that
	\[ \mathcal{I}_H = \left( \dfrac{\s a - a}{a - z_\nu} \mid \s\in H, \, \nu<\l\right),  \]
	where $\{z_\nu\}_{\nu<\l}$ is a pcs in $K$ without a limit in $K$ and which has $a$ as a limit. 
\end{Theorem}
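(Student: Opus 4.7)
The plan is to prove the two containments of the stated equality separately; write $\mathcal{J}$ for the right-hand side ideal. For $\mathcal{J} \subseteq \mathcal{I}_H$, each generator rewrites, using $\s z_\nu = z_\nu$, as
\[
\dfrac{\s a - a}{a - z_\nu} \;=\; \dfrac{\s(a - z_\nu)}{a - z_\nu} - 1 \;=\; \dfrac{\s y}{y} - 1, \qquad y := a - z_\nu \in K(a)^\times,
\]
which is one of the defining generators of $\mathcal{I}_H$.

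For the reverse inclusion, I would take arbitrary $\s \in H$ and $z \in K(a)^\times$, dispose of the trivial case $\s z = z$, and write $z = f(a)$ with $f(X) \in K[X]$ of degree $< n := \deg_K(a)$. A Taylor expansion of $f$ about $z_\nu$,
\[
f(\s a) - f(a) \;=\; (\s a - a) \cdot h_\nu, \quad h_\nu \;:=\; \sum_{k \geq 1} \dfrac{f^{(k)}(z_\nu)}{k!} \sum_{j=0}^{k-1}(\s a - z_\nu)^j (a - z_\nu)^{k-1-j},
\]
then yields the key factorisation
\[
\dfrac{\s z}{z} - 1 \;=\; \dfrac{\s a - a}{a - z_\nu} \cdot \dfrac{(a - z_\nu) h_\nu}{f(a)},
\]
reducing matters to producing some $\nu$ for which the second factor lies in $\mathcal{O}_{K(a)}$, i.e., $v\bigl( (a - z_\nu) h_\nu \bigr) \geq v f(a)$.

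The crux is this valuation estimate, and this is where the purity of $a$ enters. Factoring $f = c \prod_i (X - \alpha_i)$ in $\overline{K}[X]$, each root $\alpha_i$ has $\deg_K(\alpha_i) < n$, so by (OS2) and (OS5) applied to the Okutsu sequence $A_1 = \{z_\nu\}$ one can find an index past which $v(a - z_\nu) > v(a - \alpha_i)$ for every $i$; moreover, since $a$ and $\s a$ are both limits of $\{z_\nu\}$ and $v(a - z_\nu)$ is strictly increasing by (OS5), $v(\s a - a) > v(a - z_\nu)$ for every $\nu$ (whenever $\s a \neq a$). Fixing such a $\nu$ and writing $w := v(a - z_\nu)$, I would then invoke $v(\s a - z_\nu) = w$, $v f(z_\nu) = v f(a)$, and the elementary-symmetric-polynomial estimate
\[
v\!\left(\dfrac{f^{(k)}(z_\nu)}{k!}\right) \;\geq\; v f(a) - \sum_{j=1}^{k} \beta_j,
\]
where $\beta_1 \geq \beta_2 \geq \dotsc$ lists the values $v(a - \alpha_i)$ in decreasing order. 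Since $kw > k \beta_1 \geq \sum_{j=1}^{k} \beta_j$ by the choice of $\nu$, this upgrades to $v\bigl( \tfrac{f^{(k)}(z_\nu)}{k!}(a-z_\nu)^k \bigr) > v f(a)$ for every $k \geq 1$, and bounding the inner $j$-sum appearing in $h_\nu$ by $(k-1) w$ then delivers $v\bigl( (a - z_\nu) h_\nu \bigr) > v f(a)$, as required.

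The main obstacle I anticipate is this Taylor-expansion valuation bookkeeping: the factorisation is dictated by the claimed form of the answer, but certifying that the $k = 0$ term of the expansion of $f$ at $z_\nu$ dominates uniformly over all $k \geq 1$ for large $\nu$ requires carefully comparing the distances $v(a - \alpha_i)$ for the roots of $f$ against the growing distances $v(a - z_\nu)$ supplied by the purity hypothesis via Remark \ref{Rmk characterization pure elts}(ii).
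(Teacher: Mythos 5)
Your proposal is correct, and it deviates from the paper's argument in the crucial valuation estimate, so the comparison is worth recording.

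The easy inclusion is treated identically: $\dfrac{\s a-a}{a-z_\nu}=\dfrac{\s(a-z_\nu)}{a-z_\nu}-1$ with $a-z_\nu\in K(a)^\times$. For the reverse inclusion, both you and the paper write $b=f(a)$ with $\deg f<\deg_K(a)$ and use a Hasse--Schmidt expansion around $z_\nu$. (One caveat on notation: you write $f^{(k)}(z_\nu)/k!$, which is undefined when $p\mid k!$; you should phrase this directly with the Hasse--Schmidt derivatives $\partial_k f$, exactly as the paper does, since defect forces $\ch Kv=p>0$.) From there the two arguments diverge. The paper expands $\s b-b$ at $a$ and $f(a)-f(z_\nu)$ at $z_\nu$, then invokes Kaplansky's Lemma 4 via Remark \ref{Rmk characterization pure elts} (the conditions \textbf{C1}--\textbf{C3}: ultimate stability of $v\partial_i f(z_\nu)$ and existence of a single dominant index $h$) to match up the two expansions and peel off a factor $jv\!\left(\tfrac{\s a-a}{a-z_\nu}\right)$. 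You instead write the explicit factorisation $\tfrac{\s z}{z}-1=\tfrac{\s a-a}{a-z_\nu}\cdot\tfrac{(a-z_\nu)h_\nu}{f(a)}$ and then bound $v(\partial_k f(z_\nu))$ \emph{directly} by factoring $f=c\prod_i(X-\alpha_i)$ in $\overline K[X]$: using (OS2), (OS5) to pick $\nu$ with $w:=v(a-z_\nu)>v(a-\alpha_i)$ for every root $\alpha_i$ (so that $v(z_\nu-\alpha_i)=v(a-\alpha_i)$), the elementary-symmetric-polynomial estimate $v(\partial_k f(z_\nu))\geq vf(a)-\sum_{j\leq k}\beta_j$ together with $kw>k\beta_1\geq\sum_{j\leq k}\beta_j$ gives $v\bigl(\partial_k f(z_\nu)(a-z_\nu)^k\bigr)>vf(a)$ termwise, hence $v\bigl((a-z_\nu)h_\nu\bigr)>vf(a)$. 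This is a self-contained Newton-polygon computation that replaces the appeal to Kaplansky's Lemma 4; it is a little more explicit at the cost of factoring $f$ over $\overline K$, whereas the paper's route stays inside $K[X]$ and quotes the abstract pcs machinery. Both deliver the same conclusion $v(\s b-b)-vb\geq v\!\left(\tfrac{\s a-a}{a-z_\nu}\right)$ for $\nu$ large enough, which suffices since $\mathcal{O}_{K(a)}$ is a valuation ring. A small remark: the invocation of $vf(z_\nu)=vf(a)$ in your plan is not actually needed in your computation (you compute $vf(a)=vc+\sum_j\beta_j$ directly from the factored form), though it harmlessly guarantees $f(z_\nu)\neq0$.
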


\begin{proof}
	We consider a pcs $\{z_{\nu}\}_{\nu<\l}$ in $K$ with $a$ as a limit, as observed in Remark \ref{Rmk characterization pure elts}. Take any $b\in K(a)^\times$. Then $b = f(a)$ for some $f(X)\in K[X]$ with $\deg f < \deg_K(a)$. Consider the Hasse-Schmidt derivatives $\partial_i f(X) \in K[X]$ satisfying
	\[  f(X) - f(z) = \sum_{i=1}^{d} \partial_i f(z) (X-z)^i \text{ for all } z\in\overline{K},  \]
	where $d = \deg f$. Take some $\s\in\mathcal{G}$. Then, 
	\begin{align*}
		\s b - b &= \sum_{i=1}^{d} \partial_i f(a) (\s a -a)^i,\\
		f(a) - f(z_\nu) &= \sum_{i=1}^{d} \partial_i f(z_\nu) (a-z_\nu)^i, \text{ for all } \nu<\l.
	\end{align*} 
Observe that $\deg\partial_i f(X) < \deg_K(a)$ for all $i$. In light of Remark \ref{Rmk characterization pure elts} and [\ref{Kaplansky}, Lemma 4], we can choose some $h\in \{1, \dotsc , d\}$ and some ordinal $\nu_0<\l$ large enough such that the following conditions are satisfied:
\begin{condenum}
	\item $vf(a) = vf(z_\nu)$ for all $\nu_0 \leq \nu < \l$,\vspace{2 mm} \label{Cond 1}
	\item $v \partial_i f(z_\nu) = v\partial_i f(a)$ for all $\nu_0\leq \nu<\l$ and $i=1, \dotsc , d$, \vspace{2 mm}\label{Cond 2} 
	\item $ v\left( \partial_h f(z_\nu) (a-z_\nu)^h \right) < v \left(\partial_i f(z_\nu) (a-z_\nu)^i\right)$ for all $i\neq h$ and $\nu_0\leq \nu<\l$. \label{Cond 3}
\end{condenum}
Take some $j\in\{1, \dotsc , d\}$ such that 
\[ v\left( \partial_j f(a) (\s a-a)^j \right) \leq v\left( \partial_i f(a) (\s a-a)^i \right) \text{ for all } i\neq j.  \]
Then, for all $\nu_0\leq\nu<\l$, we obtain that
\begin{align*}
	 v(\s b - b) - vb & \geq v\left( \partial_j f(a) (\s a-a)^j \right) - vb \\
	 &= v\left( \partial_j f(z_\nu)(a-z_\nu)^j\right) -vf(a) + jv\left( \dfrac{\s a-a}{a-z_\nu} \right) \text{ by Condition \textbf{C2}}\\
	 &\geq v\left( \partial_j f(z_\nu)(a-z_\nu)^j\right) -v\left( f(a) - f(z_\nu)\right) + jv\left( \dfrac{\s a-a}{a-z_\nu} \right) \text{ by Condition \textbf{C1}}\\
	 &\geq jv\left( \dfrac{\s a-a}{a-z_\nu} \right) \text{ by Condition \textbf{C3}}.
\end{align*}
Since $z_\nu\in K$, we have that $v(\s a - z_\nu) = v(a-z_\nu)$, and hence $v(\s a-a)\geq v(a-z_\nu)$ for all $\nu<\l$. The sequence $\{v(a-z_\nu)\}_{\nu<\l}$ being strictly increasing, we conclude that
\[ v(\s a-a) > v(a-z_\nu) \text{ for all } \nu<\l.  \]
As a consequence, we obtain that
\[ v(\s b - b) - vb \geq jv\left( \dfrac{\s a-a}{a-z_\nu} \right) \geq v \left( \dfrac{\s a-a}{a-z_\nu}\right). \]
Observe that $\dfrac{\s a -a}{a-z_\nu} = \dfrac{\s(a-z_\nu) - (a-z_\nu)}{a-z_\nu} \in \mathcal{I}_H$ whenever $\s\in H$. The theorem now follows.
\end{proof}

As an immediate consequence, we obtain the following: 

\begin{Theorem}\label{Thm Ram(E) leq S_K(a) defect}
	Let $\mathcal{E}:= (\O|K,v)$ be a Galois defect extension of henselian valued fields. Assume that $\depth(\mathcal{E}) = 1$. Then for any generator $a$ of $\O|K$ such that $a$ is pure over $K$, we have that
	\[ \#\Ram(\mathcal{E}) \leq \# S_K(a).  \]
\end{Theorem}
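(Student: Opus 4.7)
The plan is to define a map $\Ram(\mathcal{E}) \to S_K(a)$ and show it is an injection. For each non-trivial subgroup $H$ of $\mathcal{G}:= \Gal(K(a)|K)$, every $\s \in H\setminus\{\id\}$ moves $a$, so we may set
\[
\gamma_H := \min\{v(\s a - a) : \s \in H, \ \s\neq\id\} \in S_K(a).
\]
I will show that $\mathcal{I}_H$ depends on $H$ only through $\gamma_H$, and that distinct values of $\gamma_H$ produce distinct ideals, so that $\mathcal{I}_H \mapsto \gamma_H$ is a well-defined injection $\Ram(\mathcal{E}) \hookrightarrow S_K(a)$, yielding the desired inequality.

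The key observation is that $\mathcal{O}_{K(a)}$ is a valuation ring, so every ideal is determined by the upper set of valuations of its elements. Using the generating set from Theorem \ref{Thm I_H defect depth one}, for each fixed $\nu$ the generators $\frac{\s a - a}{a - z_\nu}$ with $\s \in H\setminus\{\id\}$ have valuations $v(\s a - a) - v(a - z_\nu) \geq \gamma_H - v(a - z_\nu)$, with equality attained at any $\s_H \in H$ realizing $\gamma_H$. In a valuation ring, any generator of strictly larger valuation is a multiple of one of smaller valuation, so $\mathcal{I}_H$ actually coincides with the ideal generated by $\{\frac{\s_H a - a}{a - z_\nu} : \nu < \l\}$, whose associated valuations $\{\gamma_H - v(a - z_\nu)\}_{\nu < \l}$ depend only on $\gamma_H$. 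This immediately forces $\mathcal{I}_{H_1} = \mathcal{I}_{H_2}$ whenever $\gamma_{H_1} = \gamma_{H_2}$, giving well-definedness on $\Ram(\mathcal{E})$.

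For injectivity, suppose $\gamma_{H_1} < \gamma_{H_2}$. I will exhibit a generator $y = \frac{\s_1 a - a}{a - z_\nu}$ of $\mathcal{I}_{H_1}$ (with $v(\s_1 a - a) = \gamma_{H_1}$) that fails to lie in $\mathcal{I}_{H_2}$. Membership of $y$ in $\mathcal{I}_{H_2}$ reduces, by the description above, to the existence of $\mu < \l$ satisfying $v(a - z_\mu) \geq v(a - z_\nu) + (\gamma_{H_2} - \gamma_{H_1})$. Axiom (OS2) together with Remark \ref{Rmk characterization pure elts}(ii) shows that $\{v(a - z_\nu)\}_{\nu<\l}$ is strictly increasing and cofinal in $v(a - K)$, and the defect hypothesis ensures $v(a - K)$ has no maximum; therefore $\nu$ can be chosen so that $v(a - z_\nu) + (\gamma_{H_2} - \gamma_{H_1})$ strictly exceeds every $v(a - z_\mu)$, and the required $\mu$ cannot exist. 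The main obstacle lies precisely here: one reasons with a Dedekind cut rather than an element of $vK$, and the non-existence of $\max v(a - K)$ --- exactly the defect hypothesis --- is what makes the separation possible. This is also the reason that only an inequality, rather than the equality of Corollary \ref{Coro Ram(E) = S_L(a)}, is to be expected in the defect case.
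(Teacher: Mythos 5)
Your third paragraph is correct and is essentially the paper's argument: for a fixed subgroup $H$ one selects $\s_H\in H$ with $v(\s_H a-a)=\gamma_H$ minimal, and then Theorem~\ref{Thm I_H defect depth one} together with Lemma~\ref{Lemma v(sigma^i a - a) geq v(sigma a - a)} (or, as you phrase it, the fact that generators of larger value are multiples of generators of smaller value in a valuation ring) gives $\mathcal{I}_H=\bigl(\tfrac{\s_H a-a}{a-z_\nu}\mid \nu<\l\bigr)$, an ideal determined by $\gamma_H$ alone. Since the extension is purely wild (it is a defect extension with a pure generator, hence immediate by Remark~\ref{Rmk characterization pure elts}), every ramification ideal has the form $\mathcal{I}_H$, and the assignment $\gamma_H\mapsto \mathcal{I}_H$ is a well-defined surjection onto $\Ram(\mathcal{E})$ from a subset of $S_K(a)$. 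This already yields $\#\Ram(\mathcal{E})\le\#S_K(a)$, and it is exactly what the paper does.

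The difficulty is your fourth paragraph. You frame the argument as building an injection $\mathcal{I}_H\mapsto\gamma_H$, and for \emph{that} map to be well-defined you must show $\gamma_{H_1}\neq\gamma_{H_2}\Rightarrow\mathcal{I}_{H_1}\neq\mathcal{I}_{H_2}$; that is what your fourth paragraph attempts. But this claim is not needed for the inequality --- if it held, you would in fact have equality --- and, more to the point, it is false in the generality of the theorem. The step ``$\nu$ can be chosen so that $v(a-z_\nu)+(\gamma_{H_2}-\gamma_{H_1})$ strictly exceeds every $v(a-z_\mu)$'' fails when the initial segment $v(a-K)$ is closed under translation by $\delta:=\gamma_{H_2}-\gamma_{H_1}$; since $\{v(a-z_\mu)\}_\mu$ is cofinal in $v(a-K)$, the desired $\mu$ then always exists and no separation is possible. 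This situation cannot occur in rank one (Archimedean property), which is precisely why the paper can upgrade the inequality to equality only under $\rk(K,v)=1$ (Theorem~\ref{Thm Ram(E) = S_K(a) defect rank one}), or under the alternative hypothesis that $v(a-K)$ is a principal initial segment. In general the paper explicitly leaves equality open. So: your first three paragraphs constitute a correct proof of the stated inequality, matching the paper's route, but your fourth paragraph proves nothing and, were it correct, would settle an open question --- a warning sign that the separation step should be re-examined.
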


\begin{proof}
	Take a generator $a$ of $\O|K$ which is pure over $K$. We also choose a pcs $\{z_\nu\}_{\nu<\l}$ in $K$ with $a$ as a limit as per Remark \ref{Rmk characterization pure elts}. Write $S_K(a) = \{ v(\s_1 a -a), \dotsc , v(\s_m a - a) \}$ where $\s_1, \dotsc , \s_m \in \mathcal{G}:= \Gal(\O|K)$. Denote by $H_i$ the subgroup of $\mathcal{G}$ generated by $\s_i$. It then follows from Lemma \ref{Lemma v(sigma^i a - a) geq v(sigma a - a)} and Theorem \ref{Thm I_H defect depth one} that
	\[ I_{H_i} =  \left( \dfrac{\s_i a - a}{a - z_\nu} \mid  \nu<\l\right). \]
	Take a subgroup $H$ of $\mathcal{G}$. Since $H$ is finite, we can choose $\s_H \in H$ such that $v(\s_H a - a) \leq v(\s a - a)$ for all $\s\in H$. Moreover, choose $i\in\{1, \dotsc , m\}$ such that $v(\s_H a - a) = v(\s_i  a - a)$. It then follows from Theorem \ref{Thm I_H defect depth one} and our preceding discussions that
	\[ \mathcal{I}_H = \left( \dfrac{\s_H a - a}{a - z_\nu} \mid  \nu<\l\right) = \mathcal{I}_{H_i}.   \]
	The assertion thus follows.
\end{proof} 

Unlike the defectless case, it is not evident whether we have equality in Theorem \ref{Thm Ram(E) leq S_K(a) defect}. An easy case when this holds true is when $[\O:K] = p$, since then $\# \Ram(\mathcal{E}) = 1 = \# S_K(a)$ by Theorem \ref{Thm S_K(a)=1 minimal extn}. We now show that equality also holds whenever $\rk (K,v) = 1$.

\begin{Theorem}\label{Thm Ram(E) = S_K(a) defect rank one}
	Let notations and assumptions be as in Theorem \ref{Thm Ram(E) leq S_K(a) defect}. Further assume that $\rk (K,v) = 1$. Then we have
	\[ \#\Ram(\mathcal{E}) = \#S_K(a), \]
	for every generator $a$ of $\O|K$ which is pure over $K$.
\end{Theorem}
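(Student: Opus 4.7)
By Theorem~\ref{Thm Ram(E) leq S_K(a) defect} we already have $\#\Ram(\mathcal{E}) \leq \#S_K(a)$, so the plan is to exhibit that many pairwise distinct ramification ideals. Enumerate $S_K(a) = \{\gamma_1 < \gamma_2 < \dotsb < \gamma_m\}$ in strict order, pick $\sigma_i \in \Gal(\O|K)$ with $v(\sigma_i a - a) = \gamma_i$, and set $H_i := \langle \sigma_i \rangle$. Exactly as in the proof of Theorem~\ref{Thm Ram(E) leq S_K(a) defect}, Theorem~\ref{Thm I_H defect depth one} combined with Lemma~\ref{Lemma v(sigma^i a - a) geq v(sigma a - a)} yields
\[ \mathcal{I}_{H_i} = \left( \dfrac{\sigma_i a - a}{a - z_\nu} \,\Big|\, \nu < \lambda \right), \]
where $\{z_\nu\}_{\nu<\lambda}$ is a pcs in $K$ without a limit in $K$ that has $a$ as a limit. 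The task is reduced to showing that these $m$ ideals are distinct.

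The rank-one hypothesis enters through the fact that ideals of $\mathcal{O}_\O$ are determined by their value sets, which are upward-closed subsets of $(v\O)_{\geq 0}$. Setting $\tau_\nu := v(a - z_\nu)$ and $\tau := \sup_\nu \tau_\nu$, the proof of Theorem~\ref{Thm I_H defect depth one} shows $\gamma_i > \tau_\nu$ for every $\nu$, so $\tau \leq \gamma_1 < \infty$; and since $\{\tau_\nu\}$ is strictly increasing through the limit ordinal $\lambda$, the supremum $\tau$ is not attained. A direct check then shows that the generator values $\gamma_i - \tau_\nu$ strictly decrease to the unattained infimum $\eta_i := \gamma_i - \tau$, which gives
\[ v\mathcal{I}_{H_i} = \{ \delta \in v\O : \delta > \eta_i \}, \]
with the strict inequality reflecting that $\tau$ is not realised by any $\tau_\nu$. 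In particular $0 \leq \eta_1 < \eta_2 < \dotsb < \eta_m$.

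To separate $\mathcal{I}_{H_i}$ from $\mathcal{I}_{H_j}$ for $i<j$, the plan is to produce $\delta \in v\O$ with $\eta_i < \delta < \eta_j$, for any such $\delta$ lies in $v\mathcal{I}_{H_i} \setminus v\mathcal{I}_{H_j}$. By Remark~\ref{Rmk characterization pure elts} the extension $(\O|K,v)$ is immediate, so $v\O = vK$. The crucial ingredient is now that $vK$ must be dense in $\mathbb{R}$: a henselian valued field of rank one with a discrete value group is automatically defectless, since for any finite $L|K$ one has $[L:K] = [\hat{L}:\hat{K}]$ and the completion $\hat{K}$ is a complete discrete valuation field, hence defectless; this would contradict the existence of the defect extension $\mathcal{E}$. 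Thus $vK$ is a non-discrete subgroup of $\mathbb{R}$, and density supplies the required $\delta$.

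The main obstacle is pinning down the strict form of the value set, namely $\{\delta > \eta_i\}$ rather than $\{\delta \geq \eta_i\}$: this depends on the observation that $\tau$ is never attained by $\tau_\nu$, together with the fact that one can always refine the chosen generator of $\mathcal{I}_{H_i}$ by passing further along the pcs. Once this is in hand, the separation of the ideals follows from density of $vK$, and higher rank is precisely where the argument breaks, since an ideal need not be determined by a single cut in the value group.
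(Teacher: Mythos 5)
Your proof is correct, but it takes a genuinely different route from the paper. The paper argues by contradiction: supposing $\mathcal{I}_{H_i} = \mathcal{I}_{H_j}$ with $\gamma_i \ne \gamma_j$, it deduces that $v(a-K)$ must be cofinal in $vK$ (via the Archimedean property), hence $a$ lies in the completion $\widehat{K}$; since $(K,v)$ is henselian of rank one, $K$ is separably algebraically closed in $\widehat{K}$, forcing $a\in K$, a contradiction. You instead compute the value sets explicitly, identifying $v\mathcal{I}_{H_i}$ with the cut $\{\delta \in vK : \delta > \gamma_i - \tau\}$ where $\tau = \sup v(a-K) \le \gamma_1 < \infty$ is not attained, and separate the cuts by density of $vK$ in $\mathbb{R}$ — which you derive from the defect hypothesis through the classical fact that a henselian, discretely valued, rank-one field is defectless. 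Your approach is more quantitative: it locates the ideals on the real line and exhibits an explicit value $\delta$ distinguishing them, rather than deriving an absurdity. The tradeoff is that you import an additional external fact (henselian DVRs of rank one are defectless), whereas the paper relies only on the separable-algebraic closedness of $K$ in $\widehat{K}$; both facts, however, are ultimately cousins, as each rests on the linear disjointness of $K^{\sep}$ and $\widehat{K}$ over $K$. One stylistic quibble: the remark that in higher rank ``an ideal need not be determined by a single cut'' is slightly misleading — ideals of a valuation ring are always determined by their value sets; what fails in higher rank is rather that $vK$ need not be Archimedean, so $\gamma_j - \gamma_i > 0$ no longer guarantees an element of $vK$ strictly between the two cuts.
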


\begin{proof}
	We use the same notations as in the proof of Theorem \ref{Thm Ram(E) leq S_K(a) defect}. It is enough to show that $\mathcal{I}_{H_i} \neq \mathcal{I}_{H_j}$ for all $i\neq j$. Suppose the contrary, that is, $\mathcal{I}_{H_i} = \mathcal{I}_{H_j}$ for some $i\neq j$. Employing Lemma \ref{Lemma v(sigma^i a - a) geq v(sigma a - a)} and Theorem \ref{Thm I_H defect depth one}, we obtain that
	\begin{equation}\label{eqn_1}
		\mathcal{I}_{H_i} =  \left( \dfrac{\s_i a - a}{a - z_\nu} \mid  \nu<\l\right) = \left( \dfrac{\s_j a - a}{a - z_\nu} \mid  \nu<\l\right) = \mathcal{I}_{H_j}.
	\end{equation}
	Assume that $v(\s_i a - a) < v(\s_j a - a)$. Then $v\left( \dfrac{\s_i a - a}{a - z_\nu} \right) < v\left( \dfrac{\s_j a - a}{a - z_\nu}\right)$ for all $\nu<\l$. In light of (\ref{eqn_1}), we then conclude that for every $\nu<\l$, there exists some $\mu<\l$ such that
	\[ v\left( \dfrac{\s_j a - a}{a - z_\mu}\right) < v\left( \dfrac{\s_i a - a}{a - z_\nu}\right).  \]
	In other words, for every $\nu<\l$, there exists some $\mu<\l$ such that
	\begin{equation}\label{eqn_2}
		v(a-z_\mu) - v(a-z_\nu) > v(\s_j a - a) - v(\s_i a - a).
	\end{equation}
Set $\d:= v(\s_j a - a) - v(\s_i a - a) > 0$. Since $v(a-K)$ is an initial segment of $vK$, repeated applications of (\ref{eqn_2}) and the Archimedean property of $vK$ yield that $v(a-K) = vK$. In other words, we obtain that $a$ lies in the completion $\widehat{K}$ of $(K,v)$. However, the facts that $\rk(K,v) = 1$ and $(K,v)$ is henselian imply that $K$ is separable-algebraically closed in $\widehat{K}$. As a consequence, we must have $a\in K$, which contradicts our assumption that $a\notin K$. We conclude that $\mathcal{I}_{H_i} \neq \mathcal{I}_{H_j}$ for all $i\neq j$, and hence $\#\Ram(\mathcal{E}) = \#S_K(a)$ by Theorem \ref{Thm Ram(E) leq S_K(a) defect}.
\end{proof}

A variant of the above argument gives another criterion for equality to hold in the statement of Theorem \ref{Thm Ram(E) leq S_K(a) defect} for a \textit{particular} pure generator. 

\begin{Definition}
	An initial segment $\Sigma$ of an ordered set $\O$ is said to be \textbf{principal} if $\O\setminus \Sigma$ has a minimal element, that is, there exists some $\g\in\O$ such that $\Sigma = \{ \g^\prime\in\O\mid\g^\prime<\g \}$.
\end{Definition}

\begin{Proposition}
	Let $\mathcal{E}=(K(a)|K,v)$ be a Galois defect extension of henselian valued fields. Assume that $a$ is pure over $K$. Further assume that $v(a-K)$ is principal. Then,
	\[ \#\Ram(\mathcal{E}) = \# S_K(a).  \]
\end{Proposition}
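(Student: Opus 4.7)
The plan is to establish the reverse of the inequality $\#\Ram(\mathcal{E}) \leq \#S_K(a)$ already supplied by Theorem \ref{Thm Ram(E) leq S_K(a) defect}, by adapting the argument of Theorem \ref{Thm Ram(E) = S_K(a) defect rank one} with principality playing the role that the Archimedean property played there.

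Following the same set-up, I would write $S_K(a) = \{v(\s_1 a - a), \dotsc, v(\s_m a - a)\}$ with $\s_i \in \mathcal{G} := \Gal(K(a)|K)$, set $H_i := \langle \s_i \rangle$, and fix a pcs $\{z_\nu\}_{\nu<\l}$ in $K$ having $a$ as a limit (Remark \ref{Rmk characterization pure elts}(ii)). Theorem \ref{Thm I_H defect depth one} combined with Lemma \ref{Lemma v(sigma^i a - a) geq v(sigma a - a)} gives $\mathcal{I}_{H_i} = \left( (\s_i a - a)/(a - z_\nu) \mid \nu < \l \right)$. In light of Theorem \ref{Thm Ram(E) leq S_K(a) defect}, it then suffices to show that $\mathcal{I}_{H_i} \neq \mathcal{I}_{H_j}$ whenever $i \neq j$.

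Suppose, for contradiction, that $\mathcal{I}_{H_i} = \mathcal{I}_{H_j}$ with $v(\s_i a - a) < v(\s_j a - a)$, and set $\d := v(\s_j a - a) - v(\s_i a - a)$. Since $\mathcal{E}$ is Galois and immediate (Remark \ref{Rmk characterization pure elts}(ii)(A)), $\s_i a - a$ and $\s_j a - a$ both lie in $K(a)$, so $\d \in vK(a) = vK$. Comparing the values of the generators of the two ideals exactly as in the proof of Theorem \ref{Thm Ram(E) = S_K(a) defect rank one} yields that for every $\nu < \l$ there exists $\mu < \l$ with
\[ v(a - z_\mu) \geq v(a - z_\nu) + \d. \]
Now principality enters. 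Let $\g := \min(vK \setminus v(a - K))$; by hypothesis it exists, and $v(a - K) = \{\g' \in vK : \g' < \g\}$, so $v(a - z_\nu) < \g$ for every $\nu < \l$. Since $\d \in vK$ and $\g - \d < \g$, we have $\g - \d \in v(a - K)$, hence some $z \in K$ satisfies $v(a - z) = \g - \d$. Applying condition OS2 to this $z$ produces $\nu < \l$ with $v(a - z_\nu) \geq \g - \d$; the displayed inequality then gives $\mu < \l$ with $v(a - z_\mu) \geq \g$, contradicting $v(a - z_\mu) < \g$.

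The only genuinely new ingredient is the appeal to principality to furnish a bounding element $\g \in vK$ that the sequence $\{v(a - z_\nu)\}$ cannot overshoot by a prescribed positive increment $\d \in vK$; everything else is a transplant of the rank-one argument, so I expect no further obstacles.
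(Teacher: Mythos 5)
Your proof is correct and follows essentially the same strategy as the paper's: both start from the inequality of Theorem \ref{Thm Ram(E) leq S_K(a) defect}, reduce to showing $\mathcal{I}_{H_i}\neq\mathcal{I}_{H_j}$ for $i\neq j$, extract from the assumed equality of ideals the statement that for every $\nu<\lambda$ some $\mu<\lambda$ satisfies $v(a-z_\mu)-v(a-z_\nu)\geq\delta$, and then derive a contradiction with principality by producing a $\nu$ with $\gamma-\delta\leq v(a-z_\nu)<\gamma$. Your explicit verification that $\delta\in vK$ via immediateness, and your use of OS2 in place of the paper's appeal to cofinality of $\{v(a-z_\nu)\}$ in $v(a-K)$, are both minor rephrasings of the same facts.
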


\begin{proof}
	We use the same notations as in the proof of Theorem \ref{Thm Ram(E) leq S_K(a) defect}. As observed in the proof of Theorem \ref{Thm Ram(E) = S_K(a) defect rank one}, it is enough to show that $\mathcal{I}_{H_i} \neq \mathcal{I}_{H_j}$ for all $i\neq j$. If equality holds for some $i\neq j$, then assuming $\d:= v(\s_j a - a) - v(\s_i a - a) > 0$, we obtain in (\ref{eqn_2}) that for each $\nu<\l$, there exists some $\mu<\l$ such that
	\[   v(a-z_\mu) - v(a-z_\nu) > \d.  \]
	Since $v(a-K)$ is principal, we can take some $\g\in vK$ such that $v(a-K) = \{ \g^\prime\in vK \mid \g^\prime<\g \}$. Then $\g-\d\in v(a-K)$. The cofinality of the sequence $\{v(a-z_\nu)\}_{\nu<\l}$ in $v(a-K)$ implies that there exists some $\nu<\l$ such that $\g-\d \leq v(a-z_\nu) < \g$. But this contradicts (\ref{eqn_2}). 
\end{proof}


\section{Some examples}

The following example illustrates that the cardinality of the set $\# S_K(a)$ depends on the element $a$ and not on the field extension $(K(a)|K,v)$. We will exhibit this by constructing an extension with two different generators $a$ and $b$ such that $\#S_K(a) \neq \#S_K(b)$. Moreover, we will construct the extension to be defectless and Galois. This will demonstrate that the conclusions of Theorem \ref{Thm Ram(E) = S_K(a)} and Corollary \ref{Coro Ram(E) = S_L(a)} are not true for arbitrary defectless Galois extensions.

\begin{Example}\label{Eg S_K(a) neq S_K(b)}
	Let $k$ be field with $\ch k = p>2$. Assume that $k$ is Artin-Schreier closed. Set $(K,v)$ to be the henselization of $k(t)$ equipped with the $t$-adic valuation. Take $\a,\b \in \overline{K}$ such that $\a^p - \a = 1/t$ and $\b^p - \b = \a + 1/t$. Then
	\[  v\a = -\dfrac{1}{p}, \, vK(\a) = \dfrac{1}{p}\ZZ \text{ and } \deg_K(\a) = p.  \]
	Now $\a\in K(\b)$ and hence $\deg_K(\b) \leq p^2$. Observe that $(\b-\a)^p - (\b-\a) = \a$ and hence $v(\b-\a) = -1/p^2$. It follows that $\b-\a \notin K(\a)$. As a consequence, 
	\[ K(\a) \subsetneq K(\b) \text{ and } \deg_K(\b) = p^2.  \]
	Take $\tau\in \Gal(\overline{K}|K)$ such that $\tau\a = \a+1$. Set $b:= \a+ 1/t$. Then $\tau b - b = 1$. The base field $k$ being Artin-Schreier closed, the polynomial $X^p - X - 1$ splits completely over $K$. It now follows from [\ref{Nart Novacoski depth of A-S defect towers}, Lemma 2.9] that 
	\[ K(\b)|K \text{ is a Galois extension}. \] 
	Set $\mathcal{G}:= \Gal(K(\b)|K)$. For any $\s\in\mathcal{G}$ we have that $\s\b^p - \s\b = \s\a + 1/t$ and hence 
	\[ (\s\b -\b)^p - (\s\b-\b) = \s\a-\a.  \]
	The fact that $\a$ is an Artin-Schreier generator over $K$ implies that $\s\a-\a\in\FF_p$. Consequently, $\s\b-\b \in k$ and hence $v(\s\b-\b) = 0$ for all $\s\in\mathcal{G}$. We have thus shown that
	\[ \#S_K(\b) = 1.  \]
	Set $\g:= \b-\a^2$. Then $\g\in K(\b)$. Observe that
	\[ \g^p - \g = \dfrac{1}{t} - \dfrac{1}{t^2} + \left( 1-\dfrac{2}{t} \right) \a.  \]
	Thus $\a\in K(\g)$. Moreover, the fact that $\b\notin K(\a)$ implies that $\g\notin K(\a)$. Thus
	\[ K(\b) = K(\g).  \]
	Take any $\s\in\mathcal{G}$. Then $\s\g = \s\b - \s\a^2$ and hence
	\[ \s\g - \g = (\s\b-\b) - (\s\a^2 - \a^2).   \]
	Observe that $\s\a-\a = i \in \FF_p$. If $i = 0$ then $v(\s\g-\g) = v(\s\b-\b)=0$. Else, $\s\g-\g = (\s\b-\b) - (2i\a+i^2)$, and hence $v(\s\g-\g) = -1/p$ by the triangle inequality. Thus
	\[ \#S_K(\g) = 2. \]
\end{Example}

\pars This next example illustrates that the inequality in the statement of Theorem \ref{Thm S_K(a) bound a pure} may be strict. 

\begin{Example}\label{Eg strict inequality in Thm S_K(a) bound pure}
	Let $k$ be a field with $\ch k = p > 0$ such that $k$ is not perfect. Denote by $(K,v)$ the henselization of $k(t)$ equipped with the $t$-adic valuation. Then $vK = \ZZ$ and $Kv = k$. We denote by $v$ the extension of $v$ to $\overline{K}$ as well. Take $c\in k\setminus k^p$. Let $\th$ be a root of the polynomial 
	\[  X^p - t^{p-1} X - c \in K[X].   \]
	Since $vc = 0$, this implies that $v(\th^p - t^{p-1}\th) =0$. If $v\th > 0$ then $v(\th^p - t^{p-1}\th)>0$. On the other hand, if $v\th <0$ then $v\th^p < v(t^{p-1}\th)$ and hence $v(\th^p - t^{p-1}\th) = v\th^p <0$ by the triangle inequality. It follows that $v\th = 0$. Then $\th v$ is a root of the polynomial $X^p - c \in k[X]$. The fact that $c\notin k^p$ implies that $\th v$ is purely inseparable over $k$ with $\deg_k (\th v) = p$. As a consequence, we conclude from the Fundamental Inequality that
	\[ \deg_K(\th) = \deg_k(\th v) = p, \, vK(\th) = \ZZ, \, K(\th)v = k(\th v).  \]
	Take a root $a$ of the polynomial
	\[  f(X):= X^p - X - \dfrac{1+\th}{t} \in K(\th)[X].  \]
	Then $va = -1/p$ by the triangle inequality. Employing the Fundamental Inequality again, we conclude that
	\[  \deg_{K(\th)}(a) = (vK(\th,a):vK(\th)) = p, \, vK(\th,a) = \dfrac{1}{p}\ZZ, \, K(\th,a)v = K(\th)v = k(\th v).  \]
	As a consequence, $[K(\th,a):K] = p^2$. Now observe that $\th\in K(a)$ by definition. So either $\deg_K(a) = p^2$ or $K(\th) = K(a)$. However the latter is not possible since $-1/p = va \in vK(a)\setminus vK(\th)$. It follows that $K(a) = K(\th,a)$ and hence
	\begin{equation}\label{Eqn 10}
		\deg_K(a) = p^2, \, (vK(a):vK) = p, \, K(a)v = k(\th v) \text{ is purely inseparable over }Kv.
	\end{equation}
	Thus $(K(a)|K,v)$ is a defectless extension and hence $a$ admits a complete distinguished chain over $K$. Take some $z\in\overline{K}$ such that $v(a-z)> va$. Then $va = vz  = -1/p$ and $\dfrac{a}{z}v = 1v$. Observe that
	\[ \dfrac{a^p}{z^p} - \dfrac{1}{z^{p-1}}\dfrac{a}{z} = \dfrac{1+\th}{t z^p}.  \]
	Taking residues and observing that $ 0< v \dfrac{1}{z^{p-1}}$, we obtain that
	\[ \dfrac{a^p}{z^p}v = \dfrac{1+\th}{t z^p}v.  \]
	Since $\dfrac{a}{z}v = 1v$ and $v(tz^p) = 0 \leq v(1+\th)$, we have that
	\[ (tz^p)v = (1+\th)v = 1v+\th v. \]
	It follows that $\th v \in K(z)v$ and hence $[K(z)v : Kv] \geq p$. Moreover, the fact that $vz = -1/p$ implies that $(vK(z):vK) \geq p$. From the Fundamental Inequality, we conclude that $\deg_K(z) \geq p^2 = \deg_K(a)$. In other words, 
	\[ \deg_K(z) < \deg_K(a) \Longrightarrow v(a-z) \leq va = \max v(a-K),  \]
	where the last assertion follows from the triangle inequality. We have thus shown that $(a,0)$ is a distinguished pair and hence
	\begin{equation}\label{Eqn 11}
		\ell(a) = 1.
	\end{equation}
	We now take some $\s\in G:= \Gal(\overline{K}|K)$ such that $\s a \neq a$. If $\s\th = \th$, then $\s \in \Gal(\overline{K}|K(\th))$ and hence $\s a$ is a $K(\th)$-conjugate of $a$. Since the minimal polynomial $f(X)$ of $a$ over $K(\th)$ is an Artin-Schreier polynomial, we conclude that $\s a - a \in \FF_p^{\times}$ and hence
	\begin{equation}\label{Eqn 12}
		v(\s a - a) = 0.
	\end{equation}
	If $\s \th \neq \th$, then we consider the following equations:
	\begin{align*}
		a^p - a &= \dfrac{1+\th}{t},\\
		\s a^p - \s a &= \dfrac{1 + \s \th}{t}.
	\end{align*} 
	As a consequence, 
	\begin{equation}\label{Eqn 13}
		(\s a -a)^p - (\s a - a) = \dfrac{\s\th - \th}{t}.
	\end{equation}
	Again, consider the following equations:
	\begin{align*}
		\th^p - t^{p-1}\th &= c,\\
		\s\th^p - t^{p-1}\s\th &=c.
	\end{align*}
	It follows that $(\s\th - \th)^p = t^{p-1}(\s\th - \th)$ and hence $v(\s\th - \th) = 1$. Thus $v \dfrac{\s\th - \th}{t} = 0$. Employing the triangle inequality in (\ref{Eqn 13}), we obtain that
	\begin{equation}\label{Eqn 14}
		v(\s a - a) = 0.
	\end{equation}
	From Equations (\ref{Eqn 12}) and (\ref{Eqn 14}) we conclude that $\# S_K(a) = 1$. Moreover $(K(a)|K,v)$ is purely wild by (\ref{Eqn 10}) and $a$ is pure over $K$ by (\ref{Eqn 11}). Thus,
	\[ \# S_K(a) = 1 < 2 = v_p \deg_K(a),  \]
	which shows that the inequality in Theorem \ref{Thm S_K(a) bound a pure} is not an equality. 
\end{Example}

\pars Modifying the above example slightly, we construct an example that illustrates that the bounds obtained in Theorem \ref{Thm S_K(a) bound a pure} and Corollary \ref{Coro S_K(a)=1 a pure K(a)|L minimal} are sharp. It also illustrates that the answer to Problem \ref{Qn pure implies S_K(a)=1} is not positive in general, even in the defectless case.

\begin{Example}\label{Eg bound sharp a pure}
	Let $k, \, (K,v)$ and $\th$ be as in Example \ref{Eg strict inequality in Thm S_K(a) bound pure}. Assume further that $k$ is not Artin-Schreier closed. Take $d\in k$ such that $X^p - X - d$ is irreducible over $k$. Take $\a\in\overline{K}$ such that $\a^p - \a = d$. Then $v\a = 0$ and $\a v$ is a root of the Artin-Schreier polynomial $X^p-X-d$. It follows that $K(\a)v= k(\a v)$ is a separable degree $p$ extension over $Kv$. Thus, 
	\[  (K(\a)|K,v) \text{ is a tame extension of degree }p. \]
	It was further observed in Example \ref{Eg strict inequality in Thm S_K(a) bound pure} that $K(\th)v = k(\th v)$ is a purely inseparable degree $p$ extension over $Kv$ and hence
	\[ (K(\th)|K,v) \text{ is a purely wild extension of degree $p$}.  \]
	Hence $[K(\th,\a):K] = p^2$ by Lemma \ref{Lemma tame purely wild valn disjoint}. We set
	\[  a:= \th+\a. \]
	Then $a^p - a = c+d+(t^{p-1}-1)\th$. As a consequence, $\th\in K(a)$. It follows that $\a\in K(a)$ and hence $K(a) = K(\th,\a)$. Thus $\deg_K(a) = p^2$. Moreover, it was also observed in Example \ref{Eg strict inequality in Thm S_K(a) bound pure} that $v(\s\th-\th) = 1$ for any $\s\in G:= \Gal(\overline{K}|K)$ not fixing $\th$. On the other hand, if $\s\a\neq \a$, then the fact that $\a$ satisfies an Artin-Schreier polynomial yields that $v(\s\a-\a) = 0$. It follows that $S_K(a) = \{0,1\}$ and hence 
	\[ \# S_K(a) = 2.  \]
	Observe that $L:= K(a)\sect K^r = K(\a)$ and hence 
	\[  \#S_K(a) = v_p \deg_L(a)+1. \]
	We are thus done if we can show that $a$ is pure over $K$. Observe that $v\th = 0 = v\a$ and hence $va\geq 0$. Take some $z\in\overline{K}$ such that $v(a-z) > va$. Then $av = zv$. Consider the relation $a^p - a = c+d+(t^{p-1}-1)\th$. Taking residues and employing the condition $av = zv$, we obtain that
	\[ zv^p - zv = (c+d)v -\th v.  \]
	As a consequence, $\th v \in K(z)v$. We also have the relation $a^p - t^{p-1}a = (c+d) + (1-t^{p-1})\a$. Taking residues we obtain that
	\[  zv^p = (c+d)v + \a v, \]
	and hence $\a v\in K(z)v$. It follows that
	\[ \deg_K(z) \geq \deg_{Kv}(zv) \geq [Kv(\a v, \th v):Kv] = p^2, \]
	where the last equality follows from the linear disjointness of the separable extension $Kv(\a v)|Kv$ and the purely inseparable extension $Kv(\th v)|Kv$. We have thus obtained that
	\[ v(a-z)>vz \Longrightarrow \deg_K(z) \geq p^2 = \deg_K(a).  \]
	Thus $(a,0)$ is a distinguished pair and hence $a$ is pure over $K$.  
\end{Example}

\begin{Remark}
	We can construct imperfect fields which are not Artin-Schreier closed in the following way. Take a field $F$ which is not Artin-Schreier closed and a variable $X$. Then $F(X)$ has the desired properties. 
\end{Remark}

\appendix
\renewcommand\thesection{A}
\section{}\label{sec:appendix_section}
\renewcommand{\theappendixlemma}{\Alph{section}.\arabic{appendixlemma}}

We want to present some relations of the $j$-invariant with other objects in the literature. For that purpose, we will introduce some notation and definitions. Let $v$ be a valuation on $\overline K$ and set $\Gamma=v\overline K$.

For all $s\in \NN$, the $s$-th Hasse-Schmidt derivative $\partial_s$ on $K[x]$ is defined by:
$$
f(x+y)=\sum\nolimits_{0\le s}(\partial_s f)y^s \quad\mbox{for all } \,f\in K[x],
$$
where $y$ is another indeterminate.

Take a valuation $\mu$ on $K[x]$. If $f\in K[x]$ is non-constant, we define
$$
\epm(f)=\max\left\{\dfrac{\mu(f)-\mu(\ps{f})}s\ \Big|\ s\in\NN\right\}.
$$
For $f\in K[x]$ we denote
\[
I_\mu(f)=\left\{s\in\{1,\ldots,\deg(f)\}\mid \epsilon_\mu(f)=\dfrac{\mu(f)-\mu(\ps{f})}s\right\}.
\]

Let us recall [\ref{Novacoski key poly and min pairs}, Proposition 3.1], which clarifies the meaning of $\epm(f)$.

\begin{Theorem}\label{nov}
	Let $\overline\mu$ be an arbitrary extension  to $\kbx$ of $\mu$. Then, for every non-constant $f\in \kx$ we have 
	$$
	\epm(f)=\max\{\overline\mu(x-a)\mid a\in \op{Z}(f)\},
	$$
	where $\op{Z}(f)$ is the set of roots of $f$ in $\kb$. 
\end{Theorem}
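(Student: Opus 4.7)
The plan is to reduce to the monic case, factor $f$ over $\kb$, and then push everything through the ultrametric inequality on the explicit expansion of $\ps f$ coming from the Hasse-Schmidt identity. Since both sides of the desired equality are unchanged if we scale $f$ by a constant $c\in K^\times$ (because $\ps(cf)=c\ps(f)$, so $\mu(cf)-\mu(\ps(cf))=\mu(f)-\mu(\ps f)$, while the roots of $f$ and $cf$ are identical), I would assume $f$ is monic. Writing $f(x)=\prod_{i=1}^{n}(x-a_i)$ with $a_i\in\kb$, substituting $x\mapsto x+y$ and expanding yields
\[
f(x+y)=\prod_{i=1}^n\bigl((x-a_i)+y\bigr)=\sum_{k=0}^n y^k\!\!\sum_{\substack{S\subseteq\{1,\dots,n\}\\|S|=k}}\prod_{j\notin S}(x-a_j),
\]
so by the defining property of the Hasse-Schmidt derivatives,
\[
\ps[k]f=\sum_{|S|=k}\prod_{j\notin S}(x-a_j).
\]

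Set $\gamma_i:=\omu(x-a_i)$ and reorder the roots so that $\gamma_1\geq\gamma_2\geq\ldots\geq\gamma_n$; note $\mu(f)=\omu(f)=\sum_i\gamma_i$. Applying the ultrametric inequality to the displayed sum gives
\[
\omu(\ps[k]f)\geq\min_{|S|=k}\sum_{j\notin S}\gamma_j=\sum_{j>k}\gamma_j,
\]
since the minimum of the sums $\sum_{j\notin S}\gamma_j$ is realized by removing the $k$ largest $\gamma_i$'s. Consequently
\[
\frac{\mu(f)-\mu(\ps[k]f)}{k}\leq\frac{\sum_{i\leq k}\gamma_i}{k}\leq\gamma_1,
\]
which shows $\epm(f)\leq\gamma_1=\max\{\omu(x-a)\mid a\in\op{Z}(f)\}$, establishing one inequality for every $k$.

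The opposite inequality, and hence the main obstacle, is to exhibit a specific index $k$ at which the ultrametric inequality above becomes an equality. Here I would let $m$ be the largest index with $\gamma_m=\gamma_1$, and take $k=m$. Among the $\binom{n}{m}$ summands of $\ps[m]f$, the summand indexed by $S=\{1,\dots,m\}$ has valuation $\sum_{j>m}\gamma_j$; any other $S$ of size $m$ must contain some index $j_0>m$ with $\gamma_{j_0}<\gamma_1$, so $\sum_{j\in S}\gamma_j<m\gamma_1=\sum_{j\leq m}\gamma_j$, and the corresponding summand has strictly larger valuation. Because the minimum is attained by a unique summand, the ultrametric inequality is sharp and $\omu(\ps[m]f)=\sum_{j>m}\gamma_j$, whence
\[
\epm(f)\geq\frac{\mu(f)-\mu(\ps[m]f)}{m}=\frac{m\gamma_1}{m}=\gamma_1.
\]
Combining both directions yields $\epm(f)=\gamma_1=\max\{\omu(x-a)\mid a\in\op{Z}(f)\}$, as desired. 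The delicate point throughout is this uniqueness-of-minimum argument, which is exactly what forces the right choice of $k$ and makes the inequality an equality; everything else is a bookkeeping computation on the Hasse-Schmidt expansion.
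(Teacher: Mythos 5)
Your proof is correct. Note that the paper itself offers no proof of this statement: it is recalled verbatim from [Novacoski, \emph{Key polynomials and minimal pairs}, Proposition 3.1], so there is no in-paper argument to compare against. Your argument — factoring $f$ monically over $\overline K$, reading the Hasse--Schmidt derivatives off the expansion $\partial_k f=\sum_{|S|=k}\prod_{j\notin S}(x-a_j)$, bounding $\overline\mu(\partial_k f)$ from below by the ultrametric inequality, and then choosing $k=m$ (the multiplicity of the maximal $\gamma_i$) so that a unique summand realises the minimum and the bound becomes an equality — is exactly the standard proof of this fact and is essentially what appears in the cited source. The key observation you correctly isolate is that for $S\neq\{1,\dots,m\}$ of size $m$, any index $j_0\in S$ with $j_0>m$ contributes $\gamma_{j_0}<\gamma_1$, forcing $\sum_{j\notin S}\gamma_j>\sum_{j>m}\gamma_j$ strictly; that is what makes the non-Archimedean inequality sharp at $s=m$. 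The only minor caveat, not a gap, is that the argument implicitly assumes $\mu(f)<\infty$; if $f\in\operatorname{supp}(\mu)$ both sides become $\infty$ and a one-line remark would close the degenerate case.
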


We say that $a\in \op{Z}(f)$  is an \textbf{optimizing root}  of $f$ if it satisfies  $\overline\mu(x-a)=\epm(f)$ for some extension $\omu$ of  $\mu$ to $\kbx$.

\begin{Definition} \label{absKP}
	A monic $Q\in\kx$ is said to be an \textbf{abstract key polynomial} for $\mu$ if for every non-constant $f\in\kx$ we have
	\[
	\deg f<\deg Q\Longrightarrow \epm(f)<\epm(Q).
	\]
	We denote by $\Psi(\mu)$ the set of all abstract key polynomials for $\mu$.
\end{Definition}

For every polynomial $q\in\kx\setminus K$ consider the truncated function $\mu_q$ on $\kx$, defined as follows  on $q$-expansions:
\[
f=\sum\nolimits_{i\ge0}f_i\, q^i,\ \deg f_i<\deg q\ \Longrightarrow\ \mu_q(f)=\min\{\mu(f_i\,  q^i)\mid i\ge0\}.
\]

For every abstract key polynomial $Q$, the truncated function $\mu_Q$ is a valuation such that $\mu_Q\le\mu$ [\ref{Nova Spiva key pol pseudo convergent}, Proposition 2.6].

The {\bf graded algebra} of $\mu$ is the integral domain $\ggm=\bigoplus_{\alpha \in \Gamma_\mu}\mathcal P_\alpha/\mathcal P_\alpha^+$, where
\[
\mathcal P_\alpha=\{f\in K[x]\mid \mu(f)\geq \a\}\supseteq\mathcal P_\alpha^+=\{f\in K[x]\mid \mu(f)> \a\}.
\]

Set ${\rm supp}(\mu)=\{f\in K[x]\mid v(f)=\infty\}$. Every $f\in K[x]\setminus {\rm supp}(\mu)$ has a homogeneous \textbf{initial coefficient}  $\inm f\in\ggm$, defined as the image of $f$ in $\mathcal P_{\mu(f)}/\mathcal P_{\mu(f)}^+\sub \ggm$.

\begin{Definition}\label{keypolyno}
	A monic $\phi\in K[x]$ is a \textbf{key polynomial}  for $\mu$ if  $(\inm\phi)\ggm$ is a homogeneous prime ideal containing no initial coefficient \ $\inm f$ with  $\deg f< \deg \phi$.
\end{Definition}

We denote by $\kpm$ the set of all key polynomials for $\mu$. These polynomials are  necessarily irreducible in $\kx$.

\subsection{Characterizations of the $j$-invariant in terms of key polynomials}
Let $\mu$ be any valuation on $K[x]$. If $Q$ is an abstract key polynomial for $\mu$, then we define the $j$-invariant of $Q$ as 
\[ j(Q):= j_{\mu_Q}(Q).  \]
The next easy result allows us to connect the $j$-invariant with the objects studied in \ref{Nova Spiva key pol pseudo convergent}. 
\begin{Proposition}\label{Mainporop}
	$j(Q)\in I_\mu(Q)$.
\end{Proposition}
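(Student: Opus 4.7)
The plan is to compute $\mu(\partial_j Q)$ by factoring $Q$ over $\overline{K}$ and isolating the unique summand of $\partial_j Q$ of minimal $\overline{\mu}$-value, where $j:=j(Q)$. First I would fix an extension $\overline{\mu}$ of $\mu$ to $\overline{K}[X]$ and, by Theorem \ref{nov}, an optimizing root $a\in Z(Q)$ with $\overline{\mu}(X-a)=\epsilon_\mu(Q)=:\gamma$. Since $(a,\gamma)$ is a pair of definition for the truncation $\mu_Q$, we have $j=j_{\mu_Q}(Q)=\#\{z\in Z(Q)\mid v(a-z)\ge\gamma\}$. Writing $Q(X)=\prod_{i=1}^{n}(X-z_i)$ in $\overline{K}[X]$ and relabeling so that $v(a-z_i)\ge\gamma$ holds exactly for $i\le j$, the ultrametric inequality applied to $X-z_i=(X-a)+(a-z_i)$ combined with the optimality bound $\overline{\mu}(X-z_i)\le\gamma$ yields
\[
\overline{\mu}(X-z_i)=\min\{\gamma,\,v(a-z_i)\}=\begin{cases}\gamma,&i\le j,\\ v(a-z_i)<\gamma,&i>j,\end{cases}
\]
and hence $\mu(Q)=j\gamma+\sum_{i>j}v(a-z_i)$.

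Second, the Taylor expansion $Q(X+Y)=\prod_i((X-z_i)+Y)$ yields the product formula
\[
\partial_s Q(X)=\sum_{|T|=s}\ \prod_{i\notin T}(X-z_i),\qquad T\subseteq\{1,\ldots,n\}.
\]
For $s=j$ each summand has $\overline{\mu}$-value $\mu(Q)-\sum_{i\in T}\overline{\mu}(X-z_i)$, so minimizing it over $|T|=j$ is equivalent to maximizing $\sum_{i\in T}\overline{\mu}(X-z_i)$. The strict inequality $\overline{\mu}(X-z_i)<\gamma$ for $i>j$ forces $T=\{1,\ldots,j\}$ to be the \emph{unique} maximizer, attaining the value $j\gamma$. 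Uniqueness rules out any cancellation of initial forms in the graded algebra of $\overline{\mu}$, so
\[
\overline{\mu}(\partial_j Q)=\mu(Q)-j\gamma.
\]

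Finally, since $\partial_j Q\in K[X]$, $\mu(\partial_j Q)=\overline{\mu}(\partial_j Q)=\mu(Q)-j\gamma$, whence
\[
\frac{\mu(Q)-\mu(\partial_j Q)}{j}=\gamma=\epsilon_\mu(Q),
\]
which is exactly the statement $j\in I_\mu(Q)$. The only delicate point in this approach is the uniqueness of the maximizing subset $T$ at $s=j$, which prevents cancellation and pins down $\overline{\mu}(\partial_j Q)$ on the nose; this uniqueness is precisely what the strict inequality $v(a-z_i)<\gamma$ for $i>j$ — encoded in the definition of $j_{\mu_Q}(Q)$ — guarantees. (For $s<j$ the minimum is attained by $\binom{j}{s}>1$ subsets, so the same argument does not directly give equality, which is consistent with the proposition asserting only $j\in I_\mu(Q)$ rather than a full description of $I_\mu(Q)$.)
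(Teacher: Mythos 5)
Your proof is correct and is essentially the content of the result the paper cites: the paper defers to [Novacoski, \emph{Key polynomials and minimal pairs}, Proposition 3.1] (Theorem \ref{nov} above), whose proof is exactly the Hasse–Schmidt product-formula computation you have written out in full, with $j(Q)$ identified with the number of optimizing roots via the pair of definition $(a,\epsilon_\mu(Q))$ for $\mu_Q$. The argument and the uniqueness-of-minimizing-subset step are sound, so the only difference is that you have made the cited proof self-contained rather than invoking it.
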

\begin{proof}
	This follows from [\ref{Novacoski key poly and min pairs}, Proposition 3.1]. Indeed, in the proof of [\ref{Novacoski key poly and min pairs}, Proposition 3.1] it is shown that if $r$ is the number of optimizing roots of $Q$, then $r\in I_\mu(Q)$. On the other hand, it follows from the definition that a root $a'$ of $Q$ is an optimizing root if and only if $v(a-a')\geq \epsilon(Q)$. Hence $j(Q)\in I_\mu(Q)$.
\end{proof}

\begin{Remark}
	It follows from Theorem \ref{Thm j = [K(a):ICF] min pair of defn} and the remark following Theorem \ref{Thm bound of ICF} that 
	\[ j(Q) = p^s \text{ for some } s\in\NN.  \]
\end{Remark}

\begin{Corollary}
	Assume that $Q$ and $Q'$ are two key polynomials for $\mu$, of the same degree, for which $\epsilon(Q)<\epsilon(Q')$. Then
	\[
	j(Q)\geq j(Q').
	\]
\end{Corollary}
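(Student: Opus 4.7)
The plan is to reinterpret both $j(Q)$ and $j(Q')$ as $j$-invariants of one and the same valuation $w$ on $K(X)$, and then equate them using the Implicit Constant Field formula of Theorem \ref{Thm j = [K(a):ICF] min pair of defn}. To set up, I would fix an extension $\overline\mu$ of $\mu$ to $\overline{K}[X]$ and, by Theorem \ref{nov}, choose optimizing roots $a\in Z(Q)$ and $b\in Z(Q')$ so that $\overline\mu(X-a)=\epsilon(Q)$ and $\overline\mu(X-b)=\epsilon(Q')$. Expanding $\overline\mu(X-b)=\overline\mu((X-a)-(a-b))$ and exploiting the strict inequality $\epsilon(Q)<\epsilon(Q')$ via the ultrametric law forces $v(a-b)=\epsilon(Q)$. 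Thus $(a,\epsilon(Q))$ and $(b,\epsilon(Q))$ are two pairs of definition for the \emph{same} valuation
\[
w:=v_{a,\epsilon(Q)}=v_{b,\epsilon(Q)}
\]
on $K(X)$.

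The central step is to verify that both of these pairs are \emph{minimal} pairs of definition for $w$. For $(a,\epsilon(Q))$, I would suppose some $c\in\overline{K}$ with $\deg_K(c)<n$ satisfies $v(a-c)\geq \epsilon(Q)$; the triangle inequality then gives $\overline\mu(X-c)\geq \epsilon(Q)$, while Theorem \ref{nov} applied to the minimal polynomial $m_c$ of $c$ together with the key polynomial property of $Q$ yields $\overline\mu(X-c)\leq \epsilon(m_c)<\epsilon(Q)$, a contradiction. An essentially identical argument, starting from $v(b-c)\geq\epsilon(Q)$ and using $v(a-b)=\epsilon(Q)$ to reduce to the inequality $v(a-c)\geq\epsilon(Q)$ just handled, shows that $(b,\epsilon(Q))$ is minimal as well. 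This is the step I expect to require the most care, since it is precisely where the key polynomial hypothesis on both $Q$ and $Q'$ gets used.

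Once minimality is in place, Theorem \ref{Thm j = [K(a):ICF] min pair of defn} yields
\[
j_w(Q)=[K(a):IC_K(w)] \qquad\text{and}\qquad j_w(Q')=[K(b):IC_K(w)].
\]
Since $\deg Q=\deg Q'=n$, we have $[K(a):K]=[K(b):K]=n$, so both indices equal $n/[IC_K(w):K]$ and therefore $j_w(Q)=j_w(Q')$. By definition $j_w(Q)=j(Q)$, and computing $j_w(Q')$ via the pair $(b,\epsilon(Q))$ gives
\[
j_w(Q')=\#\{b'\in Z(Q')\mid v(b-b')\geq \epsilon(Q)\}\geq \#\{b'\in Z(Q')\mid v(b-b')\geq \epsilon(Q')\}=j(Q'),
\]
the last inequality coming from the set inclusion forced by $\epsilon(Q)<\epsilon(Q')$. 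Chaining these gives $j(Q)=j_w(Q)=j_w(Q')\geq j(Q')$, as desired.
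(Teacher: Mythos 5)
Your proof is correct, but it takes a genuinely different route from the paper's, which is a one-line citation to [\ref{Nart Novacoski min limit key pols}, Lemma~3.3] together with Proposition~\ref{Mainporop} ($j(Q)\in I_\mu(Q)$); that argument reduces the comparison to a structural fact about the sets $I_\mu(Q)$ for key polynomials of equal degree and increasing $\epsilon$. You instead realize the truncations $\mu_Q$ and $\mu_{Q'}$ as monomial valuations $v_{a,\epsilon(Q)}$ and $v_{b,\epsilon(Q')}$ at optimizing roots, use $\epsilon(Q)<\epsilon(Q')$ and the ultrametric law to pin down $v(a-b)=\epsilon(Q)$ so that $a$ and $b$ give the same valuation $w=v_{a,\epsilon(Q)}$, verify minimality of both pairs via the abstract-key-polynomial inequality and Theorem~\ref{nov}, and then apply Theorem~\ref{Thm j = [K(a):ICF] min pair of defn} twice: $j_w(Q)=[K(a):IC_K(w)]$ and $j_w(Q')=[K(b):IC_K(w)]$, which are equal since $\deg_K(a)=\deg_K(b)$. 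The last step, $j_w(Q')\geq j_{\mu_{Q'}}(Q')=j(Q')$, is an honest set-inclusion because the truncation level $\epsilon(Q)$ is lower than $\epsilon(Q')$. This buys a self-contained argument internal to the ICF machinery developed in the paper, at the cost of being longer than the citation; it also gives a little more, namely the intermediate identity $j_w(Q)=j_w(Q')$, which the cited lemma does not obviously provide. One small point worth making explicit is that the minimality check uses the abstract-key-polynomial property $\deg f<\deg Q\Rightarrow\epsilon(f)<\epsilon(Q)$ of Definition~\ref{absKP}, while the corollary is phrased for key polynomials in the sense of Definition~\ref{keypolyno}; the implication key $\Rightarrow$ abstract key for $\mu_Q$ is what makes your step legitimate, and it is worth a sentence.
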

\begin{proof}
	It follows from [\ref{Nart Novacoski min limit key pols}, Lemma 3.3] and Proposition \ref{Mainporop}.
\end{proof}

\begin{Remark}
	Every valuation-transcendental valuation can be realized as $\mu_Q$ for some key polynomial $Q$ as above.
\end{Remark}

For each $n\in\NN$ set $\Psi_n(\mu)$ to be the set of abstract key polynomials for $\mu$ of degree $n$. Since for every $Q\in \Psi_n(\mu)$ we have $\mu_Q\leq \mu$, and $\{\nu\mid \nu\leq \mu\}$ is totally ordered, we can talk about final segments, cofinal subsets, etc..

\begin{Remark}
	The number $b(Q):= \min I_\mu(Q)$ plays a crucial role in Spivakovsky's approach to the problem of Resolution of Singularities. 
\end{Remark}

\begin{Proposition}\label{Propostablep}
	Assume that $\{\mu_Q\mid Q\in \Psi_n(\mu)\}$ does not have a maximum. Then there exist $Q_0\in\Psi_n$ and $s\in\NN$ such that
	\[
	b(Q)=j(Q)=p^{s}\mbox{ for every }Q\in \Psi_n\mbox{ with }\mu(Q)\geq \mu(Q_0).
	\]
\end{Proposition}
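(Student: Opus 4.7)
The proof will proceed in two main stages: first stabilizing $j(Q)$ along a cofinal piece of $\Psi_n(\mu)$, and then ruling out every $s'<p^s$ from $I_\mu(Q)$ once that stabilization has occurred. Note at the outset that Proposition \ref{Mainporop} gives $j(Q)\in I_\mu(Q)$ for every $Q\in\Psi_n(\mu)$, and hence automatically $b(Q)\le j(Q)$; by the remark following Proposition \ref{Mainporop}, we may write $j(Q)=p^{s(Q)}$ with $s(Q)\le v_p(n)$. It therefore suffices to produce a single $s\in\NN$ and a threshold $Q_0\in\Psi_n(\mu)$ beyond which $s(Q)=s$ and no smaller integer lies in $I_\mu(Q)$.

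For the first stage, fix a cofinal chain $(Q_\lambda)$ in $\Psi_n(\mu)$ and, for each $\lambda$, an optimizing root $a_\lambda\in\op{Z}(Q_\lambda)$ as provided by Theorem \ref{nov}. Then $(a_\lambda,\epsilon_\mu(Q_\lambda))$ is a minimal pair of definition for $\mu_{Q_\lambda}$ over $K$, so Theorem \ref{Thm j = [K(a):ICF] min pair of defn} yields
\[
j(Q_\lambda)\;=\;[K(a_\lambda):IC_K(\mu_{Q_\lambda})].
\]
Each such index is a $p$-power dividing $n$, and the implicit constant fields $IC_K(\mu_{Q_\lambda})$ form a family of intermediate subfields inside the finite extension $K(a_\lambda)|K$; consequently $s(Q_\lambda)$ takes only finitely many values and must be eventually constant along the chain. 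Call this stable value $s$. A standard cofinality argument, together with the fact that $\mu(Q)\ge\mu(Q_0)$ forces $\mu_Q\ge\mu_{Q_0}$ for fixed-degree abstract key polynomials, produces a $Q_0\in\Psi_n(\mu)$ such that $j(Q)=p^s$ for every $Q\in\Psi_n(\mu)$ with $\mu(Q)\ge\mu(Q_0)$.

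For the second stage, use the Hasse--Schmidt expansion $Q(x+y)=\sum_{s'\ge 0}\partial_{s'}Q(x)\,y^{s'}$, evaluate at an optimizing root $a=a_Q$, and express $\partial_{s'}Q(a)$ as an elementary symmetric function of the differences $(a-a')$ where $a'$ ranges over the remaining roots of $Q$. By the first stage the set of optimizing roots of $Q$ has cardinality exactly $j(Q)=p^s$ and coincides with a single orbit of $\Gal(K(a)/IC_K(\mu_Q))$, whose action we understand because the extension $K(a)/IC_K(\mu_Q)$ is purely wild. For any $s'<p^s$, no $s'$-subset of the optimizing roots is stable under this group, and this non-stability forces the contribution of such subsets to $\partial_{s'}Q(a)$ to be strictly over-valued, yielding
\[
\frac{\mu(Q)-\mu(\partial_{s'}Q)}{s'}\;<\;\epsilon_\mu(Q),
\]
so $s'\notin I_\mu(Q)$. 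Combining this with $b(Q)\le j(Q)=p^s$ from the opening paragraph gives $b(Q)=j(Q)=p^s$ for every such $Q$, as required.

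The main obstacle is precisely the valuation-theoretic estimate in the second stage: ruling out every $s'<p^s$ as a candidate minimum of $I_\mu(Q)$ requires a delicate analysis of elementary symmetric functions in the distances $\{a-a'\}$ between conjugates inside the purely wild subextension $K(a)/IC_K(\mu_Q)$. This is exactly the setting treated by the bounds on $S_K(a)$ in Sections 4--7 of the present paper, and we expect to import those results --- in particular the characterization of $\#S_L(a)$ for $L=K(a)\sect K^r$ provided by Theorem \ref{Thm S_K(a) bound a pure} and Corollary \ref{Coro S_K(a)=1 a pure K(a)|L minimal} --- to execute the symmetric-function estimate cleanly.
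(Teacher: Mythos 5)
Your proposal has a genuine gap in the second stage, and you acknowledge it yourself: the crucial valuation-theoretic estimate ruling out every $s'<p^s$ from $I_\mu(Q)$ is never carried out. Saying that "non-stability forces the contribution of such subsets to $\partial_{s'}Q(a)$ to be strictly over-valued" is an expectation, not an argument. In particular, the results you hope to import from Sections 4--7 (Theorem \ref{Thm S_K(a) bound a pure}, Corollary \ref{Coro S_K(a)=1 a pure K(a)|L minimal}) control the \emph{cardinality} of $S_K(a)$ --- that is, how many distinct values $v(a-a')$ occur --- but they do not on their own yield the fine control of valuations of elementary symmetric functions in the differences $a-a'$ that you would need to show $\dfrac{\mu(Q)-\mu(\partial_{s'}Q)}{s'}<\epsilon_\mu(Q)$ for all $s'<p^s$. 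Cancellation among terms of equal value is exactly what such a symmetric-function estimate would have to rule out, and nothing in your outline addresses it. Without stage two you only obtain $b(Q)\le j(Q)=p^s$, not equality.

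The paper's actual proof sidesteps the entire symmetric-function analysis by invoking [\ref{Nart Novacoski min limit key pols}, Corollary~3.4], which states directly that $I_\mu(Q)=\{p^s\}$ for all $Q\in\Psi_n$ with $\mu(Q)\ge\mu(Q_0)$ --- i.e.\ that $I_\mu(Q)$ is eventually a \emph{singleton} along $\Psi_n(\mu)$. Once $I_\mu(Q)$ is a singleton, the equality $b(Q)=j(Q)=p^s$ is immediate from Proposition~\ref{Mainporop} together with the definition $b(Q)=\min I_\mu(Q)$. Your first stage essentially reproves the weaker statement that $j(Q)=p^s$ eventually (using the monotonicity of $j$ from the Corollary after Proposition~\ref{Mainporop} and finiteness of possible values), but the substance of the Proposition is precisely the singleton claim, and that is the content of the external result you should be citing.
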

\begin{proof}
	It follows from [\ref{Nart Novacoski min limit key pols}, Corollary 3.4] that there exists $Q_0\in \Psi_n$ and $s\in \NN$ such that
	\[
	I(Q)=I(Q_0)=\{p^s\}\mbox{ for every }Q\in \Psi_n\mbox{ with }\mu(Q)\geq \mu(Q_0).
	\]
	Hence, our result follows from Proposition \ref{Mainporop}.
\end{proof}

Take an element $a\in \overline K\setminus K$ and consider the valuation (with non-trivial support) defined by
\[
v_a(f)=v(f(a))\mbox{ for every }f\in K[x].
\]
Fix an Okutsu sequence
\[  A_0 (= \{a\}), A_1, \dotsc , A_n,  \]
for $a$. It follows from [\ref{Nart Novacoski Depth of extns of valns}, Theorem 1.4] that for a given $n\in \NN$ the set $\Psi_n(v_a)$ is non-empty if and only if $n=\deg_K(A_i)$ for some $i$. Moreover, the set $A_i$ is a singleton if and only if $\Psi_n(v_a)$ has a maximum.

An interesting consequence of the above result is the following.
\begin{Proposition}
	Suppose that $a\in\overline K\setminus K$ is such that for every $n\in\NN$, $n\leq \deg_K(a)$, the set $\Psi_n(v_a)$ does not have maximum. Then there exists a \emph{complete sequence of abstract key polynomials} $\textbf{Q}$ for $\theta$ such that
	\[
	b(Q)=j(Q)\in \{p^s\mid s\in \NN\}\mbox{ for every }Q\in \textbf{Q}.
	\]  
\end{Proposition}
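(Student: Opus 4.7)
The plan is to apply Proposition \ref{Propostablep} at each admissible degree and then concatenate the resulting cofinal families into a single complete sequence. First, fix an Okutsu sequence $A_0 = \{a\}, A_1, \ldots, A_n$ for $a$ with degrees $d_0 > d_1 > \cdots > d_n = 1$. By [\ref{Nart Novacoski Depth of extns of valns}, Theorem 1.4] recalled in the paragraph preceding the proposition, $\Psi_m(v_a)$ is non-empty precisely for $m \in \{d_0, \ldots, d_n\}$, and $\Psi_{d_i}(v_a)$ has a maximum if and only if $A_i$ is a singleton. The standing hypothesis then forces every $A_i$ to be infinite and every $\Psi_{d_i}(v_a)$ to lack a maximum.

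Next, for each $i$ apply Proposition \ref{Propostablep} to $\mu = v_a$ and $n = d_i$: this produces some $Q_0^{(i)} \in \Psi_{d_i}(v_a)$ and some $s_i \in \NN$ with
\[
b(Q) = j(Q) = p^{s_i} \quad \text{for every } Q \in \Psi_{d_i}(v_a) \text{ with } v_a(Q) \geq v_a(Q_0^{(i)}).
\]
Inside this stable tail, choose a well-ordered cofinal family $\mathbf{Q}^{(i)} = \{Q_\nu^{(i)}\}_{\nu < \lambda_i}$, meaning $\{\mu_{Q_\nu^{(i)}}\}_{\nu < \lambda_i}$ is strictly increasing and cofinal in $\{\mu_Q \mid Q \in \Psi_{d_i}(v_a)\}$; such a family exists because the set $\{\mu_Q \mid Q \in \Psi_{d_i}(v_a)\}$ is totally ordered (as it is contained in $\{\nu \mid \nu \leq v_a\}$) and has no maximum.

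Finally, define $\mathbf{Q}$ by concatenating $\mathbf{Q}^{(n)}, \mathbf{Q}^{(n-1)}, \ldots, \mathbf{Q}^{(0)}$ in order of increasing degree. Each $Q \in \mathbf{Q}^{(i)}$ satisfies $b(Q) = j(Q) = p^{s_i} \in \{p^s \mid s \in \NN\}$ by the choice above, so the desired identity holds throughout $\mathbf{Q}$. By construction, the passage from $\mathbf{Q}^{(i)}$ to $\mathbf{Q}^{(i-1)}$ realizes a limit augmentation in the Mac Lane--Vaqui\'{e} sense (this is exactly where the Okutsu stratum $A_i$ is infinite and forces a limit step), so the concatenated family witnesses $v_a$ as the limit of its truncations and is therefore complete.

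The main obstacle is verifying the completeness of the concatenated family, i.e., that $\sup_{Q \in \mathbf{Q}} \mu_Q = v_a$ and that no degree stratum is skipped. This relies on the correspondence between Okutsu levels and augmentation steps from [\ref{Nart Novacoski Depth of extns of valns}]: exhausting every $\Psi_{d_i}(v_a)$ by a cofinal family at each of the finitely many admissible degrees guarantees that the limit of the $\mu_{Q}$ agrees with $v_a$ on polynomials of every degree $< \deg_K(a)$, and the stratum at degree $\deg_K(a)$ itself accounts for the final limit step. Once this is established, the bookkeeping between $b(Q)$ and $j(Q)$ is immediate from Proposition \ref{Mainporop} applied at each level.
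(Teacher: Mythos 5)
The paper states this proposition without proof (introducing it only as "an interesting consequence of the above result"), so the task is to assess whether your reconstruction is sound. Your overall strategy --- stratify by the admissible degrees $d_i$, apply Proposition \ref{Propostablep} at each level to obtain a stable tail on which $b(Q)=j(Q)=p^{s_i}$, choose a cofinal family in each tail, and concatenate --- is exactly the right way to derive the statement from the preceding results.

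There is one place where the argument as written is not quite correct. You claim that the hypothesis "forces every $A_i$ to be infinite," deducing this from the equivalence "$A_i$ is a singleton if and only if $\Psi_{d_i}(v_a)$ has a maximum." But by the very definition of an Okutsu sequence we always have $A_0 = \{a\}$, a singleton; this deduction is therefore false for $i=0$, and indeed the cited correspondence can only sensibly be invoked for $i\ge 1$. What actually happens at degree $d_0=\deg_K(a)$ is that the hypothesis itself already tells you $\Psi_{d_0}(v_a)$ has no maximum (implicitly excluding the minimal polynomial of $a$, which lies in $\operatorname{supp}(v_a)$); you do not need, and cannot get, this information by passing through the Okutsu level $A_0$. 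The fix is small: justify the stable tail at degrees $d_1,\ldots,d_n$ via the Okutsu correspondence and $A_i$ infinite, and at degree $d_0$ directly from the hypothesis. With that correction, Proposition \ref{Propostablep} applies at every admissible degree and the rest of the argument goes through as you describe. Your final paragraph rightly flags that the completeness of the concatenated family is the remaining thing to check; that this is delivered by choosing cofinal families at each admissible degree is the content one wants from [\ref{Nart Novacoski Depth of extns of valns}] (and \ref{Nova Spiva key pol pseudo convergent}), and stating it at that level of detail is reasonable here.
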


For a given $n\in\NN$, the condition that $\{\mu_Q\mid Q\in \Psi_n(\mu)\}$ has or not a last element is very well understood. For instance, it follows from [\ref{Vaquie key pols}] that if $(K,v)$ is henselian, then the element $\theta$ is defectless if and only if $\{\mu_Q\mid Q\in \Psi_n(\mu)\}$ ($\mu=v_a$) has a maximum for every $n\in\NN$, $n<\deg_K(a)$. On the other hand, if an element has the same degree as its defect, i.e., for $a\in \overline{K}$ and $L:=K(a)$ we have
\[
d(L|K,v)=[L:K],
\]
then every set $\{\mu_Q\mid Q\in \Psi_n(\mu)\}$ does not have a maximum.

\pars In particular we have the following corollary.
\begin{Corollary}
	For $a\in \overline{K}$ and $L:=K(a)$, assume that
	\[
	d(L|K,v)=[L:K].
	\]
	Then $a$ admits a \emph{complete sequence of key polynomials $\textbf{Q}$} such that for every $Q\in \textbf{Q}$ we have
	\[
	b(Q)=j(Q)\in \{p^s\mid s\in \NN\}.
	\]
\end{Corollary}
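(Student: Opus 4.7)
The plan is to derive this corollary as an immediate consequence of the Proposition stated just before it (the one about complete sequences when every $\Psi_n(v_a)$ lacks a maximum). The only thing to check is that the hypothesis $d(L|K,v)=[L:K]$ forces $\{\mu_Q\mid Q\in\Psi_n(v_a)\}$ to have no maximum for every relevant $n$; once that is in hand, the conclusion is obtained for free.

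First, I would verify the no-maximum hypothesis. By Vaqui\'{e}'s theory of key polynomials (the result cited via [\ref{Vaquie key pols}] in the paragraph preceding the corollary), the existence of a maximum in $\{\mu_Q\mid Q\in\Psi_n(v_a)\}$ at some degree $n<\deg_K(a)$ would correspond to a Mac Lane \emph{ordinary} augmentation step in the construction of $v_a$; such steps do not contribute to the defect. Since the defect $d(L|K,v)$ is stipulated to equal $[L:K]$, every augmentation in any Mac Lane--Vaqui\'{e} chain for $v_a$ must be a \emph{limit} step, and so $\{\mu_Q\mid Q\in\Psi_n(v_a)\}$ cannot have a maximum for any $n\leq\deg_K(a)$ for which $\Psi_n(v_a)$ is non-empty. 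This is exactly the Vaqui\'{e}-type statement quoted in the paragraph preceding the corollary.

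With the hypothesis verified, I would simply apply the previously stated Proposition: for each allowed $n$, we obtain some $Q_{0,n}\in\Psi_n(v_a)$ and some $s_n\in\NN$ such that
\[
b(Q)=j(Q)=p^{s_n}\quad\text{for every }Q\in\Psi_n(v_a)\text{ with }\mu_Q\geq \mu_{Q_{0,n}}.
\]
This in turn comes from Proposition \ref{Propostablep}, whose proof leverages [\ref{Nart Novacoski min limit key pols}, Corollary 3.4] together with Proposition \ref{Mainporop}. A complete sequence $\mathbf{Q}$ for $v_a$ is then assembled degree-by-degree: at each allowed degree $n$ one chooses a cofinal sequence in $\{\mu_Q\mid Q\in\Psi_n(v_a)\}$ lying beyond $\mu_{Q_{0,n}}$, and concatenates these in order of increasing degree along the Okutsu/Mac Lane--Vaqui\'{e} structure. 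Each $Q$ in the resulting sequence satisfies $b(Q)=j(Q)=p^{s_n}$ for the $s_n$ attached to its degree.

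The substantive content is therefore not in the Corollary itself but in the Vaqui\'{e}-type equivalence that links ``defect equal to degree'' with ``every $\Psi_n(v_a)$ lacks a maximum'', and in the stabilization Proposition \ref{Propostablep}. The only mild obstacle is to ensure that the chosen concatenation really forms a complete sequence in the sense of Mac Lane--Vaqui\'{e}; this is guaranteed because cofinality of the chosen $Q$'s within each $\Psi_n(v_a)$, combined with the absence of maxima at every relevant degree, forces $\sup_Q \mu_Q = v_a$. No additional computation is required.
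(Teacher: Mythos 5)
Your proposal follows exactly the path the paper intends: observe that $d(L|K,v)=[L:K]$ forces $\{\mu_Q\mid Q\in\Psi_n(v_a)\}$ to lack a maximum at every relevant degree (the Vaqui\'e-type statement asserted in the paragraph preceding the Corollary), and then apply the Proposition immediately before the Corollary to produce the complete sequence with $b(Q)=j(Q)\in\{p^s\mid s\in\NN\}$. This is correct and is essentially the same argument; the extra detail you add on Mac Lane--Vaqui\'e limit versus ordinary augmentations is just a fuller unpacking of the one-sentence justification the paper gives for the no-maximum step.
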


\subsection{Graded algebra interpretation}
A way of interpreting the $j$-invariant is the following. For a valuation $\mu$ on $K[x]$, fix an extension $\overline \mu$ of $\mu$ to $\overline K[x]$, an abstract key polynomial $Q$ for $\mu$ such that $\mu=\mu_Q$ and an optimizing root $a$ of $Q$. We consider the graded algebra ${\rm gr}_{\overline \mu}(\overline K[x])$ as it has been done in [\ref{Vaquie key pols}] and [\ref{Dutta invariant of valn tr extns and connection with key pols}], for instance. As shown in \ref{Novacoski MVKP}, and observed in \ref{Decaup Spiva Mahboub ABKP comparison MVKP}, we have
\[
{\rm gr}_{\overline \mu}(\overline K[x])=\mathcal G[Y]
\]
where $\mathcal G$ is the abelian group generated by $\{{\rm in}_{\overline \mu} c\mid c\in \overline K^*\}$ and $Y={\rm in}_{\overline \mu}(x-a)$. Hence, it makes sense to talk about $\deg_{Y}\left({\rm in}_{\overline{\mu}}(f)\right)$ for any $f\in K[x]\setminus {\rm supp}(\mu)$.

Let $a_1,\ldots, a_n\in\overline K$ be all the roots (not necessarily distinct) of $Q$. Then
\begin{equation}\label{decompositonoff}
	Q(x)=\prod_{i=1}^n (x-a_i)=\prod_{i=1}^r (x-a_i)\prod_{i=r+1}^n (x-a_i)
\end{equation}
with
\[
\overline\mu(x-a_1)=\overline\mu(x-a_r)=\epsilon(Q)>\overline\mu(x-a_\ell)\mbox{ for every }\ell>r.
\]
\begin{Proposition}
	$j(Q)=\deg_Y\left({\rm in}_{\overline \mu}(Q)\right)=r$.
\end{Proposition}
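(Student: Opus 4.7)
The plan is to establish the two equalities separately, both resting on a single observation: the condition $\overline\mu(x-a_i)=\epsilon(Q)$ characterising the first $r$ roots is equivalent to $v(a-a_i)\geq\epsilon(Q)$. First I would verify this equivalence using the triangle inequality on the decomposition $x-a_i=(x-a)+(a-a_i)$. Since $a$ is an optimizing root, $\overline\mu(x-a)=\epsilon(Q)$ is the maximum of $\overline\mu(x-a')$ over roots $a'$ of $Q$. If $v(a-a_i)\geq \epsilon(Q)$, then $\overline\mu(x-a_i)\geq\min\{\epsilon(Q),v(a-a_i)\}=\epsilon(Q)$, forcing equality. Conversely, if $\overline\mu(x-a_i)=\epsilon(Q)=\overline\mu(x-a)$, then $\overline\mu(a-a_i)\geq\epsilon(Q)$, i.e.\ $v(a-a_i)\geq\epsilon(Q)$.

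Once this equivalence is in hand, the equality $j(Q)=r$ is immediate from the definition of the $j$-invariant: $j(Q)=j_{\mu_Q}(Q)=\#\{a_i\mid v(a-a_i)\geq\epsilon(Q)\}=\#\{a_i\mid\overline\mu(x-a_i)=\epsilon(Q)\}=r$, where the pair of definition for $\mu_Q=\mu$ is $(a,\epsilon(Q))$.

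For the second equality $\deg_Y(\mathrm{in}_{\overline\mu}(Q))=r$, I would use the multiplicativity of initial forms in the graded algebra $\mathrm{gr}_{\overline\mu}(\overline K[x])=\mathcal G[Y]$, which is a domain. Thus $\mathrm{in}_{\overline\mu}(Q)=\prod_{i=1}^n\mathrm{in}_{\overline\mu}(x-a_i)$, and it suffices to compute $\deg_Y\mathrm{in}_{\overline\mu}(x-a_i)$ for each $i$. For $i\leq r$ (optimizing root) we have $\overline\mu(x-a_i)=\overline\mu(x-a)=\epsilon(Q)$, and writing $x-a_i=(x-a)+(a-a_i)$ in the slice $\mathcal P_{\epsilon(Q)}/\mathcal P_{\epsilon(Q)}^+$ gives $\mathrm{in}_{\overline\mu}(x-a_i)=Y$ when $v(a-a_i)>\epsilon(Q)$ and $\mathrm{in}_{\overline\mu}(x-a_i)=Y+\mathrm{in}_{\overline\mu}(a-a_i)$ (with $\mathrm{in}_{\overline\mu}(a-a_i)\in\mathcal G$) when $v(a-a_i)=\epsilon(Q)$; in both cases the $Y$-degree equals $1$. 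For $i>r$ we have $v(a-a_i)<\epsilon(Q)=\overline\mu(x-a)$, so the $(x-a)$ summand lies in $\mathcal P_{v(a-a_i)}^+$ and $\mathrm{in}_{\overline\mu}(x-a_i)=\mathrm{in}_{\overline\mu}(a-a_i)\in\mathcal G$ has $Y$-degree $0$. Multiplying the $Y$-degrees yields $\deg_Y\mathrm{in}_{\overline\mu}(Q)=r$.

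I do not anticipate a serious obstacle; the only delicate point is justifying that $\mathrm{in}_{\overline\mu}$ distributes across the factorisation of $Q$, which requires the graded algebra to be a domain, a fact already recorded via the identification $\mathrm{gr}_{\overline\mu}(\overline K[x])=\mathcal G[Y]$. All other steps are immediate triangle-inequality manipulations combined with the definition of the initial form as an element of the appropriate slice of the graded algebra.
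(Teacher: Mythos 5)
Your proof is correct and takes essentially the same route as the paper: decompose $Q$ into linear factors, use that $\mathrm{in}_{\overline\mu}$ is multiplicative in the domain $\mathcal G[Y]$, and observe that each optimizing root $a_i$ ($i\leq r$) contributes $Y$-degree $1$ while the others contribute degree $0$. The paper states these facts tersely; you supply the triangle-inequality verification that the condition $\overline\mu(x-a_i)=\epsilon(Q)$ is equivalent to $v(a-a_i)\geq\epsilon(Q)$, which is the right way to make the equality $j(Q)=r$ explicit rather than implicit in the setup.
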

\begin{proof}
	It follows directly from \eqref{decompositonoff}. In that case
	\[
	{\rm in}_{\overline \mu}(x-a_i)\in \mathcal G\mbox{ if and only if }i> r.
	\]
	For $i\leq r$ we have
	\[
	\deg_Y\left({\rm in}_{\overline \mu}(x-a_i)\right)=1.
	\]
\end{proof}

\begin{Remark}
	The above proposition also follows from [\ref{Dutta invariant of valn tr extns and connection with key pols}, Proposition 3.5].
\end{Remark}

\subsection{Newton polygon interpretation}
If we consider the Newton polygon $\Delta_f$ of the following points
\[
\{(b,\partial_bf)\mid 0\leq b\leq \deg(f)\},
\]
then we clearly obtain that $-\epsilon(f)$ is the slope of the first side of this polygon. Also, one can see that the length of this side is $\max I(f)$. Hence, if $I(Q)$ is a singleton (this will be the case for key polynomials \enquote{far enough in a $\Psi_n(\mu)$}, if $\Psi_n(\mu)$ does not have maximum), then $j(Q)$ is the length of the first side of $\Delta_Q$.


\begin{thebibliography}{1000000000}
		\bibitem{KASKK} K. Aghigh and S. K. Khanduja, On chains associated with elements algebraic over a henselian valued field, Algebra Colloq., 12(4) (2005), 607--616. \label{KA SKK chains associated with elts henselian}	
		\bibitem{KA-SKK-02} K. Aghigh and S. K. Khanduja, On the main invariant of elements algebraic over a henselian valued field, Proceedings of the Edinburgh Mathematical Society (2002) 45, 219--227. \label{KA SKK main invaraint of elts alg over henselian field}
		\bibitem{AP88} V. Alexandru and A. Zaharescu, Sur une classe de prolongements \`{a} $K(x)$ d'une valuation sur une corp $K$, Rev. Roumaine Math. Pures Appl., 5 (1988), 393--400. \label{AP sur une classe}  	
		\bibitem{APZ1} V. Alexandru, N. Popescu and A. Zaharescu, A theorem of characterization of residual transcendental extensions of a valuation, J. Math. Kyoto University, 28 (1988), 579--592. \label{APZ characterization of residual trans extns}
		\bibitem{APZ2} V. Alexandru, N. Popescu and A. Zaharescu, Minimal pairs of definition of a residual transcendental extension of a valuation, J. Math. Kyoto University, 30 (1990), 207--225. \label{APZ2 minimal pairs}
		\bibitem{SDC-FVK-AR-25} S. D. Cutkosky, F.-V. Kuhlmann and A. Rzepka, On the computation of Kahler differentials and characterizations of Galois extensions with independent defect, Math. Nachrichten, 298 no. 5 (2025), 1549--1577. \label{Cutkosky-FVK-AR Computation of Kahler differentials ind defect} 
		\bibitem{DMS 18} J. Decaup, W. Mahboub and M. Spivakovsky, Abstract key polynomials and comparison theorems with the key polynomials of Mac Lane--Vaqui\'{e}, Illinois Journal of Mathematics 62(1--4), 253--270 (2018). \label{Decaup Spiva Mahboub ABKP comparison MVKP} 
		\bibitem{Du24} A. Dutta, An invariant of valuation transcendental extensions and its connection with key polynomials, Journal of Algebra, 649 (2024), 133--168. \label{Dutta invariant of valn tr extns and connection with key pols}
		\bibitem{Du21} A. Dutta, Minimal pairs, minimal fields and implicit constant fields, Journal of Algebra, 588 (2021), 479--514. \label{Dutta min fields implicit const fields}
		\bibitem{Du21} A. Dutta, Minimal pairs, inertia degrees, ramification degrees and implicit constant fields, Communications in Algebra, 50(11) (2022), 4964--4974. \label{Dutta min pairs inertia ram deg impl const fields}
		\bibitem{Du21} A. Dutta, On the implicit constant fields and key polynomials for valuation algebraic extensions, Journal of Commutative Algebra, 14(4) (2022), 515--525. \label{Dutta imp const fields key pols valn alg extns}
		\bibitem{AD-22} A. Dutta, On the non-uniqueness of maximal purely wild extensions, Communications in Algebra, 50(3) (2022), 1118-1139. \label{Dutta non-uniqueness max purely wild extns} 
		\bibitem{AD-RG-25} A. Dutta and R. Ghosh, On defectless unibranched simple extensions, complete distinguished chains and certain stability results, in preparation. \label{Dutta Ghosh defectless unibranched extn complete distinguished chains}
		\bibitem{DuKu} A. Dutta and F.-V. Kuhlmann, Tame key polynomials, Journal of Algebra, 629 (2023), 162--190. \label{Dutta FVK tamekeypols}
		\bibitem{EnPr05} A.J. Engler and A. Prestel, Valued Fields, Springer Monographs in Mathematics, Springer-Verlag, Berlin (2005). \label{Engler Prestel book}
		\bibitem{Kap-42} I. Kaplansky, Maximal fields with valuations I, Duke Math. Journal 9 (1942), 303--321, \label{Kaplansky}
		\bibitem{PCK-FVK-AR-20} P. Cubides Kovacsics, F.-V. Kuhlmann and A. Rzepka, On valuation independence and defectless extensions of valued fields, J. Algebra 555 (2020), 69--95. \label{FVK-AR-PCK valn independence and defectless extns} 
		\bibitem{K2} F.-V. Kuhlmann, The defect, in Commutative Algebra - Noetherian and non-Noetherian perspectives. Marco Fontana, Salah-Eddine Kabbaj, Bruce Olberding and Irena Swanson eds. Springer, 2011. \label{Kuh defect}	
		\bibitem{K1} F.-V. Kuhlmann,   Valuation theoretic and model theoretic aspects of local uniformization, in Resolution of Singularities -
		A Research Textbook in Tribute to Oscar Zariski, H. Hauser, J. Lipman, F. Oort, A. Quiros (es.), Progress in Math. 181, Birkh\"auser (2000), 4559-4600. \label{Kuh vln model}
		\bibitem{FVK-04} F.-V. Kuhlmann, Value groups, residue fields and bad places of rational function fields, Trans. Amer. Math. Soc., 356 (2004), 4559--4600. \label{Kuh value groups residue fields rational fn fields}
       	\bibitem{FVK-AR-23} F.-V. Kuhlmann and A. Rzepka, The valuation theory of deeply ramified fields and its connection with defect extensions, Trans. Amer. Math. Soc., 376 (2023), 2693--2738. \label{FVK AR Valn theory deeply ramified}
       	\bibitem{FVK-AR-25} F.-V. Kuhlmann and A. Rzepka, Topics in higher ramification theory, \url{https://arxiv.org/abs/2503.13157} (2025). \label{FVK AR Topics in higher ramification theory}
        \bibitem{EN24} E. Nart, Okutsu sequences in Henselian valued fields, Polynesian J. Math. 1 (2024), no 4, 1--27. \label{Nart Okutsu sequences henselian} 
        \bibitem{EN-JN-25} E. Nart and J. Novacoski, Depth of extensions of valuations, \url{https://arxiv.org/abs/2503.00850}. \label{Nart Novacoski Depth of extns of valns}
        \bibitem{EN-JN-25} E. Nart and J. Novacoski, Depth of Artin-Schreier defect towers, \url{https://arxiv.org/abs/2503.18827} \label{Nart Novacoski depth of A-S defect towers}
        \bibitem{NNMLKP} E. Nart, J. Novacoski, Minimal limit key polynomials, J. Lond. Math. Soc. \textbf{111} no. 5 (2025), Paper No. e70162. \label{Nart Novacoski min limit key pols}
        \bibitem{JN-25} J. Novacoski, On the distances of an element to its conjugates, \url{https://arxiv.org/abs/2505.15258}. \label{Novacoski distances of elts to conjugates}
		\bibitem{JN-19} J. Novacoski, Key polynomials and minimal pairs, Journal of Algebra, 523 (2019), 1--14. \label{Novacoski key poly and min pairs}
		\bibitem{JN-21} J. Novacoski, On Mac Lane--Vaqui\'{e} key polynomials, Journal of Pure and Applied Algebra, 225 (2021), 106644. \label{Novacoski MVKP}
		\bibitem{JN-MS-18}	J. Novacoski and M. Spivakovsky, Key polynomials and pseudo-convergent sequences, Journal of Algebra, 495 (2018), 199--219. \label{Nova Spiva key pol pseudo convergent}
		\bibitem{MV07} M. Vaqui\'{e}, Extension d'une valuation, Trans. Amer. Math. Soc., 359 no. 7 (2007), 3439--3481. \label{Vaquie key pols} 
	\end{thebibliography}
	\end{document}